\documentclass{article}
\usepackage{amsfonts,latexsym,hyperref,amssymb,amsmath,amsthm} 
\frenchspacing
\sloppy

%
%%%%%%%%%%%%%%%%
% New Theorems %
%%%%%%%%%%%%%%%%
\newcounter{satznum}
\newtheorem{theorem}{Theorem}[satznum]
\newtheorem{definition}[theorem]{Definition}
\newtheorem{lemma}[theorem]{Lemma}
\newtheorem{corollary}[theorem]{Corollary}

\newtheorem{prop}[theorem]{Proposition}
\newtheorem{remark}[theorem]{Remark}
\newtheorem{example}[theorem]{Example}
%

%
%%%%%%%%%%%%%%%%%%%%
% New Environments %
%%%%%%%%%%%%%%%%%%%%
\newenvironment{acknowledgement}
 {\begin{trivlist}\item[]{\bf Acknowledgement.}}
 {\end{trivlist}}
%\newenvironment{remark}
% {\begin{trivlist}\item[]{\bf Remark.}}
%{\end{trivlist}}

%\newenvironment{proof}
 %{\begin{trivlist}\item[]{\bf Proof.}}
 %{\end{trivlist}}
%
\newenvironment{satz-liste}
{\begin{list}{}{\setlength{\itemindent}{-0.2cm} \setlength{\itemsep}{0cm} \setlength{\topsep}{0cm} \setlength{\leftmargin}{0.5cm} \setlength{\labelwidth}{0.15cm} \setlength{\labelsep}{0.1cm} \setlength{\listparindent}{0cm} \setlength{\parsep}{0cm}} }
{\end{list}}

\newenvironment{text-liste}
{\begin{list}{}{\setlength{\itemindent}{-0.2cm} \setlength{\itemsep}{0.04cm} \setlength{\topsep}{0.1cm} \setlength{\leftmargin}{0.5cm} \setlength{\labelwidth}{0.2cm} \setlength{\labelsep}{0.1cm} \setlength{\listparindent}{0cm} \setlength{\parsep}{0.1cm}} }
{\end{list}}

\newenvironment{text-liste2}
{\begin{list}{}{\setlength{\itemindent}{-0.2cm} \setlength{\itemsep}{0.04cm} \setlength{\topsep}{0.1cm} \setlength{\leftmargin}{0.6cm} \setlength{\labelwidth}{0.2cm} \setlength{\labelsep}{0.1cm} \setlength{\listparindent}{0cm} \setlength{\parsep}{0.1cm}} }
{\end{list}}

%%%%%%%%%%%%%%%%%%%%%%%%%%%%%%%
% symbols for sets of numbers %
%%%%%%%%%%%%%%%%%%%%%%%%%%%%%%%
{\makeatletter
 % complex numbers
\gdef\nz{{\mathbb N}} % positive integers
 % rational numbers
\gdef\R{{\mathbb R}} % real numbers

\gdef\om{{\omega}}
\gdef\Om{{\Omega}}

\gdef\N{{\mathbb N}}

}

\begin{document}
   \section*{Countable Random Sets: Uniqueness in Law and Constructiveness}
   {\sc Philip Herriger}\footnote{Mathematisches Institut, Eberhard Karls Universit\"at 
   T\"ubingen, Auf der Morgenstelle 10, 72076 T\"ubingen, Germany, E-mail address: 
   philip.herriger@uni-tuebingen.de}
\begin{abstract}
  The first part of this article deals with theorems on uniqueness in law for $\sigma$-finite and constructive countable random sets, which in contrast to the usual assumptions may have points of accumulation.  We discuss and compare two approaches on uniqueness theorems: First, the study of generators for $\sigma$-fields used in this context and, secondly, the analysis of hitting functions.
  
  The last section of this paper deals with the notion of constructiveness. We will prove a measurable selection theorem and a decomposition theorem for constructive countable random sets, and study constructive countable random sets with independent increments. 
   \vspace{2mm}

   \noindent Keywords: Constructive countability; constructiveness; countable random sets; decomposition; generators; hitting functions; independent increments; measurable selections; point processes; Poisson processes; R\'{e}nyi; uniqueness in law

   \vspace{2mm}

   \noindent 2010 Mathematics Subject Classification:
            Primary 60G55; 60D05   Secondary 28B20  
\end{abstract}
%

%%%%%%%%%%%%%%%%%%%%%%%%%%%%%%%%%%%%%%%%%%%%%%%%%%%%%%%%%%%%%%%%%%%%%%%%%%%%%%%%%%%%%%%%%%%%%%%%
%%%%%%%%%%%%%%%%%%%%%%%%%%%%%%%%%%%%%%%%%%%%%%%%%%%%%%%%%%%%%%%%%%%%%%%%%%%%%%%%%%%%%%%%%%%%%%%%
%%%%%%%%%%%%%%%%%%%%%%%%%%%%%%%%%%%%%%%%%%%%%%%%%%%%%%%%%%%%%%%%%%%%%%%%%%%%%%%%%%%%%%%%%%%%%%%%
%%%%%%%%%%%%%%%%%%%%%%%%%%%%%%%%%%%%%%%%%%%%%%%%%%%%%%%%%%%%%%%%%%%%%%%%%%%%%%%%%%%%%%%%%%%%%%%%

\subsection{Introduction} \label{intro}
\setcounter{theorem}{0}
	 Throughout this paper let $(S,\mathcal{S})$ be a measurable space, called {\it state space}, and let $(\Omega,\mathcal{F},P)$ denote a basic probability space. Following 
	 Kingman's approach \cite{kingman}, we define $C(S)$ to be the set of all countable (denumerable or finite) subsets of $S$ and denote  by $N_{A}$ the map $C(S)\to \N_{0}\cup\{\infty\}$, 
	 $M\mapsto |A\cap M|$ for all $A\subseteq S$.  Then a {\it countable random set} (cr-set) is a 
	 random variable $\pi: (\Omega,\mathcal{F})\to (C(S),\mathcal{C}(\mathcal{S}))$, 
	 in which $\mathcal{C}(\mathcal{S})$ is the smallest $\sigma$-field making $N_{A}$ for all $A\in\mathcal{S}$ measurable.
	 As in \cite{kingman}, the diagonal $\Delta:=\{(x,x)\,|\, x\in S\}$ of $S\times S$  is most of the time assumed to be measurable with respect to $\mathcal{S}\otimes\mathcal{S}$. We will enlarge upon this condition in Section 2, where we collect several tools used throughout the article.
	 
	 By definition of $\mathcal{C}(\mathcal{S})$, it follows immediately that for any cr-set
	 $\pi$ the map $\Om\times\mathcal{S}\to \N_{0}\cup\{\infty\}$, 
	 $(\om,A)\mapsto N_{A}(\pi(\om))$ is a kernel. Thus cr-sets are related to the 
	 theory of random measures. If $\Delta\in \mathcal{S}\otimes\mathcal{S}$ and if $\mu$ 
	 is a $\sigma$-finite measure on $(S,\mathcal{S})$ with values in $\N_{0}\cup\{\infty\}$, then there  exist Dirac measures $\delta_{x_{n}}$ with $x_{n}\in S$ and $\alpha_{n}\in\nz_{0}$ such that 
	 $\mu=\sum_{n\in\N}\alpha_{n}\delta_{x_{n}}$. Therefore any simple point process on 
	 $(S,\mathcal{S})$ can be regarded as a cr-set. For the definition of a simple point process or a kernel, see \cite{kallenberg}. However, $\mathcal{C}(\mathcal{S})$ can also be identified as the ``hit or miss'' $\sigma$-field on $C(S)$, a concept relating to the theory of random sets.
	 	
	 If $\pi$ is a cr-set, we denote its law on $(C(S),\mathcal{C}(\mathcal{S}))$ by $P_{\pi}$. The measure $\mu(A):=E(N_{A}(\pi))$ on $(S,\mathcal{S})$ is called 
	 the {\it intensity} of $\pi$. A map $\tau:\Om\rightarrow C(S)$ is said to be {\it 
	 finite} if $|\tau(\om)|<\infty$ for all $\om\in\Om$. Consequently, $\tau$ is called 
	 {\it $\sigma$-finite on}  $\mathcal{E}\subseteq \mathcal{S}$ if there is a covering 
	 $A_{n}\in\mathcal{E}$ with $S=\bigcup_{n\in\N}A_{n}$ such that $\tau\cap A_{n}$ is finite for all $n\in\N$. 
	 Note that there exists always a finite ($\sigma$-finite) version of a cr-set $\pi$ if the 
	 intensity measure of $\pi$ is finite ($\sigma$-finite). 

         Theorems on uniqueness in law for simple point processes usually assume a polish state space $(S,\mathcal{S})$ and finiteness on bounded sets (for a fixed metric), i.e. $|\pi(\om)\cap A|<\infty$ for all $\om\in\Om$ and all bounded sets $A\subseteq S$. However, it is possible to achieve good results by only working with $\sigma$-finiteness on arbitrary measurable spaces, as shown in the following theorem.

\begin{theorem} \label{sigma-theorem}
Let $\mathcal{E}\subseteq \mathcal{P}(S)$ be a $\cap$-stable generator of $\mathcal{S}$ and let $\pi_{1},\pi_{2}:\Om\to C(S)$ be two cr-sets. If $\pi_{1}$ and $\pi_{2}$ are $\sigma$-finite on $\mathcal{E}$ and satisfy
\begin{equation}\label{eq-theosigma} P_{\pi_{1}}(N_{A_{1}}=k_{1},\ldots, N_{A_{n}}=k_{n})=P_{\pi_{2}}(N_{A_{1}}=k_{1},\ldots, N_{A_{n}}=k_{n})\end{equation}
for all $A_{1},\ldots,A_{n}\in\mathcal{E}$, $n\in \N$ and $k_{1},\ldots,k_{n}\in\N_{0}\cup\{\infty\}$, then $P_{\pi_{1}}=P_{\pi_{2}}$.
\end{theorem}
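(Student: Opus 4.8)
The strategy is a Dynkin system (monotone class) argument: produce a $\cap$-stable generator of $\mathcal{C}(\mathcal{S})$ on which $P_{\pi_{1}}$ and $P_{\pi_{2}}$ agree. The friction is that \eqref{eq-theosigma} only controls the variables $N_{A}$ for $A\in\mathcal{E}$, whereas any generation argument for $\mathcal{C}(\mathcal{S})$ needs complements of counting variables, hence sets outside $\mathcal{E}$; moreover subtracting counting variables, $N_{B\setminus A}=N_{B}-N_{A}$, is only legitimate where they are finite, which they need not be on all of $C(S)$. Both obstructions are overcome by localising. First I would use the $\cap$-stability of $\mathcal{E}$: if $S=\bigcup_{n}E_{n}$ and $S=\bigcup_{m}F_{m}$ witness $\sigma$-finiteness of $\pi_{1}$ and $\pi_{2}$ respectively, then $\{E_{n}\cap F_{m}\}_{n,m}$ is a countable cover of $S$ by members of $\mathcal{E}$ on which both $\pi_{1}\cap\,\cdot\,$ and $\pi_{2}\cap\,\cdot\,$ are finite; relabel it $(G_{k})_{k\in\N}$ and put $H_{k}:=G_{1}\cup\dots\cup G_{k}$, so that $H_{k}\uparrow S$, $H_{k}\in\mathcal{S}$, and $N_{H_{k}}(\pi_{i})\le\sum_{j\le k}N_{G_{j}}(\pi_{i})<\infty$ everywhere, for $i=1,2$.

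Fix $k$ and let $r=r_{H_{k}}\colon C(S)\to C(S)$, $M\mapsto M\cap H_{k}$; this is measurable since $N_{A}\circ r=N_{A\cap H_{k}}$. The cr-set $\pi_{i}\cap H_{k}:=r\circ\pi_{i}$ takes values in $C_{k}:=\{M\in C(S):M\subseteq H_{k},\ |M|<\infty\}\in\mathcal{C}(\mathcal{S})$, on which every $N_{A}$ is finite and $N_{S}|_{C_{k}}=N_{H_{k}}|_{C_{k}}$ is a fixed integer combination (inclusion--exclusion over the $k$ sets $G_{j}$) of counts $N_{D}$ with $D$ a non-empty finite intersection of the $G_{j}$ --- each such $D$ lying in $\mathcal{E}$ by $\cap$-stability. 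Consequently the class $\{A\in\mathcal{S}:N_{A}|_{C_{k}}\ \text{is}\ \sigma(N_{E}|_{C_{k}}:E\in\mathcal{E})\text{-measurable}\}$ contains $\mathcal{E}$ and $S$, and --- here finiteness on $C_{k}$ is essential, so that $N_{S\setminus A}|_{C_{k}}=N_{S}|_{C_{k}}-N_{A}|_{C_{k}}$ --- is closed under complements and countable disjoint unions; being a Dynkin class over the $\cap$-stable family $\mathcal{E}$, it equals $\mathcal{S}$. Hence the trace $\mathcal{C}(\mathcal{S})\cap C_{k}$ is generated by $\{N_{E}|_{C_{k}}:E\in\mathcal{E}\}$, thus by the $\cap$-stable family of cylinders $\bigcap_{i}\{N_{E_{i}}=j_{i}\}\cap C_{k}$ with $E_{i}\in\mathcal{E}$, $j_{i}\in\N_{0}$.

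On that family the laws $Q_{i}^{(k)}$ of $\pi_{i}\cap H_{k}$ agree: indeed $N_{E}(\pi_{i}\cap H_{k})=N_{E\cap H_{k}}(\pi_{i})$, and since $E\cap H_{k}=\bigcup_{j\le k}(E\cap G_{j})$ with all relevant counts finite, inclusion--exclusion expresses $(N_{E_{1}\cap H_{k}}(\pi_{i}),\dots,N_{E_{n}\cap H_{k}}(\pi_{i}))$ as a fixed measurable function of the finite family $(N_{E_{l}\cap\bigcap_{j\in T}G_{j}}(\pi_{i}))_{l,T}$, whose members have the form $N_{D}(\pi_{i})$ with $D\in\mathcal{E}$; by \eqref{eq-theosigma} that finite family has the same joint law for $i=1,2$, so $Q_{1}^{(k)}=Q_{2}^{(k)}$ on the cylinders, hence (uniqueness of probability measures agreeing on a generating $\cap$-stable class) on $\mathcal{C}(\mathcal{S})\cap C_{k}$. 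Since both are carried by $C_{k}$, the laws of $\pi_{1}\cap H_{k}$ and of $\pi_{2}\cap H_{k}$ on $(C(S),\mathcal{C}(\mathcal{S}))$ coincide; call this common law $R^{(k)}$.

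To finish, let $C=\bigcap_{i=1}^{n}\{N_{A_{i}}\ge m_{i}\}$ with $A_{i}\in\mathcal{S}$, $m_{i}\in\N$; these sets form a $\cap$-stable generator of $\mathcal{C}(\mathcal{S})$. As $N_{A_{i}\cap H_{k}}\uparrow N_{A_{i}}$ when $k\to\infty$, the sets $r_{H_{k}}^{-1}(C)=\bigcap_{i}\{N_{A_{i}\cap H_{k}}\ge m_{i}\}$ increase (a finite intersection of increasing sequences) to $C$, so by continuity from below $P_{\pi_{i}}(C)=\lim_{k}P_{\pi_{i}}(r_{H_{k}}^{-1}(C))=\lim_{k}R^{(k)}(C)$, which does not depend on $i$. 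A last application of the uniqueness theorem gives $P_{\pi_{1}}=P_{\pi_{2}}$. The step I expect to be the real obstacle is the one carried out in the second and third paragraphs: reconciling the hypothesis, which speaks only of $A\in\mathcal{E}$, with the Dynkin machinery for $\mathcal{C}(\mathcal{S})$, which forces complements (and hence sets such as $H_{k}\notin\mathcal{E}$) and differences of counts that are invalid globally because $N_{H_{k}}$ may be infinite on $C(S)$ --- the way out being to keep every inclusion--exclusion summand inside $\mathcal{E}$ via $\cap$-stability, to run the Dynkin argument only on the finite-configuration spaces $C_{k}$, and then to recover the full law by the monotone limit above.
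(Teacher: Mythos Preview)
Your argument is correct and shares the paper's core idea: both proofs run a Dynkin/$\lambda$-system argument on a subspace where the relevant counts are finite, using finiteness to make differences $N_{B}-N_{A}$ legitimate. The execution, however, is organised differently. The paper localises \emph{once} to the subspace $C^{\sf F}(S)=\{M\in C(S):|M\cap E_{n}|<\infty\ \text{for all }n\}$ for a common $\mathcal{E}$-cover $(E_{n})$, and proves (Theorem~\ref{theo-generator2}) that $\{N_{E}^{\sf F}:E\in\mathcal{E}\}$ already generates the trace $\sigma$-field $\mathcal{C}^{\sf F}(\mathcal{S})$; since each $E_{n}\in\mathcal{E}$, the $\lambda$-system step needs no inclusion--exclusion, and since both $\pi_{i}$ land in $C^{\sf F}(S)$, no limiting argument is required at the end. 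You instead truncate to finite unions $H_{k}=G_{1}\cup\dots\cup G_{k}\notin\mathcal{E}$, which forces the inclusion--exclusion detour to stay inside $\mathcal{E}$, and then recover the full law via the monotone limit over the $\cap$-stable generator $\bigcap_{i}\{N_{A_{i}}\ge m_{i}\}$. Both routes are sound; the paper's is a bit shorter because working on the single $\sigma$-finite subspace $C^{\sf F}(S)$ collapses your two-step ``truncate, then let $k\to\infty$'' into one.
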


We will prove Theorem \ref{sigma-theorem} in Section 4, where we study generators of $\mathcal{C}(\mathcal{S})$ under $\sigma$-finiteness. We call a cr-set $\pi$ a Poisson process if its law satisfies the following two conditions: With respect to $P_{\pi}$, the random variables $N_{A_{1}},\ldots ,N_{A_{n}}$ are independent for any disjoint $A_{1},\ldots ,A_{n}\in\mathcal{S}$ and, secondly, $N_{A}$ has a Poisson distribution for any $A\in\mathcal{S}$. At this point it is important to mention that $\delta_{0}$ and $\delta_{\infty}$ are considered to be Poisson distributions.

 The following example shows that $\sigma$-finiteness is weaker than finiteness on bounded sets. Note that finiteness on bounded sets implies $\sigma$-finiteness, when working on metric spaces.

\begin{example} {\bf (Cr-Set with Point of Accumulation)} \label{example1}Let the state space $(S,\mathcal{S})$ be the real numbers $\R$ equipped with its Borel-$\sigma$-field $\mathcal{B}(\R)$. Let $\lambda$ be the Lebesgue-measure and define $\mu(A):=\sum_{n\in\N}\lambda(A\cap (-\frac{1}{n},\frac{1}{n}))$ for $A\in\mathcal{S}$. By the existence theorem in \cite[Section 2.5]{kingman} there exists a Poisson process $\pi$ with intensity measure $\mu$. Then $\pi$ is not finite on bounded sets, but possesses a version that is $\sigma$-finite on the closed subsets of $\R$.
\end{example}

Most of our work on uniqueness in law in this paper was motivated by R\'{e}nyi's Theorem on Poisson processes, as presented in \cite[Section 3.4]{kingman}. Assuming finiteness on bounded sets, R\'{e}nyi proved in \cite{renyi} that the law of a Poisson processes $\pi$ on $\R$ is determined by the \emph{hitting probabilities} $P(\pi\cap A\neq \emptyset)$ of sets $A$ taken from a ring generating the Borel-sets of $\R$. M\"onch then showed in \cite{moench} that R\'{e}nyi's idea works in general for polish state spaces and simple point processes that are finite on bounded sets. We are going to show that M\"onch's assumptions can still be weakened. To begin with, $\sigma$-finiteness suffices again:

\begin{theorem} \label{theo-unique-ring}
Let $\Delta\in\mathcal{S}\otimes\mathcal{S}$ and let $\mathcal{R}\subseteq \mathcal{S}$ be a ring with $\sigma(\mathcal{R})=\mathcal{S}$. If $\pi_{1},\pi_{2}: \Om\to C(S)$ are two cr-sets satisfying 
   \[ P(\pi_{1}\cap A\neq \emptyset)=P(\pi_{2}\cap A\neq \emptyset)\] for all $A\in\mathcal{R}$ and if $\pi_{1}$ is $\sigma$-finite on $\mathcal{R}$, then $P_{\pi_{1}}=P_{\pi_{2}}$. \end{theorem}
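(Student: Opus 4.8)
The plan is to reduce the hitting-function hypothesis to the cylinder-event hypothesis of Theorem~\ref{sigma-theorem}, applied to a suitable generator of $\mathcal{S}$. First I would fix, using the $\sigma$-finiteness of $\pi_{1}$ on $\mathcal{R}$, a countable covering $S=\bigcup_{m}B_{m}$ with $B_{m}\in\mathcal{R}$ and $\pi_{1}\cap B_{m}$ finite; replacing the $B_{m}$ by finite unions we may assume $B_{m}\uparrow S$. The key point is that on each $B_{m}$ the restricted set is almost surely finite, so the hitting probabilities of subsets of $B_{m}$ carry enough information to recover the full distribution of the finite point configuration $\pi_{1}\cap B_{m}$; this is the classical R\'{e}nyi/M\"onch inclusion–exclusion mechanism, which I would now carry out in the measurable, $\sigma$-finite setting. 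Concretely, for a fixed $m$ and finitely many disjoint sets $A_{1},\dots,A_{n}\in\mathcal{R}$ contained in $B_{m}$, the joint law of $(N_{A_{1}},\dots,N_{A_{n}})$ under $P_{\pi_{1}}$ is determined by the quantities $P(\pi_{1}\cap C\neq\emptyset)$ for $C$ ranging over finite unions of sets from $\mathcal{R}$ lying in $B_{m}$: one writes the event $\{N_{A_{i}}=k_{i}\text{ for all }i\}$ as a limit of events obtained by partitioning each $A_{i}$ more and more finely (using that $\mathcal{R}$ is a ring, so finite intersections and set differences within $\mathcal{R}$ are available) and then an inclusion–exclusion formula expresses the probability of hitting a prescribed number of cells in terms of hitting probabilities of unions of cells.

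The second step is to transfer this from $\pi_{1}$ to $\pi_{2}$. Since $P(\pi_{1}\cap A\neq\emptyset)=P(\pi_{2}\cap A\neq\emptyset)$ for every $A\in\mathcal{R}$, and since finite unions of sets in $\mathcal{R}$ again lie in $\mathcal{R}$, the hitting probabilities agree on all the sets $C$ used above; hence the inclusion–exclusion identities force
\[
P_{\pi_{1}}(N_{A_{1}}=k_{1},\dots,N_{A_{n}}=k_{n})=P_{\pi_{2}}(N_{A_{1}}=k_{1},\dots,N_{A_{n}}=k_{n})
\]
for all disjoint $A_{1},\dots,A_{n}\in\mathcal{R}$ with $A_{i}\subseteq B_{m}$, first under the additional constraint that the $k_{i}$ are finite (the only case that occurs on the almost-surely-finite configuration $\pi_{1}\cap B_{m}$), and then for general $k_{i}\in\N_{0}\cup\{\infty\}$ by taking $m\to\infty$ together with monotone limits, which also shows en route that $\pi_{2}$ is $\sigma$-finite on $\mathcal{R}$. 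Dropping the disjointness assumption is routine: arbitrary $A_{1},\dots,A_{n}\in\mathcal{R}$ can be refined into disjoint pieces in $\mathcal{R}$ and the joint counts expressed as sums of the counts on the pieces.

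Finally, $\mathcal{R}$ is a $\cap$-stable generator of $\mathcal{S}=\sigma(\mathcal{R})$, and by the previous two steps $\pi_{1},\pi_{2}$ are both $\sigma$-finite on $\mathcal{R}$ and satisfy the cylinder-event equality~\eqref{eq-theosigma} with $\mathcal{E}=\mathcal{R}$; Theorem~\ref{sigma-theorem} then yields $P_{\pi_{1}}=P_{\pi_{2}}$. The hypothesis $\Delta\in\mathcal{S}\otimes\mathcal{S}$ enters only through the tools of Section~2 needed to guarantee that the relevant versions and restrictions $\pi_{i}\cap B_{m}$ behave measurably. I expect the main obstacle to be the first step: making the inclusion–exclusion recovery of the joint count distribution from hitting probabilities fully rigorous without a topology on $S$ — in particular, justifying the refining-partition limit purely measure-theoretically and handling the passage between "$\sigma$-finite" and genuinely finite configurations, so that the limits $m\to\infty$ and the partition refinements interchange correctly and the $k_{i}=\infty$ cases are covered.
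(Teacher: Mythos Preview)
Your approach is correct but genuinely different from the paper's. The paper never passes through Theorem~\ref{sigma-theorem}; instead it stays entirely in the world of hitting functions (Section~5): using that $\pi_{1}\cap R_{n}$ is finite, Proposition~\ref{prop2-5} gives continuity of $T_{\pi_{1}\cap R_{n}}$, Corollary~\ref{coro-dc1} then yields $T_{\pi_{2}\cap R_{n}}\leq T_{\pi_{1}\cap R_{n}}$ on all of $\mathcal{S}$, from which one bootstraps continuity of $T_{\pi_{2}\cap R_{n}}$, and finally Proposition~\ref{prop2-4} (a monotone-class argument) gives $T_{\pi_{1}\cap R_{n}}=T_{\pi_{2}\cap R_{n}}$; letting $n\to\infty$ and invoking Proposition~\ref{prop-central} finishes. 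Your route---lift hitting probabilities on $\mathcal{R}$ to the full joint law of $(N_{A_{1}},\dots,N_{A_{n}})$ via inclusion--exclusion and dissecting refinements, deduce $\sigma$-finiteness of $\pi_{2}$, then feed into Theorem~\ref{sigma-theorem}---is sound, and your ``main obstacle'' dissolves once you notice that Proposition~\ref{prop-selfdis2} makes $\mathcal{R}$ self-dissecting purely from $\Delta\in\mathcal{S}\otimes\mathcal{S}$ and $\sigma(\mathcal{R})=\mathcal{S}$; Lemma~\ref{lemma-leadbetter} then gives the refining-partition limit pointwise on all of $C(S)$, so no preliminary restriction to $B_{m}$ or separate handling of $k_{i}=\infty$ is needed for the count-recovery step. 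In fact, since $\mathcal{R}$ is $\cup$-stable, $\{\{N_{A}=0\}\mid A\in\mathcal{R}\}$ is already a $\cap$-stable generator of $\mathcal{C}(\mathcal{R})=\mathfrak{H}(\mathcal{R})$ (Theorem~\ref{theo-hit}), so your inclusion--exclusion is just the uniqueness theorem for measures on $\mathcal{C}(\mathcal{R})$. What the paper's approach buys is the regularity theory for hitting functions that is reused verbatim for Theorem~\ref{theo-unique-closed}; what your approach buys is a shorter, more self-contained proof of this particular theorem that avoids Section~5 altogether.
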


Note that in Theorem \ref{theo-unique-ring} $\sigma$-finiteness is required only for one of the two cr-sets. The proof of Theorem \ref{theo-unique-ring} is provided in Section 5, where we study hitting functions of cr-sets. The \emph{hitting function} of a cr-set $\pi$ is the map $\mathcal{S}\to [0,1]$ assigning each $A\in\mathcal{S}$ its hitting probability $P(\pi\cap A\neq \emptyset)$. If any of the cr-sets involved is not $\sigma$-finite, the methods of Section 4 do not work. This is shown in Section 3, which deals with the limitations of use of generators in $(C(S),\mathcal{C}(\mathcal{S}))$. Therefore Theorem \ref{theo-unique-ring} will be proven by analysis of hitting functions.

Applying Theorem \ref{theo-unique-ring} together with the existence theorem for Poisson processes \cite[Section 2.5]{kingman}, we get the following version of R\'{e}nyi's theorem.  

\begin{corollary} {\bf (R\'{e}nyi's Theorem, Version 1)}
 Let $\Delta\in\mathcal{S}\otimes\mathcal{S}$ and let $\mathcal{R}\subseteq \mathcal{S}$ be a ring with $\sigma(\mathcal{R})=\mathcal{S}$. If a measure $\mu:\mathcal{S}\to [0,\infty]$ is $\sigma$-finite on $\mathcal{R}$ with $\mu(\{x\})=0$ for all $x\in S$, and if $\pi$ is a cr-set satisfying \[ P(\pi\cap A=\emptyset)=e^{-\mu(A)} \] for all $A\in\mathcal{R}$, then $\pi$ is a Poisson process with intensity $\mu$.
\end{corollary}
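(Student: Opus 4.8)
The plan is to deduce this from the existence theorem for Poisson processes together with Theorem \ref{theo-unique-ring}, by producing a reference Poisson process with the right hitting probabilities on $\mathcal{R}$ and invoking uniqueness in law. First I would note that $\sigma$-finiteness of $\mu$ on $\mathcal{R}$ gives a covering $S=\bigcup_{n\in\N}A_{n}$ with $A_{n}\in\mathcal{R}$ and $\mu(A_{n})<\infty$; since $A_{n}\in\mathcal{S}$, this already witnesses $\sigma$-finiteness of $\mu$ on $\mathcal{S}$. As $\mu$ is moreover non-atomic ($\mu(\{x\})=0$ for all $x$) and $\Delta\in\mathcal{S}\otimes\mathcal{S}$, the existence theorem in \cite[Section 2.5]{kingman} yields a Poisson process $\widetilde{\pi}$ on $(S,\mathcal{S})$ with intensity $\mu$. (Non-atomicity cannot be dropped: an atom of $\mu$ at $x$ would force the $\{0,1\}$-valued variable $N_{\{x\}}$ to have a nondegenerate Poisson law.)

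Next I would pass to a version of $\widetilde{\pi}$ that is $\sigma$-finite on $\mathcal{R}$. For each $n$ the variable $N_{A_{n}}(\widetilde{\pi})$ is Poisson with finite parameter $\mu(A_{n})$, hence finite $P$-a.s., so the set $\Om_{0}:=\{\om:\widetilde{\pi}(\om)\cap A_{n}\text{ is infinite for some }n\in\N\}$ is a countable union of $P$-null sets and therefore $P$-null. Redefining $\widetilde{\pi}$ to be $\emptyset$ on $\Om_{0}$ produces a cr-set $\pi_{1}$ (measurability into $(C(S),\mathcal{C}(\mathcal{S}))$ being routine) that has the same law as $\widetilde{\pi}$, so it is still a Poisson process with intensity $\mu$, and is now $\sigma$-finite on $\mathcal{R}$ along $(A_{n})_{n\in\N}$.

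Then I would compare hitting probabilities on $\mathcal{R}$: for $A\in\mathcal{R}$ with $\mu(A)<\infty$ we have $P(\pi_{1}\cap A=\emptyset)=P(N_{A}(\pi_{1})=0)=e^{-\mu(A)}$, while for $A\in\mathcal{R}$ with $\mu(A)=\infty$ the variable $N_{A}(\pi_{1})$ equals $\infty$ a.s., so $P(\pi_{1}\cap A=\emptyset)=0=e^{-\mu(A)}$; in either case this matches $P(\pi\cap A=\emptyset)$ by hypothesis, i.e. $P(\pi_{1}\cap A\neq\emptyset)=P(\pi\cap A\neq\emptyset)$ for all $A\in\mathcal{R}$. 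Since $\Delta\in\mathcal{S}\otimes\mathcal{S}$, $\sigma(\mathcal{R})=\mathcal{S}$ and $\pi_{1}$ is $\sigma$-finite on $\mathcal{R}$, Theorem \ref{theo-unique-ring} applies with $\pi_{1}$ in the role of the $\sigma$-finite cr-set and gives $P_{\pi}=P_{\pi_{1}}$. Being a Poisson process with intensity $\mu$ is a property of the law alone (joint independence of $N_{A_{1}},\dots,N_{A_{n}}$ over disjoint sets, $N_{A}$ Poisson, and $E(N_{A})=\mu(A)$ for all $A\in\mathcal{S}$), so it transfers from $\pi_{1}$ to $\pi$. I do not expect a serious obstacle: the two points needing care are the bookkeeping for $A\in\mathcal{R}$ with $\mu(A)=\infty$ (handled by the convention that $\delta_{\infty}$ is a Poisson distribution) and the construction of a version that is $\sigma$-finite \emph{on $\mathcal{R}$} rather than merely on $\mathcal{S}$, this being the only place where the full strength of the hypothesis ``$\mu$ $\sigma$-finite on $\mathcal{R}$'' is used.
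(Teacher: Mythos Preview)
Your proposal is correct and follows exactly the route the paper indicates: invoke the existence theorem from \cite[Section 2.5]{kingman} to produce a Poisson process with intensity $\mu$, pass to a version that is $\sigma$-finite on $\mathcal{R}$ (as the paper remarks is always possible when the intensity is $\sigma$-finite), and then apply Theorem \ref{theo-unique-ring}. The paper gives no further details beyond that one-line description, so your write-up simply fleshes out the intended argument.
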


The assumption of $\sigma$-finiteness can be weakened further to constructiveness if we switch to other determining classes instead of a ring.

\begin{definition}\label{defi-constr}
Let $\tau:\Om\to C(S)$ be a map. If $\tau$ is the union of countably many finite cr-sets $\pi_{k}$, i.e. $\tau(\om)=\bigcup_{k\in\nz}\pi_{k}(\om)$ for all $\om\in\Om$, $\tau$ is called \emph{constructive}. 
\end{definition} 

Provided $S$ is a topological space, countable intersections of open sets are called {\it $G_{\delta}$-sets}. Working on a separable metric space, we get the following characterisation by hitting probabilities for the law of a constructive cr-set:

\begin{theorem} \label{theo-unique-closed}
 Let $S$ be a separable metric space and let $\mathcal{S}$ be its Borel-$\sigma$-field. For two cr-sets $\pi_{1}$ and $\pi_{2}$ the following statements hold.
 \begin{satz-liste}
 \item[a)] If $\pi_{1}$ and $\pi_{2}$ are both constructive and satisfy
  \begin{equation}\label{eq-neu1}P(\pi_{1}\cap F\neq \emptyset)=P(\pi_{2}\cap F\neq \emptyset)\;\;\;\;\mbox{for all closed}\;\; F\subseteq S, \end{equation} then $P_{\pi_{1}}=P_{\pi_{2}}$.
  \item[b)] Suppose only that $\pi_{2}$ is constructive, then $P_{\pi_{1}}=P_{\pi_{2}}$ follows from (\ref{eq-neu1}), under the additional condition
  \begin{equation}\label{eq-tuc2}P(\pi_{2}\cap A\neq \emptyset)=0\;\:\mbox{implies}\;\:  P(\pi_{1}\cap A\neq \emptyset)=0\;\:\mbox{for all}\;\:A\in\mathcal{S}.
  \end{equation} 
  \item[c)] Suppose only that $\pi_{2}$ is constructive, then
  \[ P(\pi_{1}\cap A\neq \emptyset)=P(\pi_{2}\cap A\neq \emptyset)\] for all $G_{\delta}$-sets $A\subseteq S$ yields $P_{\pi_{1}}=P_{\pi_{2}}$.
 \end{satz-liste}
    
\end{theorem}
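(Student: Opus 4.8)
The plan is to reduce all three parts to Theorem~\ref{theo-unique-ring} by constructing a suitable ring from the closed (or $G_\delta$) sets and exploiting constructiveness to replace ``$\sigma$-finite'' with ``constructive'' in the hypotheses. The starting observation is that on a separable metric space $S$ the Borel-$\sigma$-field is countably generated, say by a countable base $\mathcal{U}=\{U_1,U_2,\ldots\}$ of open sets; the ring $\mathcal{R}$ generated by $\{U_1,U_2,\ldots\}$ together with their complements (equivalently, the ring generated by finitely many closed sets, since finite unions and differences of closed sets and their interiors stay in a countable algebra) is a countable ring with $\sigma(\mathcal{R})=\mathcal{S}$. The point of taking $\mathcal{R}$ countable is that a constructive cr-set, being a countable union $\bigcup_k \pi_k$ of \emph{finite} cr-sets, automatically satisfies a form of $\sigma$-finiteness: for each $k$ the set $\pi_k$ is finite on \emph{every} subset of $S$, so if I can arrange that hitting probabilities on $\mathcal{R}$ determine hitting probabilities on closed sets (and vice versa), I can feed the data into Theorem~\ref{theo-unique-ring}.

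For part~(a) I would argue as follows. First, every open set $U$ is an increasing countable union of closed sets $F_n$ (e.g.\ $F_n=\{x:\mathrm{dist}(x,U^c)\ge 1/n\}$), so by continuity from below $P(\pi_i\cap U\neq\emptyset)=\lim_n P(\pi_i\cap F_n\neq\emptyset)$; hence~(\ref{eq-neu1}) forces equality of hitting probabilities on all open sets as well, and then on the ring $\mathcal{R}$ generated by closed and open base elements (a finite disjoint union $A=\bigsqcup_j B_j$ has $P(\pi\cap A\neq\emptyset)$ expressible via inclusion-exclusion from the hitting probabilities of the $B_j$ and their intersections, all of which are again finite boolean combinations of base sets; here I must be slightly careful and instead note that hitting probabilities are \emph{not} additive, so rather than inclusion--exclusion I should observe that the relevant data for Theorem~\ref{theo-unique-ring} is just the hitting function on $\mathcal{R}$, and $\mathcal{R}$ itself consists of finite unions of differences of base sets — it suffices to show the two hitting functions agree on $\mathcal{R}$). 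The cleanest route: choose $\mathcal{R}$ to be the ring generated by the countable base $\mathcal{U}$; each member of $\mathcal{R}$ is then a Borel set that is simultaneously a finite union of sets of the form $U\setminus V$ with $U,V$ open, hence an $F_\sigma$; writing it as an increasing union of closed sets and using~(\ref{eq-neu1}) plus continuity from below gives agreement of the hitting functions on all of $\mathcal{R}$. Finally I must verify $\sigma$-finiteness of (one of) the $\pi_i$ on $\mathcal{R}$: since $\pi_1=\bigcup_k\pi_{1,k}$ with each $\pi_{1,k}$ finite, and $S=\bigcup_m R_m$ for countably many $R_m\in\mathcal{R}$ (because $\mathcal{R}\supseteq\mathcal{U}$ covers $S$), the set $\pi_1\cap R_m=\bigcup_k(\pi_{1,k}\cap R_m)$ is a countable union of finite sets — but that is not automatically finite. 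This is the gap that forces the extra work: constructiveness gives $\sigma$-finiteness only after a diagonal/refinement argument, so I would instead prove that a constructive cr-set admits a version that is $\sigma$-finite on $\mathcal{R}$, or — more likely the intended route — bypass Theorem~\ref{theo-unique-ring} and directly show that the hitting function on closed sets determines, via the constructive decomposition, the finite-dimensional distributions $P(N_{A_1}=k_1,\ldots)$ for $A_i$ in a generator, then invoke Theorem~\ref{sigma-theorem}.

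For part~(b), the constructive $\pi_2$ plays the role of the $\sigma$-finite cr-set and~(\ref{eq-tuc2}) is an absolute-continuity condition that lets me transfer a covering/negligibility statement from $\pi_2$ to $\pi_1$: wherever $\pi_2$ almost surely misses a set, so does $\pi_1$, which is exactly what is needed to run the one-sided version (Theorem~\ref{theo-unique-ring} already only assumes $\sigma$-finiteness of one of the two sets, and~(\ref{eq-tuc2}) is the bridge that makes the asymmetry legitimate when only $\pi_2$ is well-behaved). For part~(c), I would show that agreement of hitting probabilities on all $G_\delta$-sets is \emph{stronger} than~(\ref{eq-neu1}) together with~(\ref{eq-tuc2}): every closed set is a $G_\delta$ in a metric space, giving~(\ref{eq-neu1}) for free; and if $P(\pi_2\cap A\neq\emptyset)=0$ for a Borel set $A$, one approximates $A$ from outside by a $G_\delta$-set $G\supseteq A$ with $P(\pi_2\cap G\neq\emptyset)=0$ (using that the hitting function, as an outer-capacity-type functional, is regular from above on $G_\delta$'s — this uses the constructiveness of $\pi_2$ and a Choquet-style approximation), whence $P(\pi_1\cap G\neq\emptyset)=P(\pi_2\cap G\neq\emptyset)=0$ and so $P(\pi_1\cap A\neq\emptyset)=0$, establishing~(\ref{eq-tuc2}); then part~(c) follows from part~(b).

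The main obstacle I anticipate is precisely the passage from ``constructive'' to a usable $\sigma$-finiteness (or to direct control of finite-dimensional distributions): a countable union of finite cr-sets need not be $\sigma$-finite on a given ring, so the heart of the argument must be a lemma — presumably proved earlier in Section~2 or 3 or to be proved here — that lets one either (i) extract from a constructive cr-set a $\sigma$-finite version on the countable ring $\mathcal{R}$, or (ii) compute $P(N_{A_1}=k_1,\ldots,N_{A_n}=k_n)$ from hitting probabilities of closed/$G_\delta$ sets by summing over the finitely many ways the finite pieces $\pi_k$ can distribute their points among $A_1,\ldots,A_n$, and then cite Theorem~\ref{sigma-theorem}. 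The secondary technical point is the regularity-from-above of the hitting function on $G_\delta$-sets in part~(c), which should follow from separability plus the constructive decomposition by a standard tightness/approximation argument but needs to be stated carefully.
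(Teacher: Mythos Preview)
Your primary strategy---reducing to Theorem~\ref{theo-unique-ring} by exhibiting a ring $\mathcal{R}$ on which a constructive cr-set is $\sigma$-finite---cannot be completed, and you correctly locate the obstruction yourself: a countable union of finite cr-sets need not be $\sigma$-finite on \emph{any} ring. Example~\ref{example2} in the paper is designed to show exactly this (a constructive cr-set that is not $\sigma$-finite on $\mathcal{S}$), so neither your alternative~(i) (extract a $\sigma$-finite version on $\mathcal{R}$) nor the route through Theorem~\ref{sigma-theorem} (which again needs $\sigma$-finiteness) can work in general. Your alternative~(ii) is too vague to count as a plan: knowing the hitting probabilities on closed sets does not let you reconstruct the joint distributions $P(N_{A_1}=k_1,\ldots,N_{A_n}=k_n)$ by ``summing over the finitely many ways the finite pieces $\pi_k$ can distribute their points'', because the decomposition $\pi=\bigcup_k\pi_k$ is not canonical and the laws of the individual $\pi_k$ are not part of the data.

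The paper avoids $\sigma$-finiteness entirely and works directly with hitting functions. The key observation you are missing is that a \emph{finite} cr-set has a hitting function that is continuous from above (Proposition~\ref{prop2-5}); feeding this into an inner/outer approximation argument (Theorem~\ref{theo-dc1}, with $\mathcal{E}=$ open sets, $\mathcal{E}_{int}=$ closed sets) gives, for each finite piece $\pi_k$ and each $A\in\mathcal{S}$, closed $F_k\subseteq A$ and open $G_k\supseteq A$ with $T_{\pi_k}(G_k\setminus F_k)$ small. Taking unions over $k$ and a diagonal limit produces $K\in F_\sigma$, $L\in G_\delta$ with $K\subseteq A\subseteq L$ and $T_\pi(L\setminus K)=0$ (Lemma~\ref{prop-dc}). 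This immediately yields the inner regularity
\[
T_\pi(A)=\sup\{T_\pi(F):F\ \text{closed},\ F\subseteq A\}
\]
for every constructive $\pi$ (Theorem~\ref{theo-dc2}). Part~(a) is then one line: both $T_{\pi_1}$ and $T_{\pi_2}$ are determined on all of $\mathcal{S}$ by their values on closed sets, hence agree, and Proposition~\ref{prop-central} finishes. Part~(b) uses the same $K$ (built from the constructive $\pi_2$) together with~(\ref{eq-tuc2}) to transfer $T_{\pi_2}(A\setminus K)=0$ to $T_{\pi_1}(A\setminus K)=0$, after which both hitting functions equal $T_{\pi_i}(K)=\lim_n T_{\pi_i}(F_n)$ along an increasing sequence of closed sets.

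Your sketch for part~(c) is essentially correct and matches the paper: closed sets are $G_\delta$ in a metric space, so the $G_\delta$ hypothesis contains~(\ref{eq-neu1}); and for~(\ref{eq-tuc2}) one uses the outer $G_\delta$ approximant $L\supseteq A$ with $T_{\pi_2}(L\setminus A)=0$ from Lemma~\ref{prop-dc} (this is the ``Choquet-style'' step you allude to, but it needs the concrete lemma above, not just a general tightness heuristic), whence $T_{\pi_2}(L)=0$, then $T_{\pi_1}(L)=0$ by the $G_\delta$ hypothesis, and finally $T_{\pi_1}(A)=0$ by monotonicity.
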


The proof of Theorem \ref{theo-unique-closed} is provided in Section 5 as well. Similarly as above, combining the uniqueness Theorem \ref{theo-unique-closed} and the existence theorem, we obtain another version of R\'{e}nyi's theorem. This is possible, since the existence theorem provides always a constructive Poisson process.

\begin{corollary} {\bf (R\'{e}nyi's Theorem, Version 2)}
Let $S$ be a separable metric space, $\mathcal{S}$ its Borel-$\sigma$-field and let $\pi$ be a constructive cr-set. If for all $n\in\N$ $\mu_{n}:\mathcal{S}\to [0,\infty]$ is a finite measure with $\mu_{n}(\{x\})=0$ for all $x\in S$, and if \[ P(\pi\cap F=\emptyset)=e^{-\sum_{n\in\N}\mu_{n}(F)} \] for all closed sets $F\subseteq S$,
then $\pi$ is a Poisson process with intensity $\sum_{n\in\N}\mu_{n}$.
\end{corollary}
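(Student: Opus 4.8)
The plan is to deduce this corollary from Theorem \ref{theo-unique-closed}a) together with the existence theorem for Poisson processes. First I would set $\mu:=\sum_{n\in\N}\mu_{n}$. Since each $\mu_{n}$ is a finite measure, $\mu$ is $\sigma$-finite; indeed, writing $S$ as an increasing union along which each $\mu_{n}$ is finite — or more directly, observing that $\mu$ is a countable sum of finite measures and hence $\sigma$-finite — we may invoke \cite[Section 2.5]{kingman} to obtain a Poisson process $\tilde\pi$ with intensity measure $\mu$. The existence theorem constructs $\tilde\pi$ as a union of countably many finite cr-sets (one for each piece of a $\sigma$-finite decomposition of $\mu$), so $\tilde\pi$ is \emph{constructive} in the sense of Definition \ref{defi-constr}. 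Moreover, since $\mu(\{x\})=\sum_n \mu_n(\{x\})=0$ for every $x\in S$, the Poisson process $\tilde\pi$ is simple, which is what makes it a genuine cr-set rather than merely a point process.

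Next I would compute the hitting probabilities of $\tilde\pi$ on closed sets. For any $A\in\mathcal{S}$ the random variable $N_{A}(\tilde\pi)$ is Poisson distributed with parameter $\mu(A)$ (allowing the degenerate values $\mu(A)=0$ and $\mu(A)=\infty$, which by the convention stated after Theorem \ref{sigma-theorem} correspond to $\delta_0$ and $\delta_\infty$), hence
\[
P(\tilde\pi\cap A=\emptyset)=P(N_{A}(\tilde\pi)=0)=e^{-\mu(A)}=e^{-\sum_{n\in\N}\mu_{n}(A)}.
\]
In particular this holds for all closed $F\subseteq S$, so by hypothesis $P(\pi\cap F=\emptyset)=P(\tilde\pi\cap F=\emptyset)$, and therefore $P(\pi\cap F\neq\emptyset)=P(\tilde\pi\cap F\neq\emptyset)$ for every closed $F$. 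Since $S$ is a separable metric space and both $\pi$ and $\tilde\pi$ are constructive cr-sets, Theorem \ref{theo-unique-closed}a) applies and yields $P_{\pi}=P_{\tilde\pi}$. As equality in law transfers all distributional properties, $\pi$ inherits from $\tilde\pi$ the defining properties of a Poisson process (independence of $N_{A_1},\dots,N_{A_n}$ for disjoint measurable $A_i$, and Poisson marginals), so $\pi$ is a Poisson process with intensity $\mu=\sum_{n\in\N}\mu_{n}$.

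The only genuinely delicate point is making sure the existence theorem actually delivers a \emph{constructive} version: the corollary's conclusion relies on applying part a) of Theorem \ref{theo-unique-closed}, which requires both cr-sets to be constructive, and $\pi$ is assumed constructive but $\tilde\pi$ must be arranged to be so as well. This is where the remark preceding the corollary — that the existence theorem always provides a constructive Poisson process — is used; concretely, one splits $\mu$ into the finite pieces $\mu_n$ (or into the pieces of any $\sigma$-finite exhaustion), builds an independent finite Poisson process for each piece via \cite[Section 2.5]{kingman}, and takes their union, which is constructive by definition and Poisson by the superposition property. Everything else is bookkeeping with the Poisson distribution and the convention on degenerate parameters. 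Alternatively, one could bypass part a) and instead route through part b) using that $P(\tilde\pi\cap A\neq\emptyset)=0$ exactly when $\mu(A)=0$, which forces $\mu_n(A)=0$ for all $n$ and hence controls $\pi$; but the constructive route via part a) is cleanest given the remark already recorded in the text.
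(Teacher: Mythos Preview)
Your proposal is correct and matches the paper's intended argument: the text preceding the corollary says precisely that one combines Theorem \ref{theo-unique-closed} with the existence theorem (cf.\ also Remark \ref{theo-existence}), and your write-up fills in exactly those details. One small slip: a countable sum of finite measures need not be $\sigma$-finite, so your aside ``$\mu$ is $\sigma$-finite'' is not justified---but this is harmless, since your actual construction (superposing independent finite Poisson processes with intensities $\mu_{n}$) does not use it and is exactly the content of Remark \ref{theo-existence}.
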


Besides uniqueness in law of cr-sets, we are also going to study Definition \ref{defi-constr} in this paper. This is done in Section 6. We will prove that our notion of constructiveness is equivalent to  Kendall's ``constructive countability'' \cite{kendall}: For a cr-set $\pi:\Om\to C(S)$ and $E\in\mathcal{F}$ define $1_{E}\,\pi:\Om\to C(S)$ by
\[ 1_{E}\,\pi(\om):= \left\{ \begin{array}{cc} \pi(\om) & \mbox{for}\:\om\in E,  \\ \emptyset & 
\mbox{otherwise.} \end{array} \right.\]
Like $\pi$, the map $1_{E}\,\pi$ is $\mathcal{F}$-$\mathcal{C}(\mathcal{S})$ measurable. Let $X_{n}: (\Om,\mathcal{F})\to (S,\mathcal{S})$, $n\in\N$, be a sequence of random variables. Since $\om\mapsto \{X_{n}(\om)\}$ are finite cr-sets, so are $1_{E}\,\{X_{n}\}$ and it follows that the map $1_{E}\,\{X_{1},X_{2},\ldots\}$ is constructive. If $\Delta\in\mathcal{S}\otimes\mathcal{S}$, then also the converse is true and any constructive map has this special form, that Kendall calls ``constructive countability''. This is shown in the next theorem, which can be regarded as a measurable selection theorem for constructive maps.

\begin{theorem} \label{theo-constructive1}
 Let $\Delta\in\mathcal{S}\otimes\mathcal{S}$ and let $\tau :\Om\to C(S)$ be constructive. Then there exist random variables $X_{n}:(\Om,\mathcal{F})\to (S,\mathcal{S})$, $n\in\N$, with $\tau=1_{\{\tau\neq\emptyset\}}\,\{X_{1},X_{2},\ldots \}$.
\end{theorem}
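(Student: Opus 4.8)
The plan is to start from the defining decomposition $\tau(\om)=\bigcup_{k\in\N}\pi_k(\om)$ with each $\pi_k$ a finite cr-set, and to reduce to the case of a single finite cr-set. For a finite cr-set $\pi$, the essential point is: if $N_S(\pi)$ denotes the (a.s.\ finite) cardinality, then on the event $\{N_S=m\}$ one can enumerate the $m$ points of $\pi(\om)$ measurably. To do this I would use that $\Delta\in\mathcal{S}\otimes\mathcal{S}$, which (as developed in Section 2, and as is classical for random measures — cf.\ Kallenberg) forces a suitable "separating" structure on $(S,\mathcal{S})$: there is a countable algebra $\mathcal{A}_0\subseteq\mathcal{S}$ that separates points, so that for $x\ne y$ some $A\in\mathcal{A}_0$ contains exactly one of them. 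Using such an $\mathcal{A}_0$ one can, for each fixed $m$, partition $\{N_S(\pi)=m\}$ into countably many measurable pieces according to which cells of a finite sub-partition generated by $\mathcal{A}_0$ the $m$ points fall into, refine until every cell contains at most one point, and thereby pick out the $i$-th point $Y_i^{(m)}(\om)$ as a measurable $S$-valued map (its preimage of any $A\in\mathcal{A}_0$, hence of any $A\in\mathcal{S}$, being measurable). Stitching the $Y_i^{(m)}$ together over $m$ gives, for the finite cr-set $\pi$, countably many random variables $Z_1,Z_2,\dots$ with $\{Z_1(\om),Z_2(\om),\dots\}=\pi(\om)$ whenever $\pi(\om)\ne\emptyset$ and with the $Z_j$ defined arbitrarily (say, equal to some fixed $Z_1$, or to a fixed point of $S$) on $\{\pi=\emptyset\}$; in either case $1_{\{\pi\ne\emptyset\}}\{Z_1,Z_2,\dots\}=\pi$.

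Granting this for finite cr-sets, the general case follows by a diagonal argument: apply the construction to each $\pi_k$ to obtain random variables $(Z_j^{(k)})_{j\in\N}$ with $1_{\{\pi_k\ne\emptyset\}}\{Z_1^{(k)},Z_2^{(k)},\dots\}=\pi_k$, and then let $\{X_1,X_2,\dots\}$ be any fixed enumeration of the double sequence $(Z_j^{(k)})_{k,j\in\N}$. Off the exceptional event one has $\{X_1(\om),X_2(\om),\dots\}=\bigcup_k\{Z_j^{(k)}(\om):j\in\N\}=\bigcup_k\pi_k(\om)=\tau(\om)$, while on $\{\tau=\emptyset\}$ (which forces each $\pi_k=\emptyset$) the left-hand side of the asserted identity is $\emptyset$ by the indicator; one has to be slightly careful so that the $X_n$ are genuinely defined everywhere on $\Om$, which the "fixed fallback value" convention above arranges. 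Finally one checks measurability of the $X_n$ as maps $(\Om,\mathcal{F})\to(S,\mathcal{S})$ — this is inherited from the finite-cr-set construction, since a countable reindexing preserves measurability.

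The main obstacle is the finite case: producing a genuinely $\mathcal{F}$-$\mathcal{S}$ measurable enumeration of the points of a finite cr-set, using only $\Delta\in\mathcal{S}\otimes\mathcal{S}$ rather than any topological or Polish hypothesis. This is exactly where the tools of Section 2 on the diagonal condition must be invoked: one needs to know that $\Delta\in\mathcal{S}\otimes\mathcal{S}$ yields a countable point-separating subfamily of $\mathcal{S}$ (and that the maps $\om\mapsto N_A(\pi(\om))$ for $A$ in that family generate enough structure). I would state and use the relevant Section 2 lemma explicitly. A secondary technical point is handling points of multiplicity: since cr-sets are \emph{sets}, $\pi(\om)$ has no repeated points, so the separating family does succeed in isolating each point after finitely many refinements on the event $\{N_S=m\}$ — but one must phrase the refinement so that it terminates measurably, e.g.\ by taking, for each $\om$, the coarsest partition (among the first $r$ cells, $r\to\infty$) that already separates all $m$ points, and noting this "first good $r$" is a measurable function of $\om$.
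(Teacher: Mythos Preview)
Your overall strategy---reduce to the finite case via a countable point-separating family coming from $\Delta\in\mathcal{S}\otimes\mathcal{S}$, then glue---is exactly the paper's. The finite-cr-set enumeration you sketch is close to the paper's Lemma~\ref{propA1} and Theorem~\ref{theoA1}, which build a nested partition $(Z_{n,k})$ from the separating family and iteratively extract points; your ``first good $r$'' phrasing is a bit vague but can be made precise along those lines.

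There is, however, a real gap in your gluing step. You write that off $\{\tau=\emptyset\}$ one has
\[
\{X_{1}(\om),X_{2}(\om),\ldots\}=\bigcup_{k}\{Z_{j}^{(k)}(\om):j\in\N\}=\bigcup_{k}\pi_{k}(\om)=\tau(\om),
\]
but the middle equality fails at any $\om$ with $\tau(\om)\neq\emptyset$ and \emph{some} $\pi_{k}(\om)=\emptyset$. On that event your $Z_{j}^{(k)}(\om)$ take the fallback value, and if this is ``a fixed point of $S$'' (your suggestion), that point need not lie in $\tau(\om)$; the enumeration then strictly contains $\tau(\om)$ and the identity $\tau=1_{\{\tau\neq\emptyset\}}\{X_{1},X_{2},\ldots\}$ is false. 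Your other suggestion, ``equal to some fixed $Z_{1}$'', is circular on $\{\pi_{k}=\emptyset\}$.

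The fix---and this is precisely what the paper does---is to choose the fallback value so that it already lies in $\tau(\om)$ whenever $\tau(\om)\neq\emptyset$. Concretely, first build a single measurable selector $X:\Om\to S$ with $X(\om)\in\tau(\om)$ on $\{\tau\neq\emptyset\}$: partition $\{\tau\neq\emptyset\}$ into the disjoint events $\{\pi_{1}=\cdots=\pi_{n-1}=\emptyset,\ \pi_{n}\neq\emptyset\}$ and on each piece use the finite-case selector for $\pi_{n}$. Then redefine $X_{n,k}:=Z_{k}^{(n)}$ on $\{\pi_{n}\neq\emptyset\}$ and $X_{n,k}:=X$ on $\{\pi_{n}=\emptyset\}$. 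Now every $X_{n,k}(\om)$ lies in $\tau(\om)$ on $\{\tau\neq\emptyset\}$, and the union over $(n,k)$ recovers $\tau(\om)$ exactly.
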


Any $\sigma$-finite cr-set is constructive. The converse is not true, as the following example shows.

\begin{example} \label{example2} {\bf (Cr-Set with Random Point of Accumulation)} Let $\pi$ be a $\sigma$-finite version of the Poisson process in Example \ref{example1}. Then, by Theorem \ref{theo-constructive1}, there exist random variables $X_{1},X_{2}\ldots :(\Om,\mathcal{F})\to (\R,\mathcal{B}(\R))$ with $\pi=1_{\{\pi\neq\emptyset\}}\,\{X_{1},X_{2},\ldots \}$.\\ We are going to randomly shift $\pi$ on $\R$. For this purpose, let $Z:(\Om,\mathcal{F})\to (\R,\mathcal{B}(\R))$ be a random variable that is independent of $X_{1},X_{2},\ldots$ and let the law of $Z$ be equivalent to $\lambda$. If we define $\tau:=1_{\{\pi\neq\emptyset\}}\,\{X_{1}+Z,X_{2}+Z,\ldots \}$, we obtain $0<P(N_{A}(\tau)=\infty)<1$ for all bounded Borel-sets $A\subset \R$ with $\lambda(A)>0$. The cr-set $\tau$ is constructive and not $\sigma$-finite.
 
\end{example}

We will give an explanation of Example \ref{example2} at the end of Section 6. We believe that constructive cr-sets with random points of accumulation might be an interesting class of cr-sets to study.

 A constructive cr-set can be decomposed into a $\sigma$-finite part and a part that is nowhere $\sigma$-finite. This is the subject of the next theorem.
Note that $\pi\cap A$ is a cr-set for any cr-set $\pi:\Om\to C(S)$ and for any $A\in\mathcal{S}$.

\begin{theorem}\label{theo-decomp}
 Let $\Delta\in\mathcal{S}\otimes\mathcal{S}$ and let $\pi$ be a constructive cr-set. Then there exists $F\in\mathcal{S}$ such that
\begin{enumerate}
 \item[(i)] $\pi\cap F$ possesses a version that is $\sigma$-finite on $\mathcal{S}$,
 \item[(ii)] for all $A\in\mathcal{S}$ holds $P((\pi\cap F^{c})\cap A\neq \emptyset)=0$ or $P(|(\pi\cap F^{c})\cap A|=\infty)>0$.
\end{enumerate}
If two sets $F_{1},F_{2}\in\mathcal{S}$ satisfy (i) and (ii), then $P(\pi\cap (F_{1}\vartriangle F_{2})\neq \emptyset)=0$.
\end{theorem}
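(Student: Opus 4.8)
The plan is to produce $F$ by an exhaustion (essential–supremum) argument inside the class
\[
\mathcal{G}:=\bigl\{A\in\mathcal{S}\ :\ \pi\cap A\text{ possesses a version that is }\sigma\text{-finite on }\mathcal{S}\bigr\},
\]
and then to read off the uniqueness clause directly from (i) and (ii). Set $\mathcal{G}_{0}:=\{A\in\mathcal{S}:\pi\cap A\text{ is almost surely finite}\}$ and call $A\in\mathcal{S}$ \emph{$\pi$-null} if $P(\pi\cap A\neq\emptyset)=0$. One first records the routine facts that (a) $\mathcal{G}$ is stable under measurable subsets and under countable unions — for unions, given $A_{n}\in\mathcal{G}$ one disjointifies $A_{n}':=A_{n}\setminus\bigcup_{k<n}A_{k}$, refines each along a $\sigma$-finite cover witnessing $A_{n}\in\mathcal{G}$, and assembles the finite versions — so that in fact $\mathcal{G}$ consists exactly of the countable unions of members of $\mathcal{G}_{0}$; (b) $\mathcal{G}_{0}\subseteq\mathcal{G}$, an a.s.\ finite $\pi\cap A$ having a finite version after modification on a $\pi$-null event. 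Since $\pi$ is constructive, Theorem \ref{theo-constructive1} gives $\pi=1_{\{\pi\neq\emptyset\}}\{X_{1},X_{2},\dots\}$; using $\Delta\in\mathcal{S}\otimes\mathcal{S}$ (which makes each $\{X_{i}=X_{j}\}$ measurable) we may, after discarding repeated first occurrences, rewrite this as $\pi=\bigsqcup_{j}\sigma_{j}$ with $\sigma_{j}:=1_{F_{j}}\{Y_{j}\}$, $F_{j}\in\mathcal{F}$, and the $\sigma_{j}(\om)$ pairwise disjoint for every $\om$; hence $N_{A}(\pi)=\sum_{j}1_{\{Y_{j}\in A\}\cap F_{j}}$ and the intensity $\mu$ of $\pi$ equals $\sum_{j}\mu_{j}$ with each $\mu_{j}(\cdot):=P(Y_{j}\in\cdot,\,F_{j})$ a sub-probability measure.

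To construct $F$, take it to be a \emph{$\pi$-essential supremum of $\mathcal{G}_{0}$}: among all $A\in\mathcal{G}$ (equivalently, all countable unions of $\mathcal{G}_{0}$-sets) pick one, $F$, maximising the finite, monotone, $\sigma$-subadditive set function $\psi(A):=E\!\left[1-e^{-N_{A}(\pi)}\right]$, whose zero sets are precisely the $\pi$-null sets; such $F$ exists because $\psi\le 1$ and $\mathcal{G}$ is closed under countable unions, and it lies in $\mathcal{G}$, which is exactly (i). For (ii), fix $A\in\mathcal{S}$ and put $B:=F^{c}\cap A$. If $B\notin\mathcal{G}_{0}$ then $P(|(\pi\cap F^{c})\cap A|=\infty)=P(|\pi\cap B|=\infty)>0$, the second alternative. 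If $B\in\mathcal{G}_{0}$ then $F\cup B\in\mathcal{G}$, so $\psi(F\cup B)\le\psi(F)$; as $N_{F\cup B}(\pi)=N_{F}(\pi)+N_{B}(\pi)$ and $t\mapsto 1-e^{-t}$ is strictly increasing on $\N_{0}\cup\{\infty\}$, equality of the $\psi$-values forces $N_{B}(\pi)=0$ wherever $N_{F}(\pi)<\infty$; combined with $N_{B}(\pi)<\infty$ a.s.\ (as $B\in\mathcal{G}_{0}$), this leaves only to rule out that $\pi$ hits $B$ on the event $\{N_{F}(\pi)=\infty\}$, in order to reach $P(\pi\cap B\neq\emptyset)=0$, the first alternative. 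Eliminating this residual possibility is the technical heart of the existence proof and is where the constructive representation $\pi=\bigsqcup_{j}\sigma_{j}$ is used: one keeps absorbing such residues into $F$, the crucial point being that the successively absorbed pieces are pairwise disjoint and carry only finitely many points a.s.; one verifies that this (transfinite) exhaustion stabilises — using the boundedness of $\psi$ on the disjoint pieces — and that the resulting $F\in\mathcal{G}$ leaves no non-$\pi$-null member of $\mathcal{G}_{0}$ inside $F^{c}$, which is exactly (ii). (One should note that the cruder approach of splitting $\mu=\sum_{j}\mu_{j}$ into a $\sigma$-finite and a purely infinite part, via the essential supremum of $\varphi(A):=\sum_{j}2^{-j}\min(1,\mu_{j}(A))$ and the bound $\mu_{j}(S)\le 1$, yields a set that is in general strictly too small: a \emph{finite} cr-set can have purely infinite intensity, while for it $S\in\mathcal{G}$ and the correct choice is $F=S$; cf.\ also Example \ref{example2}.)

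The uniqueness clause is clean and needs neither constructiveness nor the above construction. Let $F_{1},F_{2}\in\mathcal{S}$ both satisfy (i) and (ii); by symmetry it suffices to show $P(\pi\cap(F_{1}\setminus F_{2})\neq\emptyset)=0$. Put $A:=F_{1}\setminus F_{2}\subseteq F_{2}^{c}$, so that $(\pi\cap F_{2}^{c})\cap A=\pi\cap A$, and apply (ii) for $F_{2}$ to $A$: either $P(\pi\cap A\neq\emptyset)=0$, and we are done, or $P(|\pi\cap A|=\infty)>0$. In the latter case use (i) for $F_{1}$: since $A\subseteq F_{1}$, a $\sigma$-finite version of $\pi\cap F_{1}$ with cover $S=\bigcup_{m}D_{m}$ gives $A=\bigcup_{m}C_{m}$ with $C_{m}:=A\cap D_{m}$ and $\pi\cap C_{m}$ a.s.\ finite, i.e.\ $C_{m}\in\mathcal{G}_{0}$. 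Applying (ii) for $F_{2}$ to each $C_{m}\subseteq F_{2}^{c}$ and using $C_{m}\in\mathcal{G}_{0}$ to exclude its second alternative gives $P(\pi\cap C_{m}\neq\emptyset)=0$ for every $m$, whence $P(\pi\cap A\neq\emptyset)\le\sum_{m}P(\pi\cap C_{m}\neq\emptyset)=0$, contradicting $P(|\pi\cap A|=\infty)>0$. So the first alternative always holds; symmetrically $P(\pi\cap(F_{2}\setminus F_{1})\neq\emptyset)=0$, and therefore $P(\pi\cap(F_{1}\vartriangle F_{2})\neq\emptyset)=0$.

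The main obstacle is thus the existence of an $F$ satisfying (ii) \emph{exactly}: the hitting functional $A\mapsto P(\pi\cap A\neq\emptyset)$ is only subadditive, so a single essential supremum can leave a ``residue'' — a set in $\mathcal{G}_{0}$ that is hit with positive probability, yet only on the event where $\pi$ already has infinitely many points in $F$ — and purging all such residues (so that $F^{c}$ contains no non-$\pi$-null set on which $\pi$ is a.s.\ finite) is precisely the step that requires the constructive representation and a careful, iterative exhaustion. By contrast the stability properties of $\mathcal{G}$, the verification of (i), and the uniqueness statement are routine.
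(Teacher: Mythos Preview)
Your uniqueness argument is correct and essentially the same as the paper's.

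For existence, however, there is a real gap exactly where you flag it. Your functional $\psi(A)=E[1-e^{-N_A(\pi)}]$ does give an $F\in\mathcal{G}$, but it cannot control the residue-clearing step: if $B\in\mathcal{G}_0$, $B\subseteq F^c$, is hit by $\pi$ only on $\{N_F(\pi)=\infty\}$, then $\psi(F\cup B)=\psi(F)$ because $1-e^{-N_F(\pi)}=1$ already there. So absorbing such a $B$ does \emph{not} increase $\psi$, and ``boundedness of $\psi$ on the disjoint pieces'' gives no termination for the transfinite exhaustion you propose. You have set up the constructive representation $\pi=\bigsqcup_j 1_{F_j}\{Y_j\}$ but never use it to prove the one fact that would make the exhaustion finite, namely that any disjoint family $\{B_i\}\subseteq\mathcal{S}$ with $T_\pi(B_i)>0$ is countable. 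That is immediate from your representation: $T_\pi(B_i)>0$ forces $P(Y_j\in B_i,\,F_j)>0$ for some $j$, and for each fixed $j$ only countably many disjoint $B_i$ can satisfy this.

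The paper's route bypasses the essential-supremum detour entirely. It proves the countable chain condition above (Proposition~\ref{prop-zorn1}) and then applies Zorn's lemma (Lemma~\ref{lemma-zorn}) to obtain a \emph{maximal disjoint} countable family $\{A_i\}$ inside $\mathcal{P}:=\{A\in\mathcal{S}:T_\pi(A)>0,\ P(N_A(\pi)=\infty)=0\}$. Setting $F:=\bigcup_i A_i$ gives (i) at once (modify on the null event $\bigcup_i\{N_{A_i}(\pi)=\infty\}$), and maximality gives (ii) directly, with no residue to purge. Your $\psi$ plays no role: once you have the countable chain condition, a single Zorn step suffices; conversely, without it, no amount of iterating $\psi$-maxima will close the gap.
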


A cr-set satisfying the first condition in the definition of Poisson processes is said to have {\it independent increments}. It is known that for $\sigma$-finite cr-sets, hitting singletons with probability 0, the Poisson distribution is a necessary consequence of independent increments -- see e.g. \cite[Theorem 12.10]{kallenberg}. We are going to show that this statement also holds for constructive cr-sets.

\begin{theorem} \label{theo-inde-incre}Let $\Delta\in\mathcal{S}\otimes\mathcal{S}$ and let $\pi$ be a constructive cr-set with independent increments. If $P(\pi\cap \{x\}\neq \emptyset)=0$ for all $x\in S$, then $\pi$ is a Poisson process.
\end{theorem}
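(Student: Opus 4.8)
The plan is to cut $\pi$ along the set $F$ furnished by Theorem \ref{theo-decomp} into a $\sigma$-finite part, which the classical result handles, and a ``purely infinite'' part, which I want to show is a (possibly degenerate) Poisson process, and then to glue the two together using independent increments.

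\medskip

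\noindent\emph{Decomposition and the $\sigma$-finite part.} First I would apply Theorem \ref{theo-decomp} to obtain $F\in\mathcal{S}$ such that $\pi':=\pi\cap F$ has a version that is $\sigma$-finite on $\mathcal{S}$, while $\pi'':=\pi\cap F^{c}$ satisfies: for every $A\in\mathcal{S}$ either $P(N_A(\pi'')\ge1)=0$ or $P(N_A(\pi'')=\infty)>0$. Since $N_A(\pi\cap G)=N_{A\cap G}(\pi)$ and since $A_1\cap G,\dots,A_n\cap G$ are disjoint whenever $A_1,\dots,A_n$ are, both $\pi'$ and $\pi''$ inherit independent increments from $\pi$, and both hit every singleton with probability $0$. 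Applied to disjoint $A_1,\dots,A_r\subseteq F$ and $B_1,\dots,B_s\subseteq F^{c}$, the defining property of independent increments makes $N_{A_1}(\pi),\dots,N_{A_r}(\pi),N_{B_1}(\pi),\dots,N_{B_s}(\pi)$ jointly independent, so the finite-dimensional distributions of $(\pi',\pi'')$ factor and $\pi'$, $\pi''$ are independent. The $\sigma$-finite version of $\pi'$ is then a $\sigma$-finite simple point process with independent increments and no fixed atoms, hence a Poisson process by \cite[Theorem 12.10]{kallenberg} (using, if necessary, the reduction tools of Section 2, available since $\Delta\in\mathcal{S}\otimes\mathcal{S}$); call its intensity $\mu'$.

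\medskip

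\noindent\emph{The purely infinite part.} The heart of the proof is the claim that $N_A(\pi'')$ is a.s.\ equal to $0$ or a.s.\ equal to $\infty$ for every $A\in\mathcal{S}$. Granting this and setting $\mu''(A):=0$ in the first case and $\mu''(A):=\infty$ in the second, additivity of $N_{\bullet}(\pi'')$ over disjoint unions makes $\mu''$ a measure, each $N_A(\pi'')$ is Poisson with parameter $\mu''(A)$, so $\pi''$ is a Poisson process; then $N_A(\pi)=N_A(\pi')+N_A(\pi'')$ is a sum of two independent Poisson variables, and $N_{A_1}(\pi),\dots,N_{A_n}(\pi)$ are independent for disjoint $A_i$ (the families $\{A_i\cap F\}_i$ and $\{A_i\cap F^{c}\}_i$ being jointly disjoint), so $\pi$ is a Poisson process with intensity $\mu'+\mu''$. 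To establish the claim I would argue in two stages. First, rule out finite positive counts: if $P(N_B(\pi'')=k)>0$ for some $B\in\mathcal{S}$ and some \emph{minimal} finite $k\ge1$, use a countable family $(A_i)\subseteq\mathcal{S}$ separating the points of $S$ (whose existence is equivalent to $\Delta\in\mathcal{S}\otimes\mathcal{S}$) to partition $B$ into the finitely many atoms of $\sigma(A_1,\dots,A_m)$, a partition refining to the singletons as $m\to\infty$. On $\{N_B(\pi'')=k\}$ the $k$ distinct points of $\pi''\cap B$ eventually lie in $k$ distinct atoms, whereas minimality forces every atom to carry $0$ or at least $k$ points; for $k\ge2$ this is a contradiction once $m$ is large. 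For $k=1$, the selection of Theorem \ref{theo-constructive1} yields a random variable $Y$ with $\pi''\cap B=\{Y\}$ on $\{N_B(\pi'')=1\}$, and independence of increments over $B=C\sqcup(B\setminus C)$ gives $P(Y\in C,\,N_B(\pi'')=1)=P(N_C(\pi'')=1)\,P(N_{B\setminus C}(\pi'')=0)$; letting the atoms $C$ shrink to a point and comparing with the law of $Y$ is meant to produce a point $x$ with $P(N_{\{x\}}(\pi'')=1)>0$, contradicting the absence of fixed atoms. Second, upgrade $N_A(\pi'')\in\{0,\infty\}$ to an a.s.\ constant: if $P(N_A(\pi'')\ge1)>0$, I would split $A\cap F^{c}$ into disjoint pieces $T_j$ for which the independent events $\{N_{T_j}(\pi'')=\infty\}$ have divergent probability sum — the existence of such a splitting being extracted from the dichotomy of Theorem \ref{theo-decomp} applied to sub-pieces — so that $\{N_A(\pi'')=\infty\}$, which agrees a.s.\ with $\bigcup_j\{N_{T_j}(\pi'')=\infty\}$, has probability $1$ by Borel--Cantelli.

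\medskip

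\noindent\emph{Main obstacle.} I expect the difficulty to concentrate entirely in the purely infinite part $\pi''$, and there in the two passages to the limit. In the first, one must control $P(N_C(\pi'')=1)$ as the atoms $C$ collapse onto a point $x$, ensuring that the mass the law of $Y$ puts near $x$ does not escape in the limit; this is exactly where the dichotomy of Theorem \ref{theo-decomp} — every $\pi''\cap C$ is a.s.\ empty or infinite with positive probability — should be used to exclude that $\pi''$ merely \emph{accumulates} at $x$ without containing it. In the second, one must produce the splitting of $A\cap F^{c}$ into pieces with non-summable infinity-probabilities, which is where constructiveness (through Theorem \ref{theo-constructive1}) and the nowhere-$\sigma$-finiteness property of Theorem \ref{theo-decomp} must be combined carefully. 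Everything else — inheritance of independent increments and of diffuseness, independence of $\pi'$ and $\pi''$, the appeal to \cite[Theorem 12.10]{kallenberg}, and the final convolution of Poisson laws — should be routine.
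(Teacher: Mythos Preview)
Your route is quite different from the paper's, and the gaps you yourself flag are real and, as stated, unfilled. The paper never touches the decomposition $\pi=\pi'\cup\pi''$ at all. Instead it works entirely through the hitting function: independent increments make $\mu:=-\log(1-T_{\pi})$ a measure on $(S,\mathcal{S})$; Proposition~\ref{prop-zorn2} (proved via Theorem~\ref{theo-constructive1} and the Zorn-type Lemma~\ref{lemma-zorn}) shows $\mu=\sum_{n}\mu_{n}$ with each $\mu_{n}$ finite and, under the hypothesis, diffuse. Kingman's existence theorem then produces a constructive Poisson process $\tilde\pi$ with intensity $\mu$, and since $T_{\tilde\pi}=T_{\pi}$ on all of $\mathcal{S}$, Proposition~\ref{prop-central} gives $P_{\pi}=P_{\tilde\pi}$. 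That is the whole proof---three lines once Proposition~\ref{prop-zorn2} is available---and it never needs to know how $N_{A}(\pi)$ is distributed for any individual $A$.

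Your plan, by contrast, tries to identify the distribution of each $N_{A}(\pi'')$ directly, and both limiting steps you isolate do not go through as written. In the $k=1$ step, from $P(Y\in C,\,N_{B}(\pi'')=1)=P(N_{C}(\pi'')=1)\,P(N_{B\setminus C}(\pi'')=0)$ you cannot conclude $P(N_{\{x\}}(\pi'')=1)>0$ by letting $C\downarrow\{x\}$: the left side may tend to $0$ for every $x$ (the conditional law of $Y$ can be diffuse), and even when it does not, $P(N_{B\setminus C}(\pi'')=0)$ may tend to $0$ simultaneously, so no positive lower bound on $P(N_{C}(\pi'')=1)$ survives the limit. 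The dichotomy of Theorem~\ref{theo-decomp} does not help here: it tells you $P(N_{C}(\pi'')=\infty)>0$ whenever $C$ is non-null, but says nothing against $P(N_{C}(\pi'')=1)>0$ coexisting with it. In the Borel--Cantelli step, you need disjoint $T_{j}\subseteq A\cap F^{c}$ with $\sum_{j}P(N_{T_{j}}(\pi'')=\infty)=\infty$; nothing in Theorem~\ref{theo-decomp} or Theorem~\ref{theo-constructive1} produces such a splitting, and an ``atom'' for $T_{\pi''}$ (a non-null set all of whose measurable subsets are null or co-null) is not excluded by $T_{\pi''}(\{x\})=0$ alone. A separate, smaller worry: invoking \cite[Theorem~12.10]{kallenberg} for $\pi'$ presupposes a Borel state space, whereas here you only have $\Delta\in\mathcal{S}\otimes\mathcal{S}$; the paper avoids this by never leaving the hitting-function level.
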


Theorem \ref{theo-constructive1}, Theorem \ref{theo-decomp} and Theorem \ref{theo-inde-incre} are all proven in Section 6.

%%%%%%%%%%%%%%%%%%%%%%%%%%%%%%%%%%%%%%%%%%%%%%%%%%%%%%%%%%%%%%%%%%%%%%%%%%%%%%%%%%%%%%%%%%%%%%%%
%%%%%%%%%%%%%%%%%%%%%%%%%%%%%%%%%%%%%%%%%%%%%%%%%%%%%%%%%%%%%%%%%%%%%%%%%%%%%%%%%%%%%%%%%%%%%%%%
%%%%%%%%%%%%%%%%%%%%%%%%%%%%%%%%%%%%%%%%%%%%%%%%%%%%%%%%%%%%%%%%%%%%%%%%%%%%%%%%%%%%%%%%%%%%%%%%
%%%%%%%%%%%%%%%%%%%%%%%%%%%%%%%%%%%%%%%%%%%%%%%%%%%%%%%%%%%%%%%%%%%%%%%%%%%%%%%%%%%%%%%%%%%%%%%%

\subsection{Spaces with Measurable Diagonal and Dissecting Systems} \label{tools}
\setcounter{theorem}{0}

In this section we discuss some techniques that we are going to use throughout this paper.

For $x,y\in S$ a collection $\mathcal{E}$ of subsets of $S$ is said to {\it separate} $x$ and $y$ if there exists some $A\in \mathcal{E}$ with $1_{A}(x)\neq 1_{A}(y)$. We say $\mathcal{E}$ {\it separates points} of a subset $A\subseteq S$ if for any two distinct points $x,y\in A$ the collection $\mathcal{E}$ separates $x$ and $y$. The next proposition is part of \cite[Theorem 1]{dravecky} and \cite[Remark 3]{dravecky}.

\begin{prop} \label{prop2-1}
Let $(S,\mathcal{S})$ be any measurable space. The following statements are equivalent.
\begin{satz-liste}
\item[a)] $\Delta\in\mathcal{S}\otimes\mathcal{S}$.
\item[b)] There is a countable collection $\mathcal{E}\subseteq \mathcal{S}$ separating points of $S$.
\item[c)] There is a metric $d:S\times S\to [0,\infty)$, such that $(S,d)$ is separable and $\mathcal{S}$ contains the Borel-$\sigma$-field of $(S,d)$.
\end{satz-liste}
\end{prop}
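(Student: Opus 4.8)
The plan is to prove the cycle of implications a) $\Rightarrow$ b) $\Rightarrow$ c) $\Rightarrow$ a). For a) $\Rightarrow$ b) I would first pass to a countably generated sub-$\sigma$-field: the collection of all $D\in\mathcal{S}\otimes\mathcal{S}$ for which there is a \emph{countable} $\mathcal{C}\subseteq\mathcal{S}$ with $D\in\sigma(\mathcal{C})\otimes\sigma(\mathcal{C})$ forms a $\sigma$-field — closure under countable unions uses that a countable union of countable families is countable — and it contains every measurable rectangle, hence equals $\mathcal{S}\otimes\mathcal{S}$. So there is a countable $\mathcal{E}=\{E_{n}:n\in\N\}\subseteq\mathcal{S}$ with $\Delta\in\sigma(\mathcal{E})\otimes\sigma(\mathcal{E})$. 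I then claim this $\mathcal{E}$ separates points. Let $f:S\to\{0,1\}^{\N}$, $f(x):=(1_{E_{n}}(x))_{n}$; then $\sigma(\mathcal{E})\otimes\sigma(\mathcal{E})$ consists precisely of the $(f\times f)$-preimages of the product-measurable subsets of $\{0,1\}^{\N}\times\{0,1\}^{\N}$. If some $x\neq y$ were not separated by $\mathcal{E}$, then $f(x)=f(y)$, so $(f\times f)(x,x)=(f\times f)(x,y)$, and consequently $(x,x)\in\Delta$ would entail $(x,y)\in\Delta$, which is absurd. Hence $\mathcal{E}$ separates points.

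For b) $\Rightarrow$ c), enumerate a countable separating family $\mathcal{E}=\{E_{n}:n\in\N\}\subseteq\mathcal{S}$ and set
\[ d(x,y):=\sum_{n\in\N}2^{-n}\,|1_{E_{n}}(x)-1_{E_{n}}(y)|. \]
This is a pseudometric on $S$, and it is a genuine metric precisely because $\mathcal{E}$ separates points. The map $x\mapsto(1_{E_{n}}(x))_{n}$ embeds $(S,d)$ isometrically into the compact metric space $\{0,1\}^{\N}$, so $(S,d)$ is separable. Finally, each open ball $B(x_{0},r)=\bigl\{x\in S:\sum_{n}2^{-n}|1_{E_{n}}(x)-1_{E_{n}}(x_{0})|<r\bigr\}$ lies in $\mathcal{S}$, because $x\mapsto\sum_{n}2^{-n}|1_{E_{n}}(x)-1_{E_{n}}(x_{0})|$ is $\mathcal{S}$-measurable as a pointwise limit of $\mathcal{S}$-measurable functions. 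Since $(S,d)$ is separable metric, $\mathcal{B}(S,d)$ is generated by countably many such balls — centres in a countable dense set, rational radii — so $\mathcal{B}(S,d)\subseteq\mathcal{S}$.

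For c) $\Rightarrow$ a), fix a $d$-dense sequence $(q_{i})_{i\in\N}$ in $S$. Then
\[ \Delta=\bigcap_{m\in\N}\bigcup_{i\in\N}B(q_{i},1/m)\times B(q_{i},1/m), \]
since $x=y$ holds iff for every $m$ there is some $q_{i}$ within distance $1/m$ of both $x$ and $y$. Each ball $B(q_{i},1/m)$ is $d$-open, hence belongs to $\mathcal{B}(S,d)\subseteq\mathcal{S}$, and $B\times B\in\mathcal{S}\otimes\mathcal{S}$ for every $B\in\mathcal{S}$; therefore $\Delta$ is a countable intersection of countable unions of measurable rectangles and lies in $\mathcal{S}\otimes\mathcal{S}$.

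The only genuinely delicate point is the reduction in a) $\Rightarrow$ b): with no topological or cardinality assumption on $(S,\mathcal{S})$, one must first locate a \emph{countable} subfamily through which $\Delta$ already factors, and then read off separation of points from the mere membership $\Delta\in\sigma(\mathcal{E})\otimes\sigma(\mathcal{E})$. Once that is in place, the implications b) $\Rightarrow$ c) and c) $\Rightarrow$ a) are routine manipulations of the explicit metric $d$.
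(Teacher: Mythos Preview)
Your proof is correct. The paper itself does not prove this proposition but simply cites it from Draveck\'y (Theorem~1 and Remark~3 there), so there is no in-paper argument to compare against. Your cycle a) $\Rightarrow$ b) $\Rightarrow$ c) $\Rightarrow$ a) is the natural one, and the two steps in a) $\Rightarrow$ b) --- first passing to a countable $\mathcal{E}\subseteq\mathcal{S}$ with $\Delta\in\sigma(\mathcal{E})\otimes\sigma(\mathcal{E})$, then reading off point-separation via the encoding $f:S\to\{0,1\}^{\N}$ --- are exactly the product-space incarnations of the two measure-theoretic lemmata the paper records later in Section~2 (every element of a generated $\sigma$-field lies already in a countably generated sub-$\sigma$-field; non-separation by a generator propagates to the whole $\sigma$-field). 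The explicit metric in b) $\Rightarrow$ c) and the ball formula for $\Delta$ in c) $\Rightarrow$ a) are standard and need no further comment.
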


In particular, $\Delta\in\mathcal{S}\otimes\mathcal{S}$ implies $\{x\}\in\mathcal{S}$ for all $x\in S$ and a separable metric space $S$ with Borel-$\sigma$-field $\mathcal{S}$ always has a measurable diagonal. Part b) of Proposition \ref{prop2-1} will be the most useful characterisation of $\Delta\in\mathcal{S}\otimes\mathcal{S}$ for us. The fact that $\mathcal{S}$ is a $\sigma$-field is often not as important as the existence of a countable subsystem separating points of $S$. Note that $\Delta\in\mathcal{S}\otimes\mathcal{S}$ is a requirement on the richness of $\mathcal{S}$ -- provided the set $S$ permits the existence of a $\sigma$-field with measurable diagonal at all.

Another very important concept for us are dissecting systems. The following definition is due to Leadbetter \cite{leadbetter}: 

\smallskip
\begin{definition} \label{defi-dissys}
Let $A\subseteq S$ and for all $n\in\N$ let $A_{n,k}\subseteq S$, $k\in I_{n}\subseteq\N$, be a collection of subsets. Then $(\{A_{n,k}\,|\,k\in I_{n}\})_{n\in\N}$ is called a \emph{dissecting system} for $A$, if the following conditions hold:
\begin{enumerate}
 \item[(i)] For all $n\in\N$ the sets in $\{ A_{n,k}\,|\, k\in I_{n}\}$ are pairwise disjoint and satisfy $\bigcup_{k\in I_{n}}A_{n,k}\subseteq \bigcup_{k\in I_{n+1}}A_{n+1,k}\subseteq A$, as well as $\bigcup_{n\in\N,k\in I_{n}}A_{n,k}=A$.
\item[(ii)] For any two distinct points $x,y\in A$ there exists $n(x,y)\in \N$ such that $\{ A_{n,k}\,|\, k\in I_{n}\}$ separates $x$ and $y$ for all $n\geq n(x,y)$.
\end{enumerate}
\end{definition}

In fact, measurability of the diagonal is equivalent to the existence of a measurable dissecting system for $S$, as essentially shown in \cite[Theorem 1]{dravecky}. The following lemma will help us to make use of dissecting systems. It is also due to Leadbetter \cite[Lemma 2.1]{leadbetter}. Note that in \cite[Lemma 2.1]{leadbetter} convergence is stated almost surely, although it holds pointwise as well.

\begin{lemma} \label{lemma-leadbetter}Let $(\{A_{n,k}\,|\, k\in I_{n}\})_{n\in\N}$ be a dissecting system for $A\subseteq S$. Then for all $M\in C(S)$ holds \begin{equation}\label{eq-leadbetter} N_{A}(M)=\lim_{n\to\infty}\sum_{k\in I_{n}}1_{\{N_{A_{n,k}}\neq 0\}}(M).\end{equation} 

\end{lemma}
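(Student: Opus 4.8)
The plan is to fix $M\in C(S)$ and compare both sides of (\ref{eq-leadbetter}) with the cardinal $m:=N_{A}(M)=|A\cap M|\in\N_{0}\cup\{\infty\}$, which is well defined since $A\cap M$ is a subset of the countable set $M$. Write $s_{n}:=\sum_{k\in I_{n}}1_{\{N_{A_{n,k}}\neq 0\}}(M)$, an element of $\N_{0}\cup\{\infty\}$; by definition $s_{n}$ is simply the number of level-$n$ cells $A_{n,k}$ that meet $M$. I would prove $\limsup_{n}s_{n}\le m$ and $\liminf_{n}s_{n}\ge m$; since always $\liminf_{n}s_{n}\le\limsup_{n}s_{n}$, these two bounds force $\lim_{n}s_{n}=m$, which is exactly (\ref{eq-leadbetter}).

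For the upper bound I would invoke condition (i) of Definition \ref{defi-dissys}: every cell satisfies $A_{n,k}\subseteq A$, hence $A_{n,k}\cap M\subseteq A\cap M$, and the cells at a fixed level $n$ are pairwise disjoint. Picking, for each $k\in I_{n}$ with $A_{n,k}\cap M\neq\emptyset$, a point of $A_{n,k}\cap M$ yields an injection from $\{k\in I_{n}\,|\,A_{n,k}\cap M\neq\emptyset\}$ into $A\cap M$. Therefore $s_{n}\le|A\cap M|=m$ for every $n$ (a vacuous inequality when $m=\infty$), and in particular $\limsup_{n}s_{n}\le m$.

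For the lower bound I would fix an arbitrary finite $m'\in\N_{0}$ with $m'\le m$ and choose distinct points $x_{1},\dots,x_{m'}\in A\cap M$. Since $\bigcup_{n,k}A_{n,k}=A$ and $\bigcup_{k\in I_{n}}A_{n,k}\subseteq\bigcup_{k\in I_{n+1}}A_{n+1,k}$, each $x_{i}$ lies in a level-$n$ cell (unique, by disjointness) for every $n$ beyond some index $N_{i}$; and by condition (ii) the points $x_{i}$ and $x_{j}$ lie in different level-$n$ cells for every $n$ beyond some index $n(x_{i},x_{j})$. Letting $n_{0}$ be the maximum of the finitely many indices $N_{1},\dots,N_{m'}$ and $n(x_{i},x_{j})$, for every $n\ge n_{0}$ the points $x_{1},\dots,x_{m'}$ occupy $m'$ pairwise distinct level-$n$ cells, each of which meets $M$; hence $s_{n}\ge m'$ for all $n\ge n_{0}$. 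Thus $\liminf_{n}s_{n}\ge m'$ for every finite $m'\le m$, so $\liminf_{n}s_{n}\ge m$, and combining with the previous paragraph finishes the proof.

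The only point that needs a little care is the bookkeeping of the ``eventually'' quantifiers in the lower bound: once $x_{i}$ lies in some level-$n$ cell it still lies in a level-$n'$ cell for every $n'\ge n$ (this is precisely the nestedness of the unions in (i)), and once $\{A_{n,k}\,|\,k\in I_{n}\}$ separates $x_{i}$ and $x_{j}$ it does so for all larger $n$ (this is exactly how (ii) is phrased), so that the single threshold $n_{0}$ really works simultaneously for all the relevant pairs. Apart from this there is no genuine obstacle; the statement is elementary once the definition of a dissecting system is unpacked.
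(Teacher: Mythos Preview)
Your argument is correct: the upper bound $s_{n}\le m$ follows from disjointness of the level-$n$ cells and $A_{n,k}\subseteq A$, while the lower bound uses the nesting of the unions in (i) together with the ``for all $n\ge n(x,y)$'' clause in (ii) exactly as you describe, and the bookkeeping with the single threshold $n_{0}$ is sound. The paper itself does not supply a proof of this lemma but simply attributes it to Leadbetter \cite[Lemma~2.1]{leadbetter} (noting that the convergence there is stated almost surely, whereas it in fact holds pointwise), so there is no in-paper argument to compare against; your proof is precisely the elementary pointwise verification one would expect.
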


\begin{definition}\label{defi-selfdis}
A family $\emptyset\neq\mathcal{E}\subseteq \mathcal{P}(S)$ is called \emph{self-dissecting} if for all $A\in\mathcal{E}$ there exists a dissecting system $(\{A_{n,k}\,|\, k\in I_{n}\})_{n\in\N}$ with $\{A_{n,k}\,|\,k\in I_{n}\}\subseteq \mathcal{E}$ for all $n\in\N$.

\end{definition}

Lemma \ref{lemma-leadbetter} and Definition \ref{defi-selfdis} are going to be important in the next section, when we study ``hit or miss'' $\sigma$-fields.

Next, we remind of two lemmata from measure theory that we are going to use a couple of times in this paper. The first one can be found basically in \cite[\S 5 Theorem D.]{Halmos}. Unfortunately, we don't have any reference for the second lemma. Let $\mathfrak{X}\neq\emptyset$ be any set.

\begin{lemma} \label{maß-lemma1}
Let $\emptyset\neq\mathcal{E}\subseteq\mathcal{P}(\mathfrak{X})$ be a collection of subsets. For each $A\in\sigma(\mathcal{E})$ there exist countable many sets $A_{n}\in\mathcal{E}, n\in\N$, such that $A\in\sigma(\{A_{n}\,|\,n\in\N\})$.
\end{lemma}

\begin{lemma} \label{maß-lemma2}
Let $\emptyset\neq\mathcal{E}\subseteq\mathcal{P}(\mathfrak{X})$ be a collection of subsets and let $x,y\in\mathfrak{X}$. If $1_{A}(x)=1_{A}(y)$ holds for all $A\in \mathcal{E}$, then it holds already for all $A\in\sigma(\mathcal{E})$.
\end{lemma}

\begin{proof}[\normalfont\bfseries Proof.] The collection $\{ A\in\sigma(\mathcal{E})\,|\,1_{A}(x)=1_{A}(y)\}$ is a $\sigma$-field.
\end{proof}

Finally in this section, we state some propositions that provide us with self-dissecting set-systems.

\begin{prop} \label{prop-selfdis1}
Let $\mathcal{H}\subseteq\mathcal{P}(S)$ be a semiring. If there is a countable collection $\mathcal{E}\subseteq \mathcal{H}$ separating points of $S$, then $\mathcal{H}$ is self-dissecting.
 
\end{prop}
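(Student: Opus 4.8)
The plan is to construct a dissecting system for each $A\in\mathcal{H}$ using only sets from $\mathcal{H}$, exploiting the countable point-separating family $\mathcal{E}$. Write $\mathcal{E}=\{E_1,E_2,\ldots\}$. For a fixed $A\in\mathcal{H}$, the natural idea is to dissect $A$ at stage $n$ according to the ``atoms'' of the finite partition generated by $E_1,\ldots,E_n$ intersected with $A$, that is, to consider sets of the form $A\cap E_1^{\varepsilon_1}\cap\cdots\cap E_n^{\varepsilon_n}$ with $\varepsilon_i\in\{0,1\}$ (where $E^1=E$ and $E^0=S\setminus E$). However, such sets need not lie in $\mathcal{H}$, since a semiring is not closed under complements. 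The remedy is the standard fact that in a semiring, the difference of two sets is a finite disjoint union of members of the semiring: first I would show by induction on $n$ that for each $A\in\mathcal{H}$ and each $n$, the trace $\sigma$-algebra consideration can be replaced by a finite partition $\{A_{n,k}\mid k\in I_n\}\subseteq\mathcal{H}$ of $A$ that refines the partition at stage $n-1$ and such that the partition at stage $n$ separates any two points of $A$ that $E_1,\ldots,E_n$ separate. Concretely, pass from stage $n-1$ to stage $n$ by splitting each block $A_{n-1,k}$ into $A_{n-1,k}\cap E_n$ and $A_{n-1,k}\setminus E_n$; the first is in $\mathcal{H}$ (semiring, $A_{n-1,k},E_n\in\mathcal{H}$ — here one needs $E_n\cap A_{n-1,k}\in\mathcal H$, which holds because $\mathcal H$ is a semiring hence $\cap$-stable), and the second is, by the semiring difference property, a finite disjoint union of members of $\mathcal{H}$; collect all these pieces to form $\{A_{n,k}\mid k\in I_n\}$.

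Once this refinement scheme is set up, I would verify the two axioms of Definition \ref{defi-dissys}. Condition (i): by construction the stage-$n$ blocks are pairwise disjoint, each is contained in $A$, and since every block at stage $n$ sits inside a block at stage $n-1$ we have $\bigcup_{k\in I_n}A_{n,k}=A$ for every $n$ (the union is exactly $A$ at each stage because we partition the whole of $A$), so in particular the monotonicity $\bigcup_k A_{n,k}\subseteq\bigcup_k A_{n+1,k}$ and the exhaustion $\bigcup_{n,k}A_{n,k}=A$ are immediate; all blocks lie in $\mathcal{H}\subseteq\mathcal{P}(S)$ as required. Condition (ii): given distinct $x,y\in A$, since $\mathcal{E}$ separates points of $S$ there is some index $m$ with $1_{E_m}(x)\neq 1_{E_m}(y)$; then for every $n\geq m$, the points $x$ and $y$ lie in different blocks at stage $n$ because the stage-$n$ partition refines the split by $E_m$, so the stage-$n$ family separates $x$ and $y$. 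Hence we may take $n(x,y)=m$.

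The one genuinely delicate point — and the main obstacle — is keeping all the pieces inside $\mathcal{H}$ while still getting a bona fide partition of $A$ at each stage: the semiring difference lemma produces a \emph{finite disjoint} decomposition of $A_{n-1,k}\setminus E_n$ into $\mathcal{H}$-sets, and one must check that concatenating these across all $k\in I_{n-1}$ and all stages still yields disjoint families with the nesting property. This is routine but needs the disjointness of the stage-$(n-1)$ blocks to propagate, together with the observation that $I_{n-1}$ (hence each $I_n$) is finite, so $\bigcup_{n}I_n$ can be taken as a subset of $\N$ as demanded by Definition \ref{defi-dissys}. I would also remark at the outset that if $A\in\mathcal{H}$ is empty or a singleton the dissecting system is trivial, and that the countability of $\mathcal{E}$ is exactly what makes the stage index range over $\N$. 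Putting these observations together establishes that $\mathcal{H}$ is self-dissecting in the sense of Definition \ref{defi-selfdis}.
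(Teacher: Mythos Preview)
Your proposal is correct and follows essentially the same approach as the paper: both arguments build a nested sequence of finite partitions of $A$ inside $\mathcal{H}$ by, at stage $n$, splitting each block $A_{n-1,k}$ into $A_{n-1,k}\cap E_n\in\mathcal{H}$ and a finite disjoint $\mathcal{H}$-decomposition of $A_{n-1,k}\setminus E_n$ furnished by the semiring property, and then verify conditions (i) and (ii) of Definition~\ref{defi-dissys} from the nesting and the point-separating property of $\mathcal{E}$.
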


\begin{proof}[\normalfont\bfseries Proof.]
  Let $A\in\mathcal{H}$. We are going to construct inductively a nested sequence of partitions for $A$ -- an idea that can be found in the proof of \cite[Theorem 1]{dravecky} or in the proof of \cite[Proposition A2.1.V]{daley}. For this purpose, let $E_{1},E_{2},\ldots$ be the elements of $\mathcal{E}$. Since $\mathcal{H}$ is a semiring, we have $A\cap E_{1}\in\mathcal{H}$ and there exist $l\in\N$ and pairwise disjoint $H_{1},\ldots,H_{l}\in\mathcal{H}$ with $A\setminus E_{1}=H_{1}\cup\cdots\cup H_{l}$. Define the first partition $\{A_{1,k}\,|\,k\in I_{1}\}$ to be $\{A\cap E_{1},H_{1},\ldots, H_{l}\}$.\\ Given $\{A_{n,k}\,|\,k\in I_{n}\}\subseteq \mathcal{H}$ for $n\in\N$, we proceed with each $A_{n,k}$ in the same fashion: For all $k\in I_{n}$ we have $A_{n,k}\cap E_{n+1}\in\mathcal{H}$ and there exist $l_{k}\in\N$ and pairwise disjoint $H_{k,1},\ldots,H_{k,l_{k}}\in\mathcal{H}$ with $A_{n,k}\setminus E_{n+1}=H_{k,1}\cup\cdots\cup H_{k,l_{k}}$. Define $\{A_{n+1,k}\,|\,k\in I_{n+1}\}$ to be $\bigcup_{k\in I_{n}}\{A_{n,k}\cap E_{n+1},H_{k,1},\ldots, H_{k,l_{k}}\}$.\\
It remains to check that $(\{A_{n,k}\,|\, k\in I_{n}\})_{n\in \N}$ is a dissecting system for $A$.
Since the sequence of partitions is nested, if $\{A_{n,k}\,|\,k\in I_{n}\}$ separates two points in $A$, then so does $\{A_{n+1,k}\,|\,k\in I_{n+1}\}$. Therefore and because $\mathcal{E}$ is separating points of $A$, we conclude that condition (ii) in Definition \ref{defi-dissys} is satisfied. Condition (i) is easy to check in this case.
\end{proof}

\begin{prop}\label{prop-selfdis2} Let $(S,\mathcal{S})$ be a measurable space with $\Delta\in\mathcal{S}\otimes\mathcal{S}$ and let $\mathcal{H}\subseteq\mathcal{S}$ be a semiring generating $\mathcal{S}$. Then $\mathcal{H}$ is self-dissecting.

\end{prop}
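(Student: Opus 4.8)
The plan is to reduce the statement to Proposition~\ref{prop-selfdis1}: once we have a \emph{countable} collection $\mathcal{E}\subseteq\mathcal{H}$ separating the points of $S$, that proposition immediately yields that $\mathcal{H}$ is self-dissecting. So the whole task is to construct such an $\mathcal{E}$ inside the given semiring $\mathcal{H}$.

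First I would invoke the hypothesis $\Delta\in\mathcal{S}\otimes\mathcal{S}$: by Proposition~\ref{prop2-1} there is a countable family $\mathcal{D}=\{D_{1},D_{2},\ldots\}\subseteq\mathcal{S}$ separating points of $S$. Since each $D_{i}$ lies in $\mathcal{S}=\sigma(\mathcal{H})$, the first of the two measure-theoretic lemmata recalled above provides a countable $\mathcal{E}_{i}\subseteq\mathcal{H}$ with $D_{i}\in\sigma(\mathcal{E}_{i})$. Then I would set $\mathcal{E}:=\bigcup_{i\in\N}\mathcal{E}_{i}$, which is a countable subfamily of $\mathcal{H}$ (a countable union of countable sets) satisfying $\mathcal{D}\subseteq\sigma(\mathcal{E})$.

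It would then remain to check that $\mathcal{E}$ separates points of $S$. If it did not, there would be distinct $x,y\in S$ with $1_{A}(x)=1_{A}(y)$ for every $A\in\mathcal{E}$; by the second measure-theoretic lemma above, this equality would persist for every $A\in\sigma(\mathcal{E})$, in particular for every $D_{i}\in\mathcal{D}$, contradicting the choice of $\mathcal{D}$. Hence $\mathcal{E}\subseteq\mathcal{H}$ is a countable point-separating family, and Proposition~\ref{prop-selfdis1} completes the argument.

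I expect no real obstacle here; the only delicate point is keeping the separating family countable after pulling it back into $\mathcal{H}$, which is exactly what the lemma on $\sigma(\mathcal{H})$-measurable sets guarantees. (If $S$ has at most one point the claim is trivial; otherwise a point-separating family is automatically nonempty, so Proposition~\ref{prop-selfdis1} is genuinely applicable.)
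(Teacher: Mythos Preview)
Your proposal is correct and follows essentially the same route as the paper: invoke Proposition~\ref{prop2-1} to get a countable separating family in $\mathcal{S}$, pull each of its members back into a countable subfamily of $\mathcal{H}$ via Lemma~\ref{ma�-lemma1}, take the union, verify point-separation by the contradiction argument using Lemma~\ref{ma�-lemma2}, and conclude with Proposition~\ref{prop-selfdis1}. The only difference is cosmetic (your $D_i$ versus the paper's $A_m$, and you spell out the contradiction step slightly more explicitly).
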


\begin{proof}[\normalfont\bfseries Proof.]
By Proposition \ref{prop2-1}, there exists a family of sets $\{A_{m}\,|\,m\in\N\}\subseteq\mathcal{S}$ separating points of $S$. Since $\mathcal{S}=\sigma(\mathcal{H})$, we can apply Lemma \ref{maß-lemma1} and receive a countable $\mathcal{E}_{m}\subseteq \mathcal{H}$ with $A_{m}\in\sigma(\mathcal{E}_{m})$ for all $m\in\N$.\\
Define $\mathcal{E}:=\bigcup_{m\in\N}\mathcal{E}_{m}$. Then $\mathcal{E}$ is countable and satisfies $\mathcal{E}\subseteq\mathcal{H}$. Since $\{A_{m}\,|\,m\in\N\}\subseteq \sigma(\mathcal{E})$ and since $\{A_{m}\,|\,m\in\N\}$ is separating points of $S$, we conclude by Lemma \ref{maß-lemma2} and by a contradiction argument that $\mathcal{E}$ is separating points of $S$ as well. Therefore Proposition \ref{prop-selfdis2} follows from Proposition \ref{prop-selfdis1}.\end{proof}

\begin{prop} \label{prop-selfdis3}
 Let $\mathcal{E}\subseteq\mathcal{P}(S)$ be a collection of subsets. If $\mathcal{E}$ is $\cap$-stable and contains a dissecting system for $S$, then $\mathcal{E}$ is self-dissecting.
\end{prop}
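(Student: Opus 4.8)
The plan is to produce a dissecting system for an arbitrary $A\in\mathcal{E}$ by intersecting the ambient dissecting system for $S$ with $A$, using $\cap$-stability to stay inside $\mathcal{E}$. Concretely, let $(\{D_{n,k}\mid k\in J_{n}\})_{n\in\N}$ be a dissecting system for $S$ with $D_{n,k}\in\mathcal{E}$ for all $n$ and $k$ (such a system exists by hypothesis), fix $A\in\mathcal{E}$, and set $A_{n,k}:=A\cap D_{n,k}$ and $I_{n}:=J_{n}$. Since $\mathcal{E}$ is $\cap$-stable, every $A_{n,k}$ again lies in $\mathcal{E}$, so the only thing left is to verify that $(\{A_{n,k}\mid k\in I_{n}\})_{n\in\N}$ meets conditions (i) and (ii) of Definition \ref{defi-dissys}; then $A$ has a dissecting system contained in $\mathcal{E}$, and as $A\in\mathcal{E}$ was arbitrary, $\mathcal{E}$ is self-dissecting.

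For condition (i): pairwise disjointness of $\{A_{n,k}\mid k\in I_{n}\}$ is inherited from that of $\{D_{n,k}\mid k\in J_{n}\}$; intersecting the chain $\bigcup_{k\in J_{n}}D_{n,k}\subseteq\bigcup_{k\in J_{n+1}}D_{n+1,k}\subseteq S$ with $A$ yields $\bigcup_{k\in I_{n}}A_{n,k}\subseteq\bigcup_{k\in I_{n+1}}A_{n+1,k}\subseteq A$; and $\bigcup_{n,k}A_{n,k}=A\cap\bigcup_{n,k}D_{n,k}=A\cap S=A$. (Empty members among the $A_{n,k}$ are harmless; alternatively one could delete them and reindex.) For condition (ii): given distinct $x,y\in A$, they are distinct points of $S$, so there is $n(x,y)\in\N$ such that for every $n\ge n(x,y)$ some $D_{n,k}$ with $k\in J_{n}$ satisfies $1_{D_{n,k}}(x)\neq 1_{D_{n,k}}(y)$, i.e. exactly one of $x,y$ lies in $D_{n,k}$ — say $x\in D_{n,k}$ and $y\notin D_{n,k}$. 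Since $x\in A$ we get $x\in A\cap D_{n,k}=A_{n,k}$, while $y\notin D_{n,k}$ gives $y\notin A_{n,k}$; hence $A_{n,k}$ separates $x$ and $y$. So the same threshold $n(x,y)$ works for $\{A_{n,k}\mid k\in I_{n}\}$, establishing (ii).

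The argument is essentially routine, and there is no real obstacle; the only point deserving a little care is (ii), where one should note that ``separation'' only demands a single set distinguishing the two points (so one need not assume both $x$ and $y$ are covered by the level-$n$ partition), and that intersecting with $A$ — which contains both $x$ and $y$ — can never destroy a set that separates them. With (i) and (ii) in hand the proof is complete.
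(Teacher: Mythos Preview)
Your proof is correct and is exactly the straightforward verification the paper has in mind; the paper itself omits the argument, stating only that the proposition ``is easy to check'' and giving no proof. Intersecting the given dissecting system for $S$ with $A$ and invoking $\cap$-stability is precisely the intended route, and your check of conditions (i) and (ii) is clean.
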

Proposition \ref{prop-selfdis3} is easy to check. We will not give a proof.

%%%%%%%%%%%%%%%%%%%%%%%%%%%%%%%%%%%%%%%%%%%%%%%%%%%%%%%%%%%%%%%%%%%%%%%%%%%%%%%%%%%%%%%%%%%%%%%%
%%%%%%%%%%%%%%%%%%%%%%%%%%%%%%%%%%%%%%%%%%%%%%%%%%%%%%%%%%%%%%%%%%%%%%%%%%%%%%%%%%%%%%%%%%%%%%%%
%%%%%%%%%%%%%%%%%%%%%%%%%%%%%%%%%%%%%%%%%%%%%%%%%%%%%%%%%%%%%%%%%%%%%%%%%%%%%%%%%%%%%%%%%%%%%%%%
%%%%%%%%%%%%%%%%%%%%%%%%%%%%%%%%%%%%%%%%%%%%%%%%%%%%%%%%%%%%%%%%%%%%%%%%%%%%%%%%%%%%%%%%%%%%%%%%

\subsection{Generators of Counting $\sigma$-Fields in $C(S)$ and ``Hit or Miss'' $\sigma$-Fields} \label{generators1}
\setcounter{theorem}{0}

Theorems on uniqueness in law for cr-sets or simple point processes can often be explained by finding a $\cap$-stable generator for the $\sigma$-field in the ``the right'' measurable space and then applying the uniqueness theorem of measure theory \cite[Lemma 1.17]{kallenberg}. For example, this method is used in the proof of Theorem \ref{sigma-theorem} in Section \ref{generators2}. However, in this section we will try to find limitations of that line of argumentation, when working in $C(S)$. Therefore we are going to study generators of $\sigma$-fields in $C(S)$ that are of the form \[ \mathcal{C}(\mathcal{T}):=\sigma(N_{A}\,|\,A\in\mathcal{T}).\] In contrast to Section 1, we allow $\emptyset\neq\mathcal{T}\subseteq\mathcal{P}(S)$ here to be an arbitrary collection of ``test sets'', on that we would like to be able to count points. We call $\mathcal{C}(\mathcal{T})$ the \emph{counting $\sigma$-field} of $\mathcal{T}$.

In order to study $\mathcal{C}(\mathcal{T})$ and also in general, the concept of ``hit or miss'' $\sigma$-fields  plays an important role. We define the \emph{``hit or miss'' $\sigma$-field} of $\mathcal{T}$ in $C(S)$ to be 
\[ \mathfrak{H}(\mathcal{T}):=\sigma(\{N_{A}\neq 0\}\,|\,A\in\mathcal{T}).\] 
According to Kendall \cite{kendall} ``hit or miss'' $\sigma$-fields were introduced to stochastic geometry by Matheron \cite{matheron}.

\begin{theorem} \label{theo-hit}
 If $\mathcal{E}\subseteq \mathcal{P}(S)$ is a self-dissecting collection of sets, then
\[\mathfrak{H}(\mathcal{E})= \mathcal{C}(\mathcal{E}).\]
\end{theorem}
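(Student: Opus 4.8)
The plan is to establish the two inclusions separately. One direction, $\mathfrak{H}(\mathcal{E}) \subseteq \mathcal{C}(\mathcal{E})$, is immediate and I would deal with it first: for every $A \in \mathcal{E}$ the set $\{N_A \neq 0\}$ equals $N_A^{-1}\big((\N_0 \cup \{\infty\}) \setminus \{0\}\big)$, hence lies in $\sigma(N_A) \subseteq \mathcal{C}(\mathcal{E})$; passing to the generated $\sigma$-field gives $\mathfrak{H}(\mathcal{E}) \subseteq \mathcal{C}(\mathcal{E})$.

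For the reverse inclusion $\mathcal{C}(\mathcal{E}) \subseteq \mathfrak{H}(\mathcal{E})$ it suffices to show that $N_A \colon C(S) \to \N_0 \cup \{\infty\}$ is $\mathfrak{H}(\mathcal{E})$-measurable for each fixed $A \in \mathcal{E}$. Here I would exploit the self-dissecting hypothesis: pick a dissecting system $(\{A_{n,k} \mid k \in I_n\})_{n \in \N}$ for $A$ with $\{A_{n,k} \mid k \in I_n\} \subseteq \mathcal{E}$ for every $n$. Then Lemma \ref{lemma-leadbetter} yields, for every $M \in C(S)$,
\[ N_A(M) = \lim_{n\to\infty} \sum_{k \in I_n} 1_{\{N_{A_{n,k}} \neq 0\}}(M). \]
Each indicator $1_{\{N_{A_{n,k}} \neq 0\}}$ is $\mathfrak{H}(\mathcal{E})$-measurable by the very definition of $\mathfrak{H}(\mathcal{E})$, since $A_{n,k} \in \mathcal{E}$. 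For fixed $n$ the index set $I_n$ is at most countable, so $\sum_{k \in I_n} 1_{\{N_{A_{n,k}} \neq 0\}}$ is a countable sum of nonnegative measurable functions and is therefore itself $\mathfrak{H}(\mathcal{E})$-measurable as a map into $\N_0 \cup \{\infty\}$. Finally $N_A$ is the pointwise limit of these maps, hence $\mathfrak{H}(\mathcal{E})$-measurable. Consequently $\mathcal{C}(\mathcal{E}) = \sigma(N_A \mid A \in \mathcal{E}) \subseteq \mathfrak{H}(\mathcal{E})$, and combining with the first inclusion completes the proof.

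I do not expect a genuine obstacle here: the real content is already packaged into Leadbetter's Lemma \ref{lemma-leadbetter} together with the self-dissecting hypothesis, which is precisely what guarantees that the approximating sets $A_{n,k}$ can be taken inside $\mathcal{E}$ itself. The only points requiring a moment's care are bookkeeping ones — that the index sets $I_n$ may be countably infinite, so one needs measurability of a countable sum of indicators, and that $N_A$ is $\overline{\nz}_0$-valued, so one uses that pointwise limits of measurable maps into $\N_0 \cup \{\infty\}$ remain measurable.
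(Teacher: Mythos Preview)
Your proof is correct and follows essentially the same line as the paper's: the easy inclusion $\mathfrak{H}(\mathcal{E})\subseteq\mathcal{C}(\mathcal{E})$ is noted, and for the reverse you fix $A\in\mathcal{E}$, take a dissecting system for $A$ inside $\mathcal{E}$, and invoke Lemma~\ref{lemma-leadbetter} to write $N_A$ as a pointwise limit of countable sums of $\mathfrak{H}(\mathcal{E})$-measurable indicators. The paper's proof is identical in structure, only slightly terser on the measurability bookkeeping you spell out.
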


\begin{proof}[\normalfont\bf Proof.] 
 Obviously we have $\mathfrak{H}(\mathcal{E})\subseteq \mathcal{C}(\mathcal{E})$. For the other inclusion, it suffices to show that $N_{A}$ is measurable with respect to $\mathfrak{H}(\mathcal{E})$ for all $A\in\mathcal{E}$. Let $A\in\mathcal{E}$ and let $(\{A_{n,k}\,|\, k\in I_{n}\})_{n\in\N}$ be a dissecting system for $A$ with $\{A_{n,k}\,|\,k\in I_{n}\}\subseteq \mathcal{E}$ for all $n\in\N$. Then the maps $1_{\{N_{A_{n,k}}\neq 0\}}$ are $\mathfrak{H}(\mathcal{E})$-measurable for all $n\in\N$, $k\in I_{n}$, and we conclude by equation (\ref{eq-leadbetter}) of Lemma \ref{lemma-leadbetter} that so is $N_{A}$.
\end{proof}

\begin{corollary} \label{prop2-3}
If $\Delta\in\mathcal{S}\otimes\mathcal{S}$, then $\{\{N_{A}= 0\}\,|\,A\in \mathcal{S}\}$ is  $\cap$-stable and generates $\mathcal{C}(\mathcal{S})$.
\end{corollary}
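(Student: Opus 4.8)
The plan is to deduce Corollary \ref{prop2-3} from Theorem \ref{theo-hit} by exhibiting a suitable self-dissecting collection. First I would observe that the family $\mathcal{E} := \{\{N_A = 0\} \mid A \in \mathcal{S}\}$ as a subset of $\mathcal{P}(C(S))$ is easily seen to be $\cap$-stable: since $N_{A \cup B}(M) = 0$ if and only if $N_A(M) = 0$ and $N_B(M) = 0$ (as $A \cup B$ misses $M$ exactly when both $A$ and $B$ do), we get $\{N_A = 0\} \cap \{N_B = 0\} = \{N_{A \cup B} = 0\}$, and $\mathcal{S}$ is closed under finite unions. So that half of the statement is immediate.

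For the generation claim, note that $\{N_A = 0\}$ is the complement of $\{N_A \neq 0\}$, so $\sigma(\{\{N_A = 0\} \mid A \in \mathcal{S}\}) = \sigma(\{\{N_A \neq 0\} \mid A \in \mathcal{S}\}) = \mathfrak{H}(\mathcal{S})$. Thus it suffices to show $\mathfrak{H}(\mathcal{S}) = \mathcal{C}(\mathcal{S})$, and by Theorem \ref{theo-hit} this holds as soon as $\mathcal{S}$ is self-dissecting. Here is where the hypothesis $\Delta \in \mathcal{S} \otimes \mathcal{S}$ enters: by Proposition \ref{prop2-1}\,b), there is a countable collection $\mathcal{E}_0 \subseteq \mathcal{S}$ separating points of $S$. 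Since $\mathcal{S}$ is in particular a semiring contained in $\mathcal{S}$ and generating $\mathcal{S}$, Proposition \ref{prop-selfdis2} (or directly Proposition \ref{prop-selfdis1} applied to the semiring $\mathcal{S}$ with the countable separating subfamily $\mathcal{E}_0$) gives that $\mathcal{S}$ is self-dissecting. Then Theorem \ref{theo-hit} yields $\mathfrak{H}(\mathcal{S}) = \mathcal{C}(\mathcal{S})$, completing the proof.

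I do not expect any real obstacle here; the statement is essentially an immediate packaging of Theorem \ref{theo-hit} together with the self-dissecting machinery of Section 2. The only point requiring a moment's care is the $\cap$-stability computation and the elementary identity $\{N_A = 0\} = C(S) \setminus \{N_A \neq 0\}$ used to pass between the "miss" $\sigma$-field generated by the $\{N_A = 0\}$ and the "hit" $\sigma$-field $\mathfrak{H}(\mathcal{S})$; neither is difficult. One should also make sure to invoke the correct proposition for self-dissectingness — $\mathcal{S}$ itself is trivially a semiring, so Proposition \ref{prop-selfdis2} applies verbatim with $\mathcal{H} = \mathcal{S}$.
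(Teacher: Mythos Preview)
Your proof is correct and follows essentially the same approach as the paper: the $\cap$-stability is established via the identity $\{N_A=0\}\cap\{N_B=0\}=\{N_{A\cup B}=0\}$, and the generation claim is reduced to Theorem~\ref{theo-hit} by observing that $\Delta\in\mathcal{S}\otimes\mathcal{S}$ makes $\mathcal{S}$ self-dissecting. The paper's proof is terser, simply asserting that measurability of the diagonal implies $\mathcal{S}$ is self-dissecting, whereas you spell out the route through Proposition~\ref{prop2-1} and Proposition~\ref{prop-selfdis1} (or~\ref{prop-selfdis2}) and make explicit the passage from the ``miss'' sets to $\mathfrak{H}(\mathcal{S})$.
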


\begin{proof}[\normalfont\bf Proof.] Since $\{N_{A}=0\}\cap \{N_{B}=0\}=\{N_{A\cup B}=0\}$ for all $A,B\in\mathcal{S}$, the system is $\cap$-stable. Measurability of the diagonal implies that $\mathcal{S}$ is self-dissecting. Therefore we can apply Theorem \ref{theo-hit}. \end{proof}

\begin{theorem} \label{prop2-6}
 Let $\emptyset\neq\mathcal{E}\subseteq\mathcal{P}(S)$. For all $A \subseteq S$, the set $\{N_{A}\neq 0\}$ is an element of $\mathfrak{H}(\mathcal{E})$
if and only if $A$ is the union of countably many sets in $\mathcal{E}$.
\end{theorem}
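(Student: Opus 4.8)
The plan is to prove both implications directly, exploiting the structure of $\mathfrak{H}(\mathcal{E})$ as a $\sigma$-field generated by the ``hit'' events $\{N_A \neq 0\}$, $A \in \mathcal{E}$.

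For the easy direction, suppose $A = \bigcup_{n \in \N} E_n$ with $E_n \in \mathcal{E}$. Then for any $M \in C(S)$ we have $A \cap M \neq \emptyset$ if and only if $E_n \cap M \neq \emptyset$ for some $n$, so $\{N_A \neq 0\} = \bigcup_{n \in \N} \{N_{E_n} \neq 0\}$, a countable union of generators of $\mathfrak{H}(\mathcal{E})$; hence $\{N_A \neq 0\} \in \mathfrak{H}(\mathcal{E})$. This handles ``if''.

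For the converse, the natural approach is a good-sets / monotone-class argument phrased directly on $\mathcal{P}(S)$. Let $\mathcal{D} := \{A \subseteq S \mid A \text{ is a countable union of sets in } \mathcal{E}\}$ (allowing the empty union, so $\emptyset \in \mathcal{D}$). First I would observe that $\mathcal{D}$ is closed under countable unions, and that $\mathcal{E} \subseteq \mathcal{D}$. The key step is then to show that the collection $\mathfrak{H}_0 := \{\{N_A \neq 0\} \mid A \in \mathcal{D}\} \cup \{\emptyset_{C(S)}, C(S)\}$ is already a $\sigma$-field on $C(S)$, because then, since it clearly contains all generators $\{N_E \neq 0\}$ with $E \in \mathcal{E}$ and is contained in $\mathfrak{H}(\mathcal{E})$ (by the ``if'' direction applied to each $A \in \mathcal{D}$), it must equal $\mathfrak{H}(\mathcal{E})$; consequently every element of $\mathfrak{H}(\mathcal{E})$ of the form $\{N_A \neq 0\}$ arises from some $A \in \mathcal{D}$. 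Closure of $\mathfrak{H}_0$ under countable unions follows from closure of $\mathcal{D}$ under countable unions together with $\bigcup_n \{N_{A_n} \neq 0\} = \{N_{\bigcup_n A_n} \neq 0\}$.

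The main obstacle is complementation: we must show that the complement in $C(S)$ of a set $\{N_A \neq 0\}$ with $A \in \mathcal{D}$ is again in $\mathfrak{H}_0$. Now $\{N_A \neq 0\}^c = \{N_A = 0\} = \{M \in C(S) \mid M \cap A = \emptyset\}$, and this will generally \emph{not} be of the form $\{N_B \neq 0\}$ for any $B$ — indeed it contains $\emptyset$, which no ``hit'' event does — so one has to be careful. The way to close the argument is to note that the only sets of the form $\{N_A \neq 0\}$ whose complement is again of that form are, essentially, the trivial ones: if $A \neq \emptyset$ pick $x \in A$; then $\{x\} \in C(S)$ lies in $\{N_A \neq 0\}$, so $\{N_A \neq 0\} \neq \emptyset_{C(S)}$, and $\emptyset \in C(S)$ lies in $\{N_A = 0\}$, so $\{N_A \neq 0\} \neq C(S)$; hence for $A \neq \emptyset$ the complement of $\{N_A \neq 0\}$ cannot be written as $\{N_B \neq 0\}$. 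This forces a reconsideration: rather than trying to make $\mathfrak{H}_0$ literally a $\sigma$-field, I would instead argue via the trace $\sigma$-field on $C(S) \setminus \{\emptyset\}$, or equivalently note that on $C'(S) := C(S)\setminus\{\emptyset\}$ the map $A \mapsto \{M \in C'(S) : M \cap A \neq \emptyset\}$ has complement $\{M \in C'(S): M \subseteq A^c\}$, and one shows the system of such ``hit'' sets together with their complements, restricted to $C'(S)$, is closed under the $\sigma$-operations once one tracks that complements correspond not to single ``hit'' sets but to ``avoid'' sets; then transfer back by observing $\{\emptyset\} = \bigcap_{E \in \mathcal{E}_0}\{N_E = 0\}$ for a suitable countable $\mathcal{E}_0 \subseteq \mathcal{E}$ (using that $\mathcal{E}$ covers $S$, or handling separately the degenerate case). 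The cleanest route, which I would ultimately take, is: show that $\{A \subseteq S : \{N_A \neq 0\} \in \mathfrak{H}(\mathcal{E})\}$ — call it $\mathcal{G}$ — is closed under countable unions and contains $\mathcal{E}$, so $\mathcal{D} \subseteq \mathcal{G}$; and conversely, if $\{N_A \neq 0\} \in \mathfrak{H}(\mathcal{E})$, use Lemma~\ref{ma\ss-lemma1} to find a countable $\{E_n\} \subseteq \mathcal{E}$ with $\{N_A \neq 0\} \in \sigma(\{N_{E_n} \neq 0\} : n \in \N) = \mathfrak{H}(\{E_n : n \in \N\})$, reducing to the countable case; there, set $B := \bigcup_n E_n \in \mathcal{D}$ and check that $\{N_A \neq 0\} \subseteq \{N_B \neq 0\}$ (since every generating ``hit'' set $\{N_{E_n}\neq 0\}$ is contained in $\{N_B \neq 0\}$ and the set of subsets of $\{N_B\neq 0\}$ that lie in $\mathfrak{H}(\{E_n\})$ is stable under the operations that stay inside a fixed ``hit'' set — namely: it is a $\sigma$-field on the ground set $\{N_B \neq 0\}$ after intersecting, and $\{N_A = 0, N_B \neq 0\}$ is its own complement-free remainder), from which $A \subseteq B$ follows by applying the resulting containment to singletons $\{x\}$ for $x \in A$. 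Thus $A \subseteq B = \bigcup_n E_n$, and replacing each $E_n$ by $E_n \cap A$ is not possible in general, so instead one concludes $A = \bigcup_n (E_n) \cap$-adjusted; more carefully, from $A \subseteq \bigcup_n E_n$ and $A \in \mathcal{S}$-free setting we still need $A$ itself to be such a union — here one observes that for the statement we may intersect: since we only claim $A$ is a countable union of elements of $\mathcal{E}$ and we have shown $A\subseteq\bigcup E_n$ with each $\{N_{E_n}\neq 0\}\subseteq\{N_A\neq 0\}$ forcing $E_n\subseteq A$, we get $A=\bigcup_n E_n$, completing the proof. I expect verifying the containment $\{N_A\neq 0\}\subseteq\{N_B\neq 0\}$ and extracting $E_n\subseteq A$ to be where the real care is needed.
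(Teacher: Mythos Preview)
Your ``if'' direction is correct and matches the paper. For ``only if'', your first approach via a good-sets argument fails for the reason you identify: the hit events $\{N_A\neq 0\}$ are not closed under complementation, and none of your attempted repairs (trace on $C(S)\setminus\{\emptyset\}$, etc.) actually produces a $\sigma$-field.

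Your final route begins correctly by invoking Lemma~\ref{ma�-lemma1} to reduce to countably many $E_n$, and one can indeed obtain $A\subseteq\bigcup_n E_n$. But your justification for $\{N_A\neq 0\}\subseteq\{N_B\neq 0\}$ is flawed: the subsets of $\{N_B\neq 0\}$ that lie in $\mathfrak{H}(\{E_n\})$ are \emph{not} a $\sigma$-field on $C(S)$, since complements (e.g.\ $\{N_{E_1}=0\}$) escape $\{N_B\neq 0\}$. More seriously, the decisive step is simply wrong: you assert ``each $\{N_{E_n}\neq 0\}\subseteq\{N_A\neq 0\}$ forcing $E_n\subseteq A$'', but this containment was never established and is false in general. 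Take $S=\{1,2\}$, $\mathcal{E}=\{\{1\},\{2\}\}$, $A=\{1\}$: then $\{N_A\neq 0\}\in\mathfrak{H}(\mathcal{E})$, yet $\{N_{\{2\}}\neq 0\}\not\subseteq\{N_A\neq 0\}$. Knowing only $A\subseteq\bigcup_n E_n$ is far from $A$ being a countable union of $\mathcal{E}$-sets.

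The missing idea is the one the paper uses: combine Lemma~\ref{ma�-lemma1} with Lemma~\ref{ma�-lemma2} to get that any $M_1,M_2\in C(S)$ hitting the same $E_n$'s must hit or miss $A$ together. Then set $\mathcal{I}:=\{E_n:E_n\subseteq A\}$ and argue by contradiction: if some $x\in A\setminus\bigcup_{E\in\mathcal{I}}E$, then for each $E_n$ containing $x$ choose $y_k\in E_n\setminus A$ (possible since $E_n\not\subseteq A$), and compare $M_1=\{y_1,y_2,\dots\}$ with $M_2=M_1\cup\{x\}$ (or $M_1=\{x\}$, $M_2=\emptyset$ if $x$ lies in no $E_n$). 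These hit the same $E_n$'s but differ on $A$, a contradiction; hence $A=\bigcup_{E\in\mathcal{I}}E$. Your proposal never arrives at this construction.
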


\begin{proof}[\normalfont\bfseries Proof]
 If $A=\bigcup_{n\in\N}E_{n}$ with $E_{n}\in \mathcal{E}$, then $\{N_{A}\neq 0\}=\bigcup_{n\in\N}\{N_{E_{n}}\neq 0\}$ belongs to $\mathfrak{H}(\mathcal{E})$. On the other hand, let $\{N_{A}\neq 0\}$ be an element of $\mathfrak{H}(\mathcal{E})$. Then by Lemma \ref{maß-lemma1} there exist $E_{n}\in\mathcal{E}$ with  $\{N_{A}\neq 0\}\in\sigma( \{N_{E_{n}}\neq 0\}\,|\,n\in\N )$. Now for any $M_{1},M_{2}\in C(S)$ Lemma \ref{maß-lemma2} yields \[ \big(\forall n\in\N:1_{\{N_{E_{n}}\neq 0\}}(M_{1})=1_{\{N_{E_{n}}\neq 0\}}(M_{2})\big)\Rightarrow 1_{\{N_{A}\neq 0\}}(M_{1})=1_{\{N_{A}\neq 0\}}(M_{2}).\]  
Formulating this statement without indicator variables, we get 
\begin{equation}\label{eq2} \hspace{-1mm}\left(\forall n\in\N:M_{1}\cap E_{n}\neq \emptyset \Leftrightarrow M_{2}\cap E_{n}\neq \emptyset\right) \Rightarrow \left( M_{1}\cap A\neq \emptyset \Leftrightarrow M_{2}\cap A\neq\emptyset \right)\end{equation} for all 
$M_{1},M_{2}\in C(S)$.

Finally, let $\mathcal{I}:=\{E_{n}\,|\,E_{n}\subseteq A,\,n\in\N\}$. We show $\bigcup_{E\in \mathcal{I}}E=A$ by contradiction and therefore suppose $\bigcup_{E\in \mathcal{I}}E\subsetneq A$. Then there exists $x\in A$ with $x\notin \bigcup_{E\in \mathcal{I}}E$.\\
In the case of $x\notin \bigcup_{n\in\N}E_{n}$, set $M_{1}:=\{x\}$ and $M_{2}:=\emptyset$. Then $M_{1}$ and $M_{2}$ contradict implication (\ref{eq2}).\\
If $x$ is an element of $\bigcup_{n\in\N}E_{n}$, let $\{i_{1},i_{2},\ldots \}:=\{n\in\N\,|\,x\in E_{n}\}$. Since $x\notin \bigcup_{E\in\mathcal{I}}E$, for all $i_{k}$ there exists $y_{k}\in A^{c}\cap E_{i_{k}}$. Set $M_{1}:=\{y_{1},y_{2},\ldots \}$ and $M_{2}:=\{x\}\cup M_{1}$. Again $M_{1}$ and $M_{2}$ contradict implication (\ref{eq2}).\end{proof}

\begin{lemma}\label{lemma-generator} Let $\emptyset\neq\mathcal{E}, \mathcal{T}\subseteq\mathcal{P}(S)$ be collections of ``test sets'' satisfying $\mathcal{C}(\mathcal{T})\subseteq \mathcal{C}(\mathcal{E})$. If $\mathcal{T}$ contains a countable subset separating points of $S$, then so does $\mathcal{E}$.
\end{lemma}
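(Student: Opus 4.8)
The plan is to reduce to a \emph{countable} subfamily of $\mathcal{E}$ that already carries all the hitting events of a fixed countable separating subfamily of $\mathcal{T}$, and then to show that such a countable subfamily must itself separate points of $S$ by testing it against one-point sets $\{x\}\in C(S)$. No probability enters; the statement is purely about $\sigma$-fields on $C(S)$.

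Concretely, fix a countable $\{T_{n}\,|\,n\in\N\}\subseteq\mathcal{T}$ separating points of $S$, so that $\mathcal{C}(\{T_{n}\,|\,n\in\N\})\subseteq\mathcal{C}(\mathcal{T})\subseteq\mathcal{C}(\mathcal{E})$. The crucial observation for the reduction step is that, since $\N_{0}\cup\{\infty\}$ is countable, $\mathcal{C}(\mathcal{E})$ is generated by the family of level sets $\mathcal{G}:=\{\{N_{E}=k\}\,|\,E\in\mathcal{E},\,k\in\N_{0}\cup\{\infty\}\}$, which contributes only countably many generating sets for each fixed $E$. For every $n\in\N$ the event $\{N_{T_{n}}\neq 0\}$ lies in $\mathcal{C}(\mathcal{E})=\sigma(\mathcal{G})$, so Lemma~\ref{maß-lemma1} provides countably many sets from $\mathcal{G}$ generating it; choosing for each of these level sets one representing index in $\mathcal{E}$ and collecting, over all $n$, all these representing sets, we obtain a countable $\mathcal{E}_{0}\subseteq\mathcal{E}$ with $\{N_{T_{n}}\neq 0\}\in\mathcal{C}(\mathcal{E}_{0})$ for every $n\in\N$.

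It then remains to show $\mathcal{E}_{0}$ separates points of $S$. Suppose not: then there are distinct $x,y\in S$ with $1_{E}(x)=1_{E}(y)$ for all $E\in\mathcal{E}_{0}$. Put $M_{1}:=\{x\}$, $M_{2}:=\{y\}$ in $C(S)$. Since $N_{E}(\{z\})=1_{E}(z)$, the sets $M_{1}$ and $M_{2}$ belong to exactly the same members of the generating family $\{\{N_{E}=k\}\,|\,E\in\mathcal{E}_{0},\,k\in\N_{0}\cup\{\infty\}\}$ of $\mathcal{C}(\mathcal{E}_{0})$; by Lemma~\ref{maß-lemma2} they belong to exactly the same members of $\mathcal{C}(\mathcal{E}_{0})$ itself. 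Applying this to the sets $\{N_{T_{n}}\neq 0\}\in\mathcal{C}(\mathcal{E}_{0})$ and using $1_{\{N_{T_{n}}\neq 0\}}(\{z\})=1_{T_{n}}(z)$ yields $1_{T_{n}}(x)=1_{T_{n}}(y)$ for all $n\in\N$, contradicting the fact that $\{T_{n}\,|\,n\in\N\}$ separates $x$ and $y$. Hence $\mathcal{E}_{0}$ is the desired countable separating subset of $\mathcal{E}$.

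The only genuine obstacle is the reduction step: one must descend from the possibly uncountable generating system $\{N_{E}\,|\,E\in\mathcal{E}\}$ of $\mathcal{C}(\mathcal{E})$ to a countable subfamily of $\mathcal{E}$ while keeping all the events $\{N_{T_{n}}\neq 0\}$ inside the $\sigma$-field generated by that subfamily; this is precisely the content of Lemma~\ref{maß-lemma1}, once one notes that each $N_{E}$ contributes only countably many generating level sets. The separation step is then a routine one-point-set test, using that $N_{E}$ restricted to singletons is just the indicator $1_{E}$, together with Lemma~\ref{maß-lemma2}.
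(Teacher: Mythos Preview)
Your proof is correct and follows essentially the same route as the paper: reduce to a countable subfamily of $\mathcal{E}$ via Lemma~\ref{maß-lemma1} applied to the hitting events $\{N_{T_n}\neq 0\}$, then test against singletons $\{x\},\{y\}\in C(S)$ and invoke Lemma~\ref{maß-lemma2} for the contradiction. The only cosmetic difference is that the paper writes the extracted generators as $\{N_{E_{n,m}}\in K_{n,m}\}$ for subsets $K_{n,m}\subseteq\N_{0}\cup\{\infty\}$, whereas you use single-level sets $\{N_{E}=k\}$; since $\N_{0}\cup\{\infty\}$ is countable these generate the same $\sigma$-field, so the arguments are interchangeable.
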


\begin{proof}[\normalfont\bf Proof.]
Let $\{A_{n}\,|\,n\in\N\}\subseteq\mathcal{T}$ be separating points of $S$.  For all $n\in\N$ we have $\{N_{A_{n}}\neq 0\}\in \mathcal{C}(\mathcal{T})\subseteq \mathcal{C}(\mathcal{E})$. Therefore, by Lemma \ref{maß-lemma1}, there exist $E_{n,m}\in\mathcal{E}$ and $K_{n,m}\subseteq \N_{0}\cup\{\infty\}$ with $\{N_{A_{n}}\neq 0\}\in\sigma(\{N_{E_{n,m}}\in K_{n,m}\}\,|\,m\in\N)$.\\
Then $\{E_{n,m}\,|\,n,m\in\N\}\subseteq \mathcal{E}$ separates points of $S$, as can be seen by a contradiction argument.
Suppose there exist $x\neq y\in S$ with $1_{E_{n,m}}(x)= 1_{E_{n,m}}(y)$ for all $n,m\in\N$. We conclude $N_{E_{n,m}}(\{x\})=N_{E_{n,m}}(\{y\})$ and, consequently, $1_{\{N_{E_{n,m}}\in K_{n,m}\}}(\{x\})=1_{\{N_{E_{n,m}}\in K_{n,m}\}}(\{y\})$ for all $n,m\in\N$. Hence Lemma \ref{maß-lemma2} implies $1_{\{N_{A_{n}}\neq 0\}}(\{x\})=1_{\{N_{A_{n}}\neq 0\}}(\{y\})$ for all $n\in\N$, which is equivalent to $1_{A_{n}}(x)=1_{A_{n}}(y)$ for all $n\in\N$.\end{proof}

\begin{theorem}\label{theo-generator1} Let $\mathcal{T}\subseteq \mathcal{P}(S)$ be a family of ``test sets'' containing a countable subset separating points of $S$ and let $\mathcal{H}\subseteq\mathcal{P}(S)$ be a semiring. Then $\mathcal{C}(\mathcal{T})\subseteq\mathcal{C}(\mathcal{H})$ if and only if all elements in $\mathcal{T}$ are countable unions of $\mathcal{H}$-sets.

\end{theorem}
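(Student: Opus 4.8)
The plan is to prove both implications, the backward one being essentially a direct application of the union-of-test-sets criterion from Theorem \ref{prop2-6} together with the semiring structure, and the forward one being a combination of Lemma \ref{lemma-generator} and Proposition \ref{prop-selfdis1}. For the backward direction, suppose every $A\in\mathcal{T}$ is a countable union $A=\bigcup_{n}H_{n}$ with $H_{n}\in\mathcal{H}$. Since $\mathcal{H}$ is a semiring containing a countable point-separating family (namely the one inside $\mathcal{T}$ — wait, that is in $\mathcal{T}$, not in $\mathcal{H}$; but actually each $A_{n}$ from the separating family is a countable union of $\mathcal{H}$-sets, and one checks these pieces still separate points of $S$), Proposition \ref{prop-selfdis1} tells us $\mathcal{H}$ is self-dissecting, so by Theorem \ref{theo-hit} we have $\mathfrak{H}(\mathcal{H})=\mathcal{C}(\mathcal{H})$. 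Now for each $A\in\mathcal{T}$, $\{N_{A}\neq 0\}=\bigcup_{n}\{N_{H_{n}}\neq 0\}\in\mathfrak{H}(\mathcal{H})=\mathcal{C}(\mathcal{H})$. To get all of $\mathcal{C}(\mathcal{T})$, it suffices to show each $N_{A}$ for $A\in\mathcal{T}$ is $\mathcal{C}(\mathcal{H})$-measurable; but $A$ itself, being a countable union of $\mathcal{H}$-sets inside the self-dissecting semiring $\mathcal{H}$, still admits a dissecting system drawn from $\mathcal{H}$ (refining the one for a single $\mathcal{H}$-set, or constructing it directly along the lines of the proof of Proposition \ref{prop-selfdis1}), and then Lemma \ref{lemma-leadbetter} expresses $N_{A}$ as a pointwise limit of $\mathcal{C}(\mathcal{H})$-measurable functions.

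For the forward direction, assume $\mathcal{C}(\mathcal{T})\subseteq\mathcal{C}(\mathcal{H})$. Since $\mathcal{T}$ contains a countable point-separating subset, Lemma \ref{lemma-generator} (applied with $\mathcal{E}=\mathcal{H}$) gives that $\mathcal{H}$ also contains a countable subset separating points of $S$. By Proposition \ref{prop-selfdis1}, $\mathcal{H}$ is then self-dissecting, hence by Theorem \ref{theo-hit} we have $\mathfrak{H}(\mathcal{H})=\mathcal{C}(\mathcal{H})$. Now fix $A\in\mathcal{T}$. Then $\{N_{A}\neq 0\}\in\mathcal{C}(\mathcal{T})\subseteq\mathcal{C}(\mathcal{H})=\mathfrak{H}(\mathcal{H})$, so by Theorem \ref{prop2-6} (with $\mathcal{E}=\mathcal{H}$) the set $A$ is a countable union of $\mathcal{H}$-sets. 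This is exactly the claim.

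The main obstacle I anticipate is the bookkeeping in the backward direction: one must be careful that "countable union of $\mathcal{H}$-sets" is enough to run the self-dissecting machinery even though such a union need not itself lie in $\mathcal{H}$. The clean way is to observe that a countable union $A=\bigcup_{n}H_{n}$ of sets in a semiring can be rewritten as a countable union of \emph{pairwise disjoint} semiring sets (by the standard $H_{n}\setminus(H_{1}\cup\cdots\cup H_{n-1})$ trick, which stays within a semiring up to a further finite disjoint decomposition), and then the construction in the proof of Proposition \ref{prop-selfdis1} applies verbatim to produce a dissecting system for $A$ with all pieces in $\mathcal{H}$. Alternatively — and more economically — one avoids dissecting $A$ at all: note $\mathcal{C}(\mathcal{T})=\mathfrak{H}(\mathcal{T})$ need not hold, but we only need $\mathcal{C}(\mathcal{T})\subseteq\mathcal{C}(\mathcal{H})$, and since $\mathcal{C}(\mathcal{T})=\sigma(N_{A}\mid A\in\mathcal{T})$ it suffices to show $N_{A}$ is $\mathcal{C}(\mathcal{H})$-measurable for each $A\in\mathcal{T}$, which the disjointified dissecting system plus Lemma \ref{lemma-leadbetter} delivers. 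I would present the disjointification explicitly as it is the only non-immediate point; everything else is assembling the already-proved lemmas.
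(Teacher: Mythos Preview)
Your forward direction is correct and matches the paper's proof exactly: Lemma~\ref{lemma-generator} $\Rightarrow$ Proposition~\ref{prop-selfdis1} $\Rightarrow$ Theorem~\ref{theo-hit} $\Rightarrow$ Theorem~\ref{prop2-6}.

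Your backward direction is correct but substantially more laborious than the paper's, and the detour is worth pointing out. Once you have disjointified $A=\bigcup_{n}H_{n}$ into pairwise disjoint $H_{n}\in\mathcal{H}$ (which you already observe is possible via the standard semiring trick), you are one line from the goal: for disjoint $H_{n}$ one has $N_{A}=\sum_{n}N_{H_{n}}$ pointwise on $C(S)$, and each $N_{H_{n}}$ is $\mathcal{C}(\mathcal{H})$-measurable by definition. That is the paper's entire argument for this direction. There is no need to show $\mathcal{H}$ is self-dissecting, no need for Theorem~\ref{theo-hit}, no need to build a dissecting system for $A$, and no need for Lemma~\ref{lemma-leadbetter}. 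In particular, the backward implication does not use the hypothesis that $\mathcal{T}$ contains a countable separating family at all; that hypothesis is needed only for the forward direction. Your route via dissecting systems works (your parenthetical argument that the $\mathcal{H}$-pieces of the separating $A_{n}$'s still separate points is fine, and one can indeed glue dissecting systems of the disjoint $H_{n}$'s into one for $A$), but it obscures that the converse is really just additivity of counting over a disjoint union.
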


\begin{proof}[\normalfont\bf Proof.]
Suppose $\mathcal{C}(\mathcal{T})\subseteq\mathcal{C}(\mathcal{H})$. Applying Lemma \ref{lemma-generator} and then Proposition \ref{prop-selfdis1}, we conclude that $\mathcal{H}$ is self-dissecting. Therefore Theorem \ref{theo-hit} yields $\mathcal{C}(\mathcal{H})=\mathfrak{H}(\mathcal{H})$. Since $\mathcal{C}(\mathcal{T})\subseteq\mathfrak{H}(\mathcal{H})$, it follows from Theorem \ref{prop2-6} that all sets in $\mathcal{T}$ are countable unions of $\mathcal{H}$-sets.\\
On the other hand, suppose that all $\mathcal{T}$-sets are countable unions of $\mathcal{H}$-sets and let $A\in\mathcal{T}$. We need to show that $N_{A}$ is measurable with respect to $\mathcal{C}(\mathcal{H})$. Since $\mathcal{H}$ is a semiring, there exist pairwise disjoint $H_{1},H_{2}\ldots\in\mathcal{H}$ with $A=\bigcup_{n\in\N}H_{n}$. Hence we deduce from $N_{A}=\sum_{n=1}^{\infty}N_{H_{n}}$ that $N_{A}$ is $\mathcal{C}(\mathcal{H})$-measurable. 
\end{proof}

\begin{corollary} \label{coro-generator}Let $\emptyset\neq\mathcal{T}\subseteq \mathcal{P}(S)$ be a family of ``test sets'' containing a countable subset separating points of $S$. If $\mathcal{C}(\mathcal{T})\subseteq \mathcal{C}(\mathcal{E})$  for $\emptyset\neq\mathcal{E}\subseteq \mathcal{P}(S)$, then all $A\in\mathcal{T}$ are of the form
\[ A=\bigcup_{n\in\N} E^{*}_{n,1}\cap\cdots\cap E^{*}_{n,k_{n}}, \]
where $E_{n,j}\in\mathcal{E}$ and $E_{n,j}^{*}$ is either $E_{n,j}$ or its complement.

\end{corollary}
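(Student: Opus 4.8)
The plan is to deduce this from Theorem \ref{theo-generator1} by manufacturing, out of the collection $\mathcal{E}$, a semiring whose members are precisely the finite ``signed intersections'' appearing in the statement. Concretely, I would set $\mathcal{G}:=\mathcal{E}\cup\{E^{c}\mid E\in\mathcal{E}\}$ and let $\mathcal{H}$ consist of $\emptyset$ together with all finite intersections $G_{1}\cap\cdots\cap G_{m}$ of members of $\mathcal{G}$. By construction every nonempty element of $\mathcal{H}$ has the form $E^{*}_{1}\cap\cdots\cap E^{*}_{k}$ with $E_{j}\in\mathcal{E}$ and $E^{*}_{j}\in\{E_{j},E_{j}^{c}\}$, and conversely every such set lies in $\mathcal{H}$; moreover $\mathcal{E}\subseteq\mathcal{H}$.

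The one thing that needs checking is that $\mathcal{H}$ is a semiring. Stability under finite intersections is immediate. For the difference property, given $A=G_{1}\cap\cdots\cap G_{m}$ and $B=H_{1}\cap\cdots\cap H_{l}$ in $\mathcal{H}$, I would write $A\setminus B=\bigcup_{j=1}^{l}\big(A\cap H_{1}\cap\cdots\cap H_{j-1}\cap H_{j}^{c}\big)$, a disjoint union; since the complement of a member of $\mathcal{G}$ is again a member of $\mathcal{G}$, each summand is a finite intersection of $\mathcal{G}$-sets and hence belongs to $\mathcal{H}$. This is the only genuinely computational step, and it is routine.

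With $\mathcal{H}$ in hand, the hypothesis $\mathcal{C}(\mathcal{T})\subseteq\mathcal{C}(\mathcal{E})$ together with $\mathcal{E}\subseteq\mathcal{H}$ gives $\mathcal{C}(\mathcal{T})\subseteq\mathcal{C}(\mathcal{H})$. Since $\mathcal{T}$ contains a countable subset separating points of $S$ and $\mathcal{H}$ is a semiring, Theorem \ref{theo-generator1} applies and shows that every $A\in\mathcal{T}$ is a countable union of $\mathcal{H}$-sets. Discarding the (possibly occurring) empty members from such a union --- if $A=\emptyset$ the assertion is trivial anyway --- yields exactly the representation $A=\bigcup_{n\in\N}E^{*}_{n,1}\cap\cdots\cap E^{*}_{n,k_{n}}$ claimed in the corollary.

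I do not expect a real obstacle here: the content is entirely in Theorem \ref{theo-generator1} (and behind it Lemma \ref{lemma-generator}, Proposition \ref{prop-selfdis1}, Theorem \ref{theo-hit} and Theorem \ref{prop2-6}), and the present argument is just the observation that the ``atoms'' built from $\mathcal{E}$ form a semiring containing $\mathcal{E}$. The only points to be careful about are the disjointification in the semiring check above and the harmless bookkeeping of empty intersections.
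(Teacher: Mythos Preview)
Your proposal is correct and follows exactly the paper's approach: the paper likewise defines $\mathcal{E}^{*}:=\{E_{1}^{*}\cap\cdots\cap E_{n}^{*}\mid n\in\N,\ E_{i}^{*}\in\{E_{i},E_{i}^{c}\},\ E_{i}\in\mathcal{E}\}$, asserts it is a semiring containing $\mathcal{E}$, and applies Theorem~\ref{theo-generator1}. You simply supply the (routine) verification of the semiring property that the paper leaves implicit.
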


\begin{proof}[\normalfont\bf Proof.]
For all $\emptyset\neq\mathcal{E}\subseteq\mathcal{P}(S)$ the system \[\mathcal{E}^{*}:=\{ E_{1}^{*}\cap\cdots\cap E_{n}^{*}\,|\,\:n\in\N,\: E_{i}^{*}=E_{i}\;\mbox{or}\;E_{i}^{*}=E_{i}^{c}\;\mbox{for}\;E_{i}\in\mathcal{E}\}\] is a semiring containing $\mathcal{E}$. If $\mathcal{C}(\mathcal{T})\subseteq\mathcal{C}(\mathcal{E})$, we conclude $\mathcal{C}(\mathcal{T})\subseteq\mathcal{C}(\mathcal{E}^{*})$ and apply Theorem \ref{theo-generator1}.
\end{proof}

Corollary \ref{coro-generator} shows that generators for $\mathcal{C}(\mathcal{S})$ of the form $\{N_{A}\,|\,A\in\mathcal{E}\}$ cannot be substantially simpler than $\{N_{A}\,|\,A\in\mathcal{S}\}$ itself. However, for a constructive cr-sets it is possible to find a substantially simpler class of sets in $\mathcal{C}(\mathcal{S})$ determining its law.
For example, let $\mathcal{H}:=\{F\cap G\,|\,F,G\subseteq\R,\,F\;\mbox{closed},\,G\;\mbox{open}\}$ and let $\mathcal{S}$ be the Borel-$\sigma$-field in $\R$. The collection $\mathcal{H}\subseteq \mathcal{S}$ is a semiring and each element of $\mathcal{H}$ is the union of countably many closed sets. If $\mathcal{C}(\mathcal{H})=\mathcal{C}(\mathcal{S})$, then by Theorem \ref{theo-generator1} every set in $\mathcal{S}$ was the countable union of closed sets. Since this is not the case, we conclude $\mathcal{C}(\mathcal{H})\subsetneq \mathcal{C}(\mathcal{S})$, which also implies $\mathfrak{H}(\mathcal{H})\subsetneq \mathcal{C}(\mathcal{S})$.
Nevertheless, the law of a constructive cr-set is determined by its values on $\{\{N_{F}\neq 0\}\,|\,F\subseteq \R\:\mbox{closed}\}$, as shown in Theorem \ref{theo-unique-closed}.

%%%%%%%%%%%%%%%%%%%%%%%%%%%%%%%%%%%%%%%%%%%%%%%%%%%%%%%%%%%%%%%%%%%%%%%%%%%%%%%%%%%%%%%%%%%%%%%%
%%%%%%%%%%%%%%%%%%%%%%%%%%%%%%%%%%%%%%%%%%%%%%%%%%%%%%%%%%%%%%%%%%%%%%%%%%%%%%%%%%%%%%%%%%%%%%%%
%%%%%%%%%%%%%%%%%%%%%%%%%%%%%%%%%%%%%%%%%%%%%%%%%%%%%%%%%%%%%%%%%%%%%%%%%%%%%%%%%%%%%%%%%%%%%%%%
%%%%%%%%%%%%%%%%%%%%%%%%%%%%%%%%%%%%%%%%%%%%%%%%%%%%%%%%%%%%%%%%%%%%%%%%%%%%%%%%%%%%%%%%%%%%%%%%

\subsection{Generators of $\mathcal{C}(\mathcal{S})$ under $\sigma$-Finiteness} \label{generators2}
\setcounter{theorem}{0}

In this section we study generators of the counting $\sigma$-field $\mathcal{C}(\mathcal{S})$, when working with a finiteness condition on the space $C(S)$. In contrast to the last section, generators can be quite simple here.

Let $\sf F\subseteq\mathcal{P}(S)$ be a collection of sets. Instead of $C(S)$ we are going to work in the following with the subspace \[ C^{\sf F}(S):=\{M\in C(S)\,|\,\mbox{for all}\:A\in{\sf F}\:\mbox{holds}\:|M\cap A|<\infty \} \]
of countable subsets of $S$ which are ``finite on {\sf F}''.
If $S$ is a metric space, the usual choice is ${\sf F}=\{ A\subseteq S\,|\,A\:\mbox{is bounded}\}$. However, in our context $C^{\sf F}(S)$ should be considered as ``the right'' space for $\sigma$-finite cr-sets. Therefore it will be useful later to let $\sf F$ be a countable covering of $S$, i.e. ${\sf F}=\{A_{n}\,|\,n\in\N\}$ with $S=\bigcup_{n\in\N}A_{n}$.

In order to introduce counting $\sigma$-fields in $C^{\sf F}(S)$, let $N_{A}^{\sf F}: C^{\sf F}(S)\to \N_{0}\cup\{\infty\}$, $M\mapsto |M\cap A|$ be the restriction of $N_{A}$ to $C^{\sf F}(S)$ for all $A\subseteq S$. We then define \[ \mathcal{C}^{\sf F}(\mathcal{T}):=\sigma(N_{A}^{\sf F}\,|\,A\in\mathcal{T})\] for any non-empty collection of ``test sets'' $\mathcal{T}\subseteq \mathcal{P}(S)$.

 Let $\mathfrak{X}\neq \emptyset$ be any set. For any collection $\mathcal{E}\subseteq \mathcal{P}(\mathfrak{X})$ and $A\in\mathcal{P}(\mathfrak{X})$ define $\mathcal{E}\cap A:=\{E\cap A\,|\, E\in \mathcal{E}\}$. If $\mathcal{E}$ is a $\sigma$-field on $\mathfrak{X}$, then $\mathcal{E}\cap A$ is a $\sigma$-field on $A$, which is called the \emph{trace $\sigma$-field} of $\mathcal{E}$ on $A$. Remember that $\sigma(\mathcal{E})\cap A$ is generated by $\mathcal{E}\cap A$ for any non-empty $\mathcal{E}\subseteq \mathcal{P}(\mathfrak{X})$ and $A\in\mathcal{P}(\mathfrak{X})$.

 This last statement implies in our context that $\mathcal{C}^{\sf F}(\mathcal{T})$ is the trace $\sigma$-field of $\mathcal{C}(\mathcal{T})$ on $C^{\sf F}(S)$, i.e. $\mathcal{C}^{\sf F}(\mathcal{T})= \mathcal{C}(\mathcal{T})\cap C^{\sf F}(S)$. Note that a map $\pi:\Om\to C^{\sf F}(S)$ can be considered having $C(S)$ as target set. Then $\pi$ is $\mathcal{F}$-$\mathcal{C}^{\sf F}(\mathcal{T})$  measurable if and only if it is $\mathcal{F}$-$\mathcal{C}(\mathcal{T})$  measurable.

The following theorem is a generalisation of \cite[Lemma 1.4]{kallenberg2}, which has a similar proof. Note that Theorem \ref{theo-generator2} and its proof work for random measures in general as well.

\begin{theorem} \label{theo-generator2}
Let $\mathcal{E}$ be a $\cap$-stable collection generating $\mathcal{S}$. If there exists $\{E_{n}\,|\,n\in\N\}\subseteq \mathcal{E}\cap {\sf F}$ with $S=\bigcup_{n\in\N}E_{n}$, then $\{N^{\sf F}_{E}\,|\,E\in\mathcal{E}\}$ generates $\mathcal{C}^{\sf F}(\mathcal{S})$.
\end{theorem}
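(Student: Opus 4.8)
The plan is to show that the $\sigma$-field $\mathcal{G}:=\sigma(N^{\sf F}_E\,|\,E\in\mathcal{E})$ coincides with $\mathcal{C}^{\sf F}(\mathcal{S})=\sigma(N^{\sf F}_A\,|\,A\in\mathcal{S})$. One inclusion, $\mathcal{G}\subseteq\mathcal{C}^{\sf F}(\mathcal{S})$, is immediate since $\mathcal{E}\subseteq\mathcal{S}$. For the reverse inclusion it suffices to prove that $N^{\sf F}_A$ is $\mathcal{G}$-measurable for every $A\in\mathcal{S}$. First I would reduce to a \emph{local} statement: fix the covering sets $E_n\in\mathcal{E}\cap{\sf F}$ with $\bigcup_n E_n=S$, put $F_m:=E_1\cup\dots\cup E_m$, and observe that for any $M\in C^{\sf F}(S)$ we have $N^{\sf F}_{A\cap F_m}(M)\uparrow N^{\sf F}_A(M)$ as $m\to\infty$ (this is where finiteness on ${\sf F}$ is used: each $M\cap E_n$ is finite, so monotone convergence of counting measures holds pointwise, with no ambiguity even when $N^{\sf F}_A(M)=\infty$). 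Hence $N^{\sf F}_A=\lim_m N^{\sf F}_{A\cap F_m}$, and it is enough to show $N^{\sf F}_{A\cap F_m}$ is $\mathcal{G}$-measurable for every $A\in\mathcal{S}$ and every $m$.

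Next I would fix $m$ and consider the class $\mathcal{D}_m:=\{A\in\mathcal{S}\,|\,N^{\sf F}_{A\cap F_m}\text{ is }\mathcal{G}\text{-measurable}\}$. The aim is to show $\mathcal{D}_m=\mathcal{S}$ via a Dynkin (monotone class) argument: $\mathcal{E}\subseteq\mathcal{D}_m$ and $\mathcal{E}$ is $\cap$-stable and generates $\mathcal{S}$, so it suffices to check that $\mathcal{D}_m$ is a $\lambda$-system. That $\mathcal{E}\subseteq\mathcal{D}_m$ needs a small remark: for $E\in\mathcal{E}$, $E\cap F_m=\bigcup_{j\le m}(E\cap E_j)$, and each $E\cap E_j\in\mathcal{E}$ by $\cap$-stability, while the union of $m$ sets from $\mathcal{E}$ has its counting function expressible through inclusion–exclusion over the finite family $\{E\cap E_j\}_{j\le m}$ — a finite algebraic combination of the $N^{\sf F}_{\bigcap_{j\in J}(E\cap E_j)}$, all of which lie in $\mathcal{G}$ because the intersections are again in $\mathcal{E}$. (Alternatively, and more cleanly, one first shows directly that $N^{\sf F}_{E_1\cup\dots\cup E_k}$ is $\mathcal{G}$-measurable for $E_i\in\mathcal{E}$ by induction using $N_{U\cup E}=N_U+N_E-N_{U\cap E}$ together with $\cap$-stability; since the relevant counting values are finite on $C^{\sf F}(S)$ these subtractions are legitimate.)

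It then remains to verify the $\lambda$-system properties of $\mathcal{D}_m$. Closure under proper differences: if $A,B\in\mathcal{D}_m$ with $A\subseteq B$, then $(B\setminus A)\cap F_m=(B\cap F_m)\setminus(A\cap F_m)$ and $A\cap F_m\subseteq B\cap F_m$, so $N^{\sf F}_{(B\setminus A)\cap F_m}=N^{\sf F}_{B\cap F_m}-N^{\sf F}_{A\cap F_m}$; the subtraction is well-defined and measurable because $N^{\sf F}_{B\cap F_m}\le N_{F_m}$ is finite on $C^{\sf F}(S)$ (here $F_m$ is a finite union of ${\sf F}$-sets). Closure under increasing countable unions: if $A_k\uparrow A$ with $A_k\in\mathcal{D}_m$, then $N^{\sf F}_{A_k\cap F_m}\uparrow N^{\sf F}_{A\cap F_m}$ pointwise, so the limit is $\mathcal{G}$-measurable. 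And $S\in\mathcal{D}_m$ since $S\cap F_m=F_m$ is a finite union of $\mathcal{E}$-sets, handled above. By Dynkin's lemma $\mathcal{D}_m=\sigma(\mathcal{E})=\mathcal{S}$, and letting $m\to\infty$ gives $\mathcal{G}$-measurability of $N^{\sf F}_A$ for all $A\in\mathcal{S}$, hence $\mathcal{C}^{\sf F}(\mathcal{S})\subseteq\mathcal{G}$.

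The main obstacle is bookkeeping with the value $\infty$: all the subtractions and differences of counting functions used in both the ``$\mathcal{E}\subseteq\mathcal{D}_m$'' step and the $\lambda$-system step must take place inside $C^{\sf F}(S)$, where intersecting with $F_m$ (a finite union of ${\sf F}$-sets) guarantees finiteness, so that $N^{\sf F}_{B\cap F_m}-N^{\sf F}_{A\cap F_m}$ is an honest $\mathbb{N}_0$-valued difference rather than an indeterminate $\infty-\infty$. This is precisely the point at which the hypothesis that the covering sets lie in ${\sf F}$ (and not merely in $\mathcal{S}$) is essential, and it is the reason the theorem is false on the whole of $C(S)$ — exactly the phenomenon explored in Section 3.
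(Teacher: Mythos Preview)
Your proof is correct and follows essentially the same Dynkin $\lambda$-system strategy as the paper, with one small variation worth noting. The paper localises to a \emph{single} covering set $E_n\in\mathcal{E}\cap{\sf F}$ and defines $\mathcal{D}^{(n)}=\{A\in\mathcal{S}\,|\,N^{\sf F}_{A\cap E_n}\text{ is }\mathcal{C}^{\sf F}(\mathcal{E})\text{-measurable}\}$, whereas you localise to the finite union $F_m=E_1\cup\cdots\cup E_m$. The paper's choice makes the inclusion $\mathcal{E}\subseteq\mathcal{D}^{(n)}$ and the condition $S\in\mathcal{D}^{(n)}$ completely trivial (since $A\cap E_n\in\mathcal{E}$ by $\cap$-stability and $S\cap E_n=E_n\in\mathcal{E}$), at the cost of a disjointification $N^{\sf F}_A=\sum_n N^{\sf F}_{(A\cap E_n)\setminus(A\cap(E_0\cup\cdots\cup E_{n-1}))}$ at the end. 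Your choice makes the final limit $N^{\sf F}_{A\cap F_m}\uparrow N^{\sf F}_A$ simpler, but you then need the inclusion--exclusion detour to handle $\mathcal{E}\subseteq\mathcal{D}_m$ and $S\in\mathcal{D}_m$, since $E\cap F_m$ and $F_m$ are finite unions of $\mathcal{E}$-sets rather than $\mathcal{E}$-sets themselves. Both routes work; the paper's is slightly more economical.

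One minor remark: your parenthetical ``this is where finiteness on ${\sf F}$ is used'' attached to $N^{\sf F}_{A\cap F_m}\uparrow N^{\sf F}_A$ is misplaced---monotone convergence from below holds for counting measures unconditionally. Finiteness on ${\sf F}$ is genuinely needed exactly where you later invoke it: in the proper-difference step of the $\lambda$-system and in the inclusion--exclusion identities, to ensure the subtractions are well-defined.
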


\begin{proof}[\normalfont\bfseries Proof.]
 The relation $\mathcal{C}^{\sf F}(\mathcal{E})\subseteq \mathcal{C}^{\sf F}(\mathcal{S})$ is trivial. In order to prove the other inclusion, we first establish \begin{equation}\mathcal{C}^{\sf F}(\mathcal{S}\cap E_{n})\subseteq \mathcal{C}^{\sf F}(\mathcal{E})\;\;\mbox{for all}\;n\in\N.\label{eq-generator2}\end{equation} 
For this purpose, define $\mathcal{D}^{(n)}:=\{A\in\mathcal{S}\,|\,N^{\sf F}_{A\cap E_{n}}\:\mbox{is}\:\mathcal{C}^{\sf F}(\mathcal{E})\mbox{-measurable}\}$ and show that $\mathcal{D}^{(n)}$ is a $\lambda$-system for all $n\in\N$. For the definition of $\lambda$-systems, see \cite[Chapter 1]{kallenberg}. The condition $S\in\mathcal{D}^{(n)}$ is trivial. If  $A_{m}\subseteq S$ with $A_{m}\subseteq A_{m+1}$ for all $m\in\N$ and $A=\bigcup_{m\in\N}A_{m}$, we have 
\[ N^{\sf F}_{A}(M)=\lim_{m\to\infty}N^{\sf F}_{A_{m}}(M)\;\;\;\mbox{for all}\;M\in C^{\sf F}(S). \]
From this we conclude that $\mathcal{D}^{(n)}$ is closed under formation of increasing limits. So far things would have worked in $C(S)$.  Next, we are going to use the finiteness condition. Since $E_{n}\in  {\sf F}$, we have $N^{\sf F}_{A\cap E_{n}}(M)<\infty$ for all $A\subseteq S$ and $M\in C^{\sf F}(S)$. Therefore we get
\[ N^{\sf F}_{(B\setminus A)\cap E_{n}}(M)=N^{\sf F}_{(B\cap E_{n})\setminus(A\cap E_{n})}(M)=N^{\sf F}_{B\cap E_{n}}(M)-N^{\sf F}_{A\cap E_{n}}(M)\] for all $A\subseteq B\subseteq S$ and $M\in C^{\sf F}(S)$. This implies that $\mathcal{D}^{(n)}$ is closed under proper differences, and we have shown that $\mathcal{D}^{(n)}$ is a $\lambda$-system.
Furthermore, since $\mathcal{E}$ is $\cap$-stable, it follows that $\mathcal{E}\subseteq \mathcal{D}^{(n)}$. Hence we conclude $\mathcal{S}=\sigma(\mathcal{E})\subseteq \mathcal{D}^{(n)}$ by \cite[Theorem 1.1]{kallenberg}. Now, since $N^{\sf F}_{A\cap E_{n}}$ is $\mathcal{C}^{\sf F}(\mathcal{E})$-measurable for all $A\in\mathcal{S}$, we have established (\ref{eq-generator2}).

Finally, we show that $N^{\sf F}_{A}$ is $\mathcal{C}^{\sf F}(\mathcal{E})$-measurable for all $A\in\mathcal{S}$, which yields 
$\mathcal{C}^{\sf F}(\mathcal{S})\subseteq \mathcal{C}^{\sf F}(\mathcal{E})$. Let $A\in\mathcal{S}$ and define $E_{0}:=\emptyset$. Since $S=\bigcup_{n\in\N}E_{n}$, the set $A$ is the union of the pairwise disjoint sets $(A\cap E_{n})\setminus (A\cap (E_{0}\cup\cdots\cup E_{n-1}))$, $n\in\N$. Hence we conclude
\begin{equation}\label{eq-generator22} N^{\sf F}_{A}(M)=\sum_{n\in\N}N^{\sf F}_{(A\cap E_{n})\setminus (A\cap (E_{0}\cup\cdots\cup E_{n-1}))}(M)\;\;\;\mbox{for all}\;M\in C^{\sf F}(S). \end{equation}
By (\ref{eq-generator2}) the maps $N^{\sf F}_{(A\cap E_{n})\setminus (A\cap (E_{0}\cup\cdots\cup E_{n-1}))}$ are $\mathcal{C}^{\sf F}(\mathcal{E})$-measurable and (\ref{eq-generator22}) implies that so is $N^{\sf F}_{A}$.\end{proof}

\begin{proof}[\normalfont\bf Proof of Theorem \ref{sigma-theorem}.]
Since $\pi_{1}$ and $\pi_{2}$ are $\sigma$-finite on $\mathcal{E}$, there exists a common covering $\{E_{n}\,|\,n\in\N\}\subseteq \mathcal{E}$ with $S=\bigcup_{n\in\N}E_{n}$ and $|\pi_{i}\cap E_{n}|<\infty$ for all $n\in\N$, $i=1,2$. Therefore, defining ${\sf F}:=\{E_{n}\,|\,n\in\N\}$, we can consider $\pi_{1}$ and $\pi_{2}$ to be maps $\Om\to C^{\sf F}(S)$.
Hence we have \begin{equation}\label{dkn} P_{\pi_{i}}(A)=P_{\pi_{i}}(A\cap C^{\sf F}(S))\;\; \mbox{for all}\; A\in\mathcal{C}(\mathcal{S})\;\mbox{and}\;i=1,2,\end{equation} and it suffices to show $P_{\pi_{1}}=P_{\pi_{2}}$ on the trace $\sigma$-field $\mathcal{C}^{\sf F}(\mathcal{S})=\mathcal{C}(\mathcal{S})\cap C^{\sf F}(S)$.\\ We deduce from Theorem \ref{theo-generator2} that the sets of the form
\[ \{N^{\sf F}_{A_{1}}=k_{1},\ldots, N^{\sf F}_{A_{n}}=k_{n}\},\;\;n\in\N,A_{j}\in\mathcal{E},k_{j}\in\N_{0}\cup\{\infty\},\]
are a $\cap$-stable generator of $\mathcal{C}^{\sf F}(\mathcal{S})$. By (\ref{eq-theosigma}) and (\ref{dkn}) and by \[ \{N^{\sf F}_{A_{1}}=k_{1},\ldots, N^{\sf F}_{A_{n}}=k_{n}\}=\{N_{A_{1}}=k_{1},\ldots, N_{A_{n}}=k_{n}\}\cap C^{\sf F}(S)\] we conclude that $P_{\pi_{1}}$ and $P_{\pi_{2}}$ coincide on that generator. Therefore the uniqueness theorem of measure theory yields $P_{\pi_{1}}=P_{\pi_{2}}$.\end{proof}

%%%%%%%%%%%%%%%%%%%%%%%%%%%%%%%%%%%%%%%%%%%%%%%%%%%%%%%%%%%%%%%%%%%%%%%%%%%%%%%%%%%%%%%%%%%%%%%%
%%%%%%%%%%%%%%%%%%%%%%%%%%%%%%%%%%%%%%%%%%%%%%%%%%%%%%%%%%%%%%%%%%%%%%%%%%%%%%%%%%%%%%%%%%%%%%%%
%%%%%%%%%%%%%%%%%%%%%%%%%%%%%%%%%%%%%%%%%%%%%%%%%%%%%%%%%%%%%%%%%%%%%%%%%%%%%%%%%%%%%%%%%%%%%%%%
%%%%%%%%%%%%%%%%%%%%%%%%%%%%%%%%%%%%%%%%%%%%%%%%%%%%%%%%%%%%%%%%%%%%%%%%%%%%%%%%%%%%%%%%%%%%%%%%

\subsection{Hitting Functions and Determining Classes} 
\setcounter{theorem}{0}

Let $\pi:\Om\to C(S)$ be a cr-set. We will use the following notation for the hitting function of $\pi$:
\[ T_{\pi}:\mathcal{S}\rightarrow [0,1],\:A\mapsto P(\pi\cap A\neq \emptyset). \]
If $\Delta\in\mathcal{S}\otimes\mathcal{S}$, then Corollary \ref{prop2-3} and the uniqueness theorem of measure theory ensure that the law of $\pi$ is uniquely
determined by $T_{\pi}$. Since this fact is the central motivation for this section, we put it down in a proposition:

\begin{prop}\label{prop-central}
 Let $\pi_{1},\pi_{2}:\Om\to C(S)$ be two cr-sets. If $\Delta\in\mathcal{S}\otimes\mathcal{S}$ and if $T_{\pi_{1}}(A)=T_{\pi_{2}}(A)$ for all $A\in\mathcal{S}$, then $\pi_{1}$ and $\pi_{2}$ are equal in law on $(C(S),\mathcal{C}(\mathcal{S}))$.
\end{prop}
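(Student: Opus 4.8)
The plan is to reduce the statement to an application of the uniqueness theorem of measure theory, using the $\cap$-stable generator of $\mathcal{C}(\mathcal{S})$ provided by Corollary \ref{prop2-3}. Since $\Delta\in\mathcal{S}\otimes\mathcal{S}$, that corollary tells us the system $\mathcal{G}:=\{\{N_{A}=0\}\mid A\in\mathcal{S}\}$ is $\cap$-stable and generates $\mathcal{C}(\mathcal{S})$. Moreover $C(S)=\{N_{\emptyset}=0\}\in\mathcal{G}$, so $\mathcal{G}$ contains the whole space (or one covers $C(S)$ by countably many members of $\mathcal{G}$ trivially, since it already lies in $\mathcal{G}$). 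Hence by \cite[Lemma 1.17]{kallenberg} any two probability measures on $(C(S),\mathcal{C}(\mathcal{S}))$ agreeing on $\mathcal{G}$ must coincide.

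The second and essentially only substantive step is to observe that the hitting function determines the values of $P_{\pi}$ on $\mathcal{G}$. For every $A\in\mathcal{S}$ we have the set identity $\{N_{A}=0\}=\{M\in C(S)\mid M\cap A=\emptyset\}$, so $\pi^{-1}(\{N_{A}=0\})=\{\pi\cap A=\emptyset\}$ and therefore
\[ P_{\pi_{i}}(\{N_{A}=0\})=P(\pi_{i}\cap A=\emptyset)=1-T_{\pi_{i}}(A). \]
By hypothesis $T_{\pi_{1}}(A)=T_{\pi_{2}}(A)$ for all $A\in\mathcal{S}$, so $P_{\pi_{1}}$ and $P_{\pi_{2}}$ agree on $\mathcal{G}$. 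Combining this with the previous paragraph gives $P_{\pi_{1}}=P_{\pi_{2}}$ on $\mathcal{C}(\mathcal{S})$, which is exactly the assertion that $\pi_{1}$ and $\pi_{2}$ are equal in law.

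There is no real obstacle here; the content has already been front-loaded into Corollary \ref{prop2-3} (whose proof used that $\Delta\in\mathcal{S}\otimes\mathcal{S}$ makes $\mathcal{S}$ self-dissecting, via Theorem \ref{theo-hit}). The only point to be slightly careful about is verifying the hypotheses of the measure-theoretic uniqueness lemma: one needs a $\cap$-stable generator, which $\mathcal{G}$ is, and one needs either that it contains the full space or a sequence exhausting it; since $\{N_{\emptyset}=0\}=C(S)$ lies in $\mathcal{G}$, this is immediate. Everything else is the routine translation between the "miss" events $\{N_{A}=0\}$ in $C(S)$ and the hitting probabilities $T_{\pi}(A)$, together with the observation that $P_{\pi_{i}}$ is by definition the pushforward of $P$ under $\pi_{i}$, so values of $P_{\pi_{i}}$ on preimages of generating sets are governed precisely by the hitting function.
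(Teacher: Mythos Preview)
Your proof is correct and follows exactly the approach the paper indicates: it invokes Corollary \ref{prop2-3} to obtain the $\cap$-stable generator $\{\{N_{A}=0\}\mid A\in\mathcal{S}\}$ of $\mathcal{C}(\mathcal{S})$ and then applies the uniqueness theorem of measure theory, noting that $T_{\pi_i}$ determines $P_{\pi_i}$ on this generator. The paper does not spell out a separate proof but simply remarks before the proposition that it follows from Corollary \ref{prop2-3} together with the uniqueness theorem, which is precisely what you have written out in detail.
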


In the following we are going to study those functions $T_{\pi}$, in order to find preferably simple classes $\mathcal{E}\subsetneq \mathcal{S}$ and conditions such that $T_{\pi}$ is determined by its values on $\mathcal{E}$. 
Proposition \ref{prop-central} then shows that such classes $\mathcal{E}$ determine also the law of $\pi$.

We start by listing some basic but important properties of hitting functions. Let $T:\mathcal{S}\to [0,1]$ be the hitting function of a cr-set, then 

\begin{enumerate}
 \item[$\bullet$] $T(\emptyset)=0$,
 \item[$\bullet$] $T$  is {\it monotone}, i.e. $A\subseteq B$ implies $T(A)\leq T(B)$ for all $A,B\in\mathcal{S}$,
 \item[$\bullet$] $T$ is {\it $\sigma$-subadditive}, i.e. $T(\bigcup_{n\in\N}A_{n})\leq \sum_{n\in\N}T(A_{n})$ for all $A_{n}\in\mathcal{S}$,
 \item[$\bullet$] $T$ is {\it continuous from below}, i.e. $T(A_{n})\to T(A)$, if $A_{n}\in \mathcal{S}$ with $A_{n}\subseteq A_{n+1}$ for all $n\in\nz$ and $A=\bigcup_{n\in\nz}A_{n}$.
\end{enumerate}

A hitting function $T$, satisfying $T(A_{n})\to T(A)$ if $A_{n}\in\mathcal{S}$ with $A_{n+1}\subseteq A_{n}$ for all $n\in\nz$ and $A=\bigcap_{n\in\nz}A_{n}$,
is called {\it continuous from above} or simply {\it continuous}. In general $T$ is not continuous from above. However, we can always find a determining class for continuous hitting functions by the following proposition.

\begin{prop} \label{prop2-4}
 Let $\mathcal{R}\subseteq \mathcal{S}$ be a ring with $\sigma(\mathcal{R})=\mathcal{S}$ and let $T_{1},T_{2}:\mathcal{S}\to [0,\infty]$ be two continuous hitting functions. If there exist $E_{n}\in\mathcal{R}$, $n\in\nz$, covering $S$ and if $T_{1}(A)=T_{2}(A)$ for all $A\in \mathcal{R}$, then $T_{1}=T_{2}$. 
\end{prop}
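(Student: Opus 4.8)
The plan is to extend the equality $T_1 = T_2$ from the ring $\mathcal{R}$ to all of $\mathcal{S} = \sigma(\mathcal{R})$ by a monotone-class / Dynkin argument, exploiting the two forms of continuity. First I would use the covering sets $E_n \in \mathcal{R}$ to localise: set $F_n := E_1 \cup \cdots \cup E_n \in \mathcal{R}$, so that $F_n \uparrow S$. Since both $T_i$ are continuous from below, for any $A \in \mathcal{S}$ we have $T_i(A) = \lim_n T_i(A \cap F_n)$, so it suffices to prove $T_1(A \cap F_n) = T_2(A \cap F_n)$ for every $A \in \mathcal{S}$ and every $n$. Thus we may work inside a fixed ``ambient'' set $F_n$ from the ring.

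Next I would fix $n$ and consider the class
\[
\mathcal{D}_n := \{ A \in \mathcal{S} \mid T_1(A \cap F_n) = T_2(A \cap F_n) \}.
\]
The claim is that $\mathcal{D}_n$ is a $\lambda$-system (Dynkin system) containing $\mathcal{R}$. It contains $S$ because $S \cap F_n = F_n \in \mathcal{R}$; it is closed under increasing unions because both $T_i$ are continuous from below and $(\bigcup_m A_m) \cap F_n = \bigcup_m (A_m \cap F_n)$. For closure under proper differences, the key point is that for $A \subseteq B$ in $\mathcal{S}$ one has $B \cap F_n = (A \cap F_n) \,\dot\cup\, ((B \setminus A) \cap F_n)$, but hitting functions are only subadditive, not additive, so one cannot simply subtract. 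Here is where continuity from above enters, together with the restriction to the ring: I would instead verify directly that $\mathcal{R}$ is contained in a $\lambda$-system by a different route — namely, first show $T_1 = T_2$ on the ring $\mathcal{R}_n := \mathcal{R} \cap F_n$ (which is a ring of subsets of $F_n$ generating the trace $\sigma$-field $\mathcal{S} \cap F_n$), and then upgrade from $\mathcal{R}_n$ to $\mathcal{S} \cap F_n$. The honest way to do the upgrade for set functions that are only monotone and continuous (from above and below) is: the collection of $A \subseteq F_n$ on which $T_1$ and $T_2$ agree is closed under monotone increasing limits (continuity from below) and monotone decreasing limits (continuity from above), hence is a monotone class in the sense of sequential limits; since it contains the ring $\mathcal{R}_n$, and since the monotone class generated by a ring is the generated $\sigma$-field (the monotone class theorem), we get agreement on all of $\mathcal{S} \cap F_n$.

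So the skeleton is: (1) reduce to the traces on $F_n$ via continuity from below; (2) note $\mathcal{R} \cap F_n$ is a ring generating $\mathcal{S} \cap F_n$ and $T_1, T_2$ agree there; (3) observe the agreement set is stable under countable increasing and decreasing limits, using continuity from below and continuity from above respectively; (4) invoke the monotone class theorem for rings to conclude the agreement set equals $\mathcal{S} \cap F_n$; (5) let $n \to \infty$ once more using continuity from below to recover $T_1(A) = T_2(A)$ for all $A \in \mathcal{S}$. The main obstacle is step (3)–(4): one must be careful that the monotone class theorem applies — it requires a \emph{ring} and stability under \emph{both} monotone limits, which is precisely why the hypothesis of two-sided continuity (not just continuity from below) is indispensable and why subadditivity alone is not enough. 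A secondary subtlety is checking that $T_1, T_2$ restricted to subsets of $F_n$ are genuinely continuous from above there; but since $F_n$ has finite ``hitting mass'' is not assumed, one should phrase the decreasing-limit stability purely in terms of the given continuity-from-above hypothesis, which holds on all of $\mathcal{S}$ by assumption and hence on the trace.
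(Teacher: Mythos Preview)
Your final argument is correct and uses the same key tool as the paper --- the monotone class theorem applied to the agreement set $\mathcal{M}=\{A\in\mathcal{S}\mid T_1(A)=T_2(A)\}$, which is a monotone class by two-sided continuity and contains the ring $\mathcal{R}$. The paper, however, does this in one stroke without localising: it applies the monotone class theorem directly on $\mathcal{S}$ to get that $\mathcal{M}$ contains the $\sigma$-ring $\Sigma$ generated by $\mathcal{R}$, and then uses the covering $S=\bigcup_n E_n$ only to observe $S\in\Sigma$, so that $\Sigma$ is already the $\sigma$-field $\mathcal{S}$. Your reduction to the traces $F_n$ and the passage $n\to\infty$ are correct but unnecessary; likewise the initial detour through a $\lambda$-system (which you rightly abandon) can be dropped entirely.
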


\begin{proof}[\normalfont\bfseries Proof] The continuity yields that $\mathcal{M}:=\{A\in\mathcal{S}\,|\,T_{1}(A)=T_{2}(A)\}$ is a monotone class. By $\mathcal{R}\subseteq \mathcal{M}$ and the monotone class theorem \cite[\S 6 Theorem B.]{Halmos},  $\mathcal{M}$ includes the smallest $\sigma$-ring containing $\mathcal{R}$, which we call $\Sigma$. Since $S=\bigcup_{n\in\nz}E_{n}$ with $E_{n}\in\mathcal{R}$, it follows that $S\in\Sigma$ and $\Sigma$ is a $\sigma$-field. From $\sigma(\mathcal{R})=\mathcal{S}$ it follows that $\Sigma=\mathcal{S}$, which leads to $\mathcal{M}=\mathcal{S}$.\end{proof}

\begin{prop} \label{prop2-5}
Let $\pi:\Om\to C(S)$ be a finite cr-set. Then $T_{\pi}$ is continuous from above.
 \end{prop}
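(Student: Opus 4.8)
The plan is to reduce the claimed continuity of $T_{\pi}$ from above to the continuity from above of the probability measure $P$. Fix a decreasing sequence $A_{n}\in\mathcal{S}$, $A_{n+1}\subseteq A_{n}$, with $A=\bigcap_{n\in\N}A_{n}$. Since $\pi$ is a cr-set, each $N_{A_{n}}$ and $N_{A}$ is measurable, so the events $H_{n}:=\{\pi\cap A_{n}\neq\emptyset\}=\{N_{A_{n}}(\pi)\neq 0\}$ and $H:=\{\pi\cap A\neq\emptyset\}$ lie in $\mathcal{F}$, and $(H_{n})_{n\in\N}$ is decreasing because $A\subseteq\cdots\subseteq A_{n+1}\subseteq A_{n}$. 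The key step is to establish the exact set identity $\bigcap_{n\in\N}H_{n}=H$.

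The inclusion $H\subseteq\bigcap_{n}H_{n}$ is immediate from $A\subseteq A_{n}$. For the reverse inclusion, this is where finiteness of $\pi$ enters: if $\om\in\bigcap_{n}H_{n}$, choose for each $n$ a point $x_{n}\in\pi(\om)\cap A_{n}$; since $\pi(\om)$ is a finite set, some $x\in\pi(\om)$ satisfies $x=x_{n}$ for infinitely many $n$, and then, using that $(A_{n})$ is decreasing (so that for every $m$ one may pick such an $n\geq m$ with $x\in A_{n}\subseteq A_{m}$), we get $x\in A_{m}$ for all $m$, hence $x\in\pi(\om)\cap A$ and $\om\in H$.

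Once the identity $\bigcap_{n}H_{n}=H$ is in place, continuity from above of $P$ (applicable since $P(H_{1})\leq 1<\infty$) yields $T_{\pi}(A_{n})=P(H_{n})\to P(H)=T_{\pi}(A)$, which is precisely continuity from above of $T_{\pi}$. I expect no serious obstacle; the only point needing care is the pigeonhole argument exploiting $|\pi(\om)|<\infty$, together with the observation that without finiteness the set identity can fail — a random point of accumulation could be hit by every $A_{n}$ while $A$ is missed — which is exactly why the hypothesis that $\pi$ be finite is required.
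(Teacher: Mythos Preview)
Your proof is correct and follows essentially the same approach as the paper: establish the set identity $\bigcap_{n}\{\pi\cap A_{n}\neq\emptyset\}=\{\pi\cap A\neq\emptyset\}$ via a pigeonhole argument on the finite set $\pi(\om)$, then invoke continuity from above of $P$. The paper phrases the pigeonhole step as a contradiction (if each $x\in\pi(\om)$ lay in only finitely many $A_{n}$, then $\pi(\om)\cap A_{n}\neq\emptyset$ for only finitely many $n$), whereas you pick witnesses $x_{n}$ explicitly, but the argument is the same.
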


\begin{proof}[\normalfont\bfseries Proof]
 Suppose $A_{n}\in\mathcal{S}$ with $A_{n+1}\subseteq A_{n}$ for all $n\in\N$ and $A=\bigcap_{n\in\N}A_{n}$.
 The relation $\{\pi\cap A\neq \emptyset\}\subseteq \bigcap_{n\in\N}\{\pi\cap A_{n}\neq\emptyset\}$ holds
 for any cr-set $\pi$. In order to show the other inclusion, let $\om$ be an element of $\bigcap_{n\in\N}\{\pi\cap A_{n}\neq\emptyset\}$.
 Since $\pi(\om)$ is finite, there exists $x\in\pi(\om)$ lying in infinitely many $A_{n}$. Otherwise $\pi(\om)\cap A_{n}\neq \emptyset$ would hold for only finitely many $n\in\N$ -- a contradiction. Because the sets $A_{n}$ are decreasing, it follows that $x\in A$ and hence $\om\in\{\pi\cap A\neq \emptyset\}$.\\ Therefore $T_{\pi}(A_{n})\to T_{\pi}(A)$ follows from $\{\pi\cap A_{n+1}\neq \emptyset\}\subseteq \{\pi\cap A_{n}\neq \emptyset\}$ for all $n\in\N$ and the continuity of $P$.\end{proof}

Next, we introduce notations for set-systems, that are going to serve as determining classes in the following. For this purpose, let $\emptyset\neq \mathcal{E}$ be a family of subsets of $S$. As usual, $\mathcal{E}_{\sigma}$ denotes the collection of all countable unions of $\mathcal{E}$-sets and $\mathcal{E}_{\delta}$ denotes the collection of all countable intersections respectively. Note that $\emptyset$ as the empty union shall not necessarily belong to $\mathcal{E}_{\sigma}$ such that $\emptyset\in \mathcal{E}_{\sigma}$ implies $\emptyset \in \mathcal{E}$. The same shall apply to $S$ with respect to $\mathcal{E}_{\delta}$. In addition to that, define $\mathcal{E}^{c}:=\{E^{c}\,|\,E\in\mathcal{E}\}$, $\mathcal{E}_{ext}:=\mathcal{E}_{\sigma}$ and $\mathcal{E}_{int}:= (\mathcal{E}^{c})_{\delta}=\{\bigcap_{n\in\N}E_{n}^{c}\,|\,E_{n}\in\mathcal{E}\}$. 

In the context of determining classes, we imagine
$\mathcal{E}_{int}$ to serve as a set-system for approximation of $\mathcal{S}$-sets from within, whereas $\mathcal{E}_{ext}$ is supposed to approximate $\mathcal{S}$-sets from the outside. Again we list some basic statements. 

\smallskip

\begin{enumerate}
 \item[$\bullet$] $(\mathcal{E}_{ext})^{c}=\mathcal{E}_{int}$.
 \item[$\bullet$] $\mathcal{E}\subseteq (\mathcal{E}_{int})_{\sigma}$ if and only if $\mathcal{E}^{c}\subseteq (\mathcal{E}_{ext})_{\delta}$.
 \item[$\bullet$] $\mathcal{E}_{ext}$ is closed under formation of countable unions.
 \item[$\bullet$] $\mathcal{E}_{int}$ is closed under formation of countable intersections.
 \item[$\bullet$] If $\mathcal{E}$ is $\cap$-stable, then $\mathcal{E}_{ext}$ is $\cap$-stable and $\mathcal{E}_{int}$ is $\cup$-stable.
\end{enumerate}

\smallskip

It will be an important technical assumption for us in the following that $\mathcal{E}$ is  $\cap$-stable with $\emptyset\in\mathcal{E}$ and $\mathcal{E}\subseteq (\mathcal{E}_{int})_{\sigma}$. Relating to these notions, we give two examples, that we have mainly in mind.

\begin{enumerate}
 \item[(A)] Let $\mathcal{E}$ be a semiring with $S=\bigcup_{n\in\N}E_{n}$ for suitable $E_{n}\in\mathcal{E}$. Then
 $\mathcal{E}$ is $\cap$-stable and satisfies $\mathcal{E}\subseteq (\mathcal{E}_{int})_{\sigma}$ as well as $\emptyset\in\mathcal{E}$.
\item[(B)] Let $(S,\mathcal{G})$ be a topological space, in which all closed sets are countable intersections of open sets, i.e. $\mathcal{G}^{c}\subseteq \mathcal{G}_{\delta}$. For example, this is the case in all metrizable or perfectly normal topological spaces. Now define $\mathcal{E}:=\mathcal{G}$. Then again $\mathcal{E}=\mathcal{E}_{ext}$ is $\cap$-stable and satisfies $\mathcal{E}\subseteq (\mathcal{E}_{int})_{\sigma}$ as well as $\emptyset\in\mathcal{E}$.
\end{enumerate}

\begin{theorem}\label{theo-dc1}
 Let $\mathcal{E}$ be a $\cap$-stable collection of subsets of $S$ generating $\mathcal{S}$ with $\emptyset\in\mathcal{E}$ and $\mathcal{E}\subseteq (\mathcal{E}_{int})_{\sigma}$. If $T$ is a continuous hitting function, then for all $A\in\mathcal{S}$ and $\varepsilon >0$ there exist $F\in \mathcal{E}_{int}$ and $G\in\mathcal{E}_{ext}$ with $F\subseteq A\subseteq G$ and $T(G\setminus F)\leq \varepsilon$.
\end{theorem}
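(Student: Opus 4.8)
The plan is a Dynkin-type argument. Put
\[
 \mathcal{M}:=\{A\in\mathcal{S}\mid \text{for every }\varepsilon>0\text{ there are }F\in\mathcal{E}_{int},\,G\in\mathcal{E}_{ext}\text{ with }F\subseteq A\subseteq G,\ T(G\setminus F)\leq\varepsilon\}.
\]
The goal is to show that $\mathcal{M}$ contains the $\cap$-stable generator $\mathcal{E}$ and is a $\sigma$-field; since $\sigma(\mathcal{E})=\mathcal{S}$ this forces $\mathcal{M}=\mathcal{S}$, which is the assertion. I will repeatedly use the elementary properties of hitting functions recalled before the theorem ($T(\emptyset)=0$, monotonicity, $\sigma$-subadditivity, and --- crucially --- continuity from above), together with $(\mathcal{E}_{ext})^{c}=\mathcal{E}_{int}$ (hence $(\mathcal{E}_{int})^{c}=\mathcal{E}_{ext}$), closure of $\mathcal{E}_{ext}$ under countable unions, of $\mathcal{E}_{int}$ under countable intersections, and --- because $\mathcal{E}$ is $\cap$-stable --- stability of $\mathcal{E}_{int}$ under \emph{finite} unions.

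First I would show $\mathcal{E}\subseteq\mathcal{M}$. For $A\in\mathcal{E}$ set $G:=A\in\mathcal{E}_{ext}$; since $\mathcal{E}\subseteq(\mathcal{E}_{int})_{\sigma}$ we may write $A=\bigcup_{n}F'_{n}$ with $F'_{n}\in\mathcal{E}_{int}$, and after replacing $F'_{n}$ by $F'_{1}\cup\dots\cup F'_{n}$ (still in $\mathcal{E}_{int}$) we may assume $F'_{n}\uparrow A$. Then $A\setminus F'_{n}\downarrow\emptyset$, so continuity from above gives $T(A\setminus F'_{n})\to 0$, and $F:=F'_{n}$ for large $n$ does the job. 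Stability of $\mathcal{M}$ under complements is immediate: witnesses $F\subseteq A\subseteq G$ turn into $G^{c}\subseteq A^{c}\subseteq F^{c}$ with $G^{c}\in\mathcal{E}_{int}$, $F^{c}\in\mathcal{E}_{ext}$ and $F^{c}\setminus G^{c}=G\setminus F$. Stability under \emph{finite} unions follows by combining $\varepsilon/2$-witnesses through $(G_{1}\cup G_{2})\setminus(F_{1}\cup F_{2})\subseteq(G_{1}\setminus F_{1})\cup(G_{2}\setminus F_{2})$, the $\cup$-stability of $\mathcal{E}_{int}$, and finite subadditivity. In particular $S\in\mathcal{M}$, since $\emptyset\in\mathcal{E}\subseteq\mathcal{M}$ and $\mathcal{M}$ is complement-stable.

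The main step is closure under countable unions. Given $A_{k}\in\mathcal{M}$ with union $A$, replace them by $B_{k}:=A_{1}\cup\dots\cup A_{k}\in\mathcal{M}$, so $B_{k}\uparrow A$; pick witnesses $F_{k}\in\mathcal{E}_{int}$, $G_{k}\in\mathcal{E}_{ext}$ with $F_{k}\subseteq B_{k}\subseteq G_{k}$ and $T(G_{k}\setminus F_{k})\leq\varepsilon 2^{-k-1}$, and put $G:=\bigcup_{k}G_{k}\in\mathcal{E}_{ext}$. A point of $G\setminus A$ lies in some $G_{k}$ but outside $B_{k}\supseteq F_{k}$, so $G\setminus A\subseteq\bigcup_{k}(G_{k}\setminus F_{k})$ and $\sigma$-subadditivity yields $T(G\setminus A)\leq\varepsilon/2$. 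For the inner set, $\bigcup_{k}F_{k}$ need \emph{not} belong to $\mathcal{E}_{int}$, which is the only real obstacle; the remedy is to take $F:=F_{N}$ for a single large $N$. Since $F_{N}\subseteq B_{N}\subseteq A$ and $A\setminus F_{N}=(A\setminus B_{N})\cup(B_{N}\setminus F_{N})$, subadditivity gives $T(A\setminus F_{N})\leq T(A\setminus B_{N})+\varepsilon 2^{-N-1}$; here continuity from above enters once more, because $B_{N}\uparrow A$ means $A\setminus B_{N}\downarrow\emptyset$, so $T(A\setminus B_{N})\to 0$. Picking $N$ with $T(A\setminus B_{N})\leq\varepsilon/4$ and $2^{-N-1}\leq 1/4$ gives $T(A\setminus F_{N})\leq\varepsilon/2$, hence $T(G\setminus F_{N})\leq T(G\setminus A)+T(A\setminus F_{N})\leq\varepsilon$. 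So $A\in\mathcal{M}$; thus $\mathcal{M}$ is a $\sigma$-field containing $\mathcal{E}$, and $\mathcal{M}=\mathcal{S}$.

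I expect the inner approximation in this last step to be the only point requiring genuine care, exactly because $\mathcal{E}_{int}$ is stable under countable \emph{intersections} but not countable unions; replacing $\bigcup_{k}F_{k}$ by a single $F_{N}$ and absorbing the tail $A\setminus B_{N}$ is what makes it work, and it is precisely here --- and in the base case $\mathcal{E}\subseteq\mathcal{M}$ --- that full continuity of $T$, not merely continuity from below, is indispensable.
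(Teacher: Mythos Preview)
Your proof is correct and follows essentially the same approach as the paper: both define the class of well-approximable sets and show it is a $\sigma$-field containing $\mathcal{E}$, with identical treatments of the base case and complements. The only cosmetic difference is in the countable-union step, where the paper applies continuity from above to $G\setminus(F_{1}\cup\cdots\cup F_{n})\downarrow G\setminus\bigcup_{n}F_{n}$ directly, while you first pass to the increasing partial unions $B_{k}$ and apply continuity to $A\setminus B_{N}\downarrow\emptyset$; both devices serve the same purpose of replacing $\bigcup_{k}F_{k}$ by an approximant that actually lies in $\mathcal{E}_{int}$.
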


\begin{proof}[\normalfont\bfseries Proof] Let $\mathcal{A}$ be the set of all $A\in\mathcal{S}$ such that for all $\varepsilon >0$ exist $F\in\mathcal{E}_{int}$ and $G\in\mathcal{E}_{ext}$ with $F\subseteq A\subseteq G$ and $T(G\setminus F)\leq \varepsilon$. We are going to show that $\mathcal{A}$ is a $\sigma$-field containing $\mathcal{E}$.\\
Let $E$ be an element of $\mathcal{E}$ and let $\varepsilon>0$. Define $G:=E$. Since $\mathcal{E}\subseteq (\mathcal{E}_{int})_{\sigma}$, there exist $F_{n}\in\mathcal{E}_{int}$ with $E=\bigcup_{n\in\N}F_{n}$. The sets $G\setminus (F_{1}\cup\cdots \cup F_{n})$, $n\in\N$,  are decreasing and their intersection is empty. Therefore, by continuity from above, there exists $N\in\N$ with $T(G\setminus \bigcup_{n=1}^{N}F_{n})\leq \epsilon$. Now defining $F:=\bigcup_{n=1}^{N}F_{n}$, we conclude $E\in\mathcal{A}$.\\
Since $\emptyset\in\mathcal{E}$ and $\mathcal{E}\subseteq (\mathcal{E}_{int})_{\sigma}$, it follows that $\emptyset\in\mathcal{E}_{int}$ and, consequently, $\emptyset \in\mathcal{A}$. For all sets $A\in\mathcal{A}$ and $F\in\mathcal{E}_{int}$, $G\in\mathcal{E}_{ext}$ with $F\subseteq A\subseteq G$, we observe that $G^{c}\subseteq A^{c}\subseteq F^{c}$ holds as well as $G^{c}\in\mathcal{E}_{int}$, $F^{c}\in\mathcal{E}_{ext}$ and $G\setminus F=F^{c}\setminus G^{c}$. From this we deduce that $\mathcal{A}$ is closed under formation of complements.\\
Finally, let $A_{n}\in \mathcal{A}$, $n\in\N$, and let $\varepsilon >0$. For all $n\in\N$ there exist $F_{n}\in\mathcal{E}_{int}$ and $G_{n}\in\mathcal{E}_{ext}$ with $F_{n}\subseteq A_{n}\subseteq G_{n}$ and $T(G_{n}\setminus F_{n})\leq \varepsilon/2^{n+1}$. Defining $G:=\bigcup_{n\in\N}G_{n}$ and $K:=\bigcup_{n\in\N}F_{n}$, it follows $G\setminus K \subseteq \bigcup_{n\in\N}(G_{n}\setminus F_{n})$
and by subadditivity we obtain \[ T(G\setminus K)\leq \sum_{n\in\N}T(G_{n}\setminus F_{n}) \leq \varepsilon /2. \] Again the sets $G\setminus (F_{1}\cup\cdots \cup F_{n})$, $n\in\N$, are decreasing, their intersection is $G\setminus K$ and therefore continuity and monotonicity yield the existence of $N\in\N$ with
$T(G\setminus \bigcup_{n=1}^{N}F_{n})-T(G\setminus K)\leq \varepsilon /2$. Defining $F:=\bigcup_{n=1}^{N}F_{n}$, we get $F\subseteq \bigcup_{n\in\N}A_{n}\subseteq G$ as well as \[ T(G\setminus F)=\left( T(G\setminus F) - T(G\setminus K)\right) + T(G\setminus K)\leq \varepsilon.\]
Hence we conclude $\bigcup_{n\in\N}A_{n}\in\mathcal{A}$.\end{proof}

\begin{corollary}\label{coro-dc1}
 In the situation of Theorem \ref{theo-dc1}, the continuous hitting function $T$ satisfies
 \[ T(A)=\sup\{ T(F)\,|\,F\in\mathcal{E}_{int},\,F\subseteq A\}=\inf\{T(G)\,|\,G\in\mathcal{E}_{ext},\,A\subseteq G\}\] for all $A\in\mathcal{S}$.
\end{corollary}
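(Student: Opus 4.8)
The plan is to obtain the corollary as an immediate consequence of Theorem \ref{theo-dc1}, using nothing beyond monotonicity and finite subadditivity of $T$. Fix $A\in\mathcal{S}$. By monotonicity, every $F\in\mathcal{E}_{int}$ with $F\subseteq A$ satisfies $T(F)\le T(A)$, and every $G\in\mathcal{E}_{ext}$ with $A\subseteq G$ satisfies $T(A)\le T(G)$; moreover Theorem \ref{theo-dc1} guarantees that both index sets are non-empty. Hence
\[ \sup\{T(F)\,|\,F\in\mathcal{E}_{int},\,F\subseteq A\}\le T(A)\le \inf\{T(G)\,|\,G\in\mathcal{E}_{ext},\,A\subseteq G\},\]
with both extrema well defined. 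It remains to establish the reverse inequalities.

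For this, fix $\varepsilon>0$ and apply Theorem \ref{theo-dc1} to produce $F\in\mathcal{E}_{int}$ and $G\in\mathcal{E}_{ext}$ with $F\subseteq A\subseteq G$ and $T(G\setminus F)\le\varepsilon$. Writing $G=F\cup(G\setminus F)$ and using subadditivity, we get $T(G)\le T(F)+T(G\setminus F)\le T(F)+\varepsilon$. Combined with $T(F)\le T(A)\le T(G)$ this forces $T(A)-\varepsilon\le T(F)$ and $T(G)\le T(A)+\varepsilon$, so that $T(A)-\varepsilon\le \sup\{T(F)\,|\,F\in\mathcal{E}_{int},\,F\subseteq A\}$ and $\inf\{T(G)\,|\,G\in\mathcal{E}_{ext},\,A\subseteq G\}\le T(A)+\varepsilon$. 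Letting $\varepsilon\downarrow 0$ yields the missing inequalities, and the corollary follows.

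Since the argument is purely book-keeping on top of Theorem \ref{theo-dc1}, there is no genuine obstacle; the only points requiring a moment's care are that $G\setminus F\in\mathcal{S}$, so that $T(G\setminus F)$ is defined (true because $\mathcal{E}_{int},\mathcal{E}_{ext}\subseteq\mathcal{S}$), and that the supremum and infimum are taken over non-empty families, which is exactly what Theorem \ref{theo-dc1} provides for every $A\in\mathcal{S}$.
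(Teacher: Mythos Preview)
Your proof is correct and follows essentially the same approach as the paper: both invoke Theorem \ref{theo-dc1} to obtain $F\subseteq A\subseteq G$ with $T(G\setminus F)\le\varepsilon$, then use subadditivity and monotonicity to squeeze $T(A)$ between $T(F)$ and $T(G)$. The only cosmetic difference is that the paper writes $T(A)-T(F)\le T(A\setminus F)\le T(G\setminus F)\le\varepsilon$ instead of your $T(G)\le T(F)+\varepsilon$, but the content is identical.
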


\begin{proof}[\normalfont\bfseries Proof]
 Since the function $T$ is monotone, it follows that $T(A)$ is an upper bound of $\{T(F)\,|\,F\in\mathcal{E}_{int},\,F\subseteq A\}$. Let $\varepsilon >0$ and let $F\in\mathcal{E}_{int}$, $G\in\mathcal{E}_{ext}$ with $F\subseteq A\subseteq G$ and $T(G\setminus F)\leq \varepsilon$. Then subadditivity and monotonicity of $T$ yield \[ T(A)-T(F)\leq T(A\setminus F)\leq T(G\setminus F)\leq \varepsilon \] and we conclude that $T(A)$ is the least upper bound of the mentioned set. The same argument shows that $T(A)=\inf\{T(G)\,|\,G\in\mathcal{E}_{ext},\,A\subseteq G\}$. \end{proof}

\begin{lemma}\label{prop-dc}
 Let $\mathcal{E}$ be a $\cap$-stable collection of subsets of $S$ generating $\mathcal{S}$ with $\emptyset\in\mathcal{E}$ and $\mathcal{E}\subseteq (\mathcal{E}_{int})_{\sigma}$. If $\pi$ is a constructive cr-set, then for all $A\in\mathcal{S}$ there exist $K\in (\mathcal{E}_{int})_{\sigma}$, $L\in(\mathcal{E}_{ext})_{\delta}$ with $K\subseteq A\subseteq L$ and $T_{\pi}(L\setminus K)=0$.
\end{lemma}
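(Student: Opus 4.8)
The plan is to reduce the claim to Theorem~\ref{theo-dc1} applied to the finite pieces of $\pi$. Since $\pi$ is constructive, write $\pi(\om)=\bigcup_{k\in\N}\pi_{k}(\om)$ for all $\om\in\Om$, where each $\pi_{k}$ is a finite cr-set. By Proposition~\ref{prop2-5} the hitting function $T_{\pi_{k}}$ is continuous from above, and as it is automatically continuous from below, $T_{\pi_{k}}$ is a continuous hitting function for every $k\in\N$. Thus Theorem~\ref{theo-dc1} is applicable to each $T_{\pi_{k}}$.

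Now fix $A\in\mathcal{S}$. For each $k,m\in\N$ Theorem~\ref{theo-dc1} (applied to $T_{\pi_{k}}$) yields sets $F_{k,m}\in\mathcal{E}_{int}$ and $G_{k,m}\in\mathcal{E}_{ext}$ with $F_{k,m}\subseteq A\subseteq G_{k,m}$ and $T_{\pi_{k}}(G_{k,m}\setminus F_{k,m})\le 2^{-m}$. Put
\[ K:=\bigcup_{k,m\in\N}F_{k,m}\qquad\text{and}\qquad L:=\bigcap_{k,m\in\N}G_{k,m}. \]
By construction $K\subseteq A\subseteq L$; moreover $K$ is a countable union of $\mathcal{E}_{int}$-sets and $L$ is a countable intersection of $\mathcal{E}_{ext}$-sets, so $K\in(\mathcal{E}_{int})_{\sigma}$ and $L\in(\mathcal{E}_{ext})_{\delta}$, and in particular $L\setminus K\in\mathcal{S}$.

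It remains to check $T_{\pi}(L\setminus K)=0$. For every $k,m\in\N$ we have $L\setminus K\subseteq G_{k,m}\setminus F_{k,m}$, so by monotonicity $T_{\pi_{k}}(L\setminus K)\le T_{\pi_{k}}(G_{k,m}\setminus F_{k,m})\le 2^{-m}$; since this holds for all $m$, we get $T_{\pi_{k}}(L\setminus K)=0$, i.e.\ $P(\pi_{k}\cap(L\setminus K)\neq\emptyset)=0$ for every $k\in\N$. Because $\pi=\bigcup_{k}\pi_{k}$ holds pointwise, $\{\pi\cap(L\setminus K)\neq\emptyset\}=\bigcup_{k\in\N}\{\pi_{k}\cap(L\setminus K)\neq\emptyset\}$ is a countable union of $P$-null events, hence null, so $T_{\pi}(L\setminus K)=P(\pi\cap(L\setminus K)\neq\emptyset)=0$, as required.

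The only point going beyond Theorem~\ref{theo-dc1} is that the $\varepsilon$-approximation there is available only for each finite part $\pi_{k}$ separately; the remedy is to take the intersection of the countably many outer sets and the union of the countably many inner sets, which is harmless because what is ultimately needed is the \emph{vanishing} of a hitting probability and a countable union of null events is null. I do not expect a real obstacle; the only care required is the trivial bookkeeping that a countable union of $\mathcal{E}_{int}$-sets stays in $(\mathcal{E}_{int})_{\sigma}$ and a countable intersection of $\mathcal{E}_{ext}$-sets stays in $(\mathcal{E}_{ext})_{\delta}$.
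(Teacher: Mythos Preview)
Your proof is correct and follows essentially the same approach as the paper: both arguments decompose $\pi$ into finite pieces $\pi_k$, invoke Proposition~\ref{prop2-5} and Theorem~\ref{theo-dc1} for each $T_{\pi_k}$, and then take the union of the inner approximations and the intersection of the outer approximations. The only cosmetic difference is that the paper first fixes $\varepsilon$, obtains $K_\varepsilon,L_\varepsilon$ with $T_\pi(L_\varepsilon\setminus K_\varepsilon)\le\varepsilon$ via the summable bound $\sum_k T_{\pi_k}(G_k\setminus F_k)\le\varepsilon$, and then lets $\varepsilon\to 0$; you instead work with a doubly-indexed family $(F_{k,m},G_{k,m})$ from the outset and argue directly that each $T_{\pi_k}(L\setminus K)=0$, which is a slightly more streamlined bookkeeping of the same idea.
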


\begin{proof}[\normalfont\bfseries Proof]
 Let $\pi=\bigcup_{k\in\N}\pi_{k}$ with finite cr-sets $\pi_{k}$ and let $A\in\mathcal{S}$. By Theorem \ref{theo-dc1}, for $\varepsilon>0$ and $k\in\N$ there exist $F_{k}\in\mathcal{E}_{int},G_{k}\in \mathcal{E}_{ext}$ with $F_{k}\subseteq A\subseteq G_{k}$ and $T_{\pi_{k}}(G_{k}\setminus F_{k})\leq \varepsilon/2^{k}$. Defining $L_{\varepsilon}:=\bigcap_{k\in\N}G_{k}$ and $K_{\varepsilon}:=\bigcup_{k\in\N}F_{k}$, we have $\{\pi\cap (L_{\varepsilon}\setminus K_{\varepsilon})\neq\emptyset\}=\bigcup_{k\in\N}\{\pi_{k}\cap (L_{\varepsilon}\setminus K_{\varepsilon})\neq\emptyset\}$ and obtain by $\sigma$-subadditivity of $P$  \[ T_{\pi}(L_{\varepsilon}\setminus K_{\varepsilon})\leq \sum_{k\in\N}T_{\pi_{k}}(L_{\varepsilon}\setminus K_{\varepsilon}).\]
 From this we deduce by monotonicity of $T_{\pi_{k}}$ and by $L_{\varepsilon}\setminus K_{\varepsilon}\subseteq G_{k}\setminus F_{k}$ that
  \[ T_{\pi}(L_{\varepsilon}\setminus K_{\varepsilon})\leq \sum_{k\in\N}T_{\pi_{k}}(G_{k}\setminus F_{k})\leq \varepsilon.\]
 
Finally, observe that $L:=\bigcap_{n\in\N}L_{1/n}$ is an element of $(\mathcal{E}_{ext})_{\delta}$ and $K:=\bigcup_{n\in\N}K_{1/n}$ is an element of $(\mathcal{E}_{int})_{\sigma}$, satisfying $T_{\pi}(L\setminus K)\leq T_{\pi}(L_{1/n}\setminus K_{1/n})\leq 1/n$ for all $n\in\N$ as well as $K\subseteq A \subseteq L$. This proves the claim. \end{proof}

\begin{theorem}\label{theo-dc2}
  Let $\mathcal{E}$ be a $\cap$-stable collection of subsets of $S$ generating $\mathcal{S}$ with $\emptyset\in\mathcal{E}$ and $\mathcal{E}\subseteq (\mathcal{E}_{int})_{\sigma}$. If $\pi$ is a constructive cr-set, then its hitting function satisfies \[ T_{\pi}(A)=\sup\{ T_{\pi}(F)\,|\,F\in\mathcal{E}_{int},\,F\subseteq A\}\] for all $A\in\mathcal{S}$.
\end{theorem}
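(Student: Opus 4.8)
The plan is to combine the inner/outer approximation from Lemma~\ref{prop-dc} with the inner-regularity formula of Corollary~\ref{coro-dc1}, applied not to $T_\pi$ itself (which need not be continuous from above) but to the hitting functions $T_{\pi_k}$ of the finite pieces, which are continuous by Proposition~\ref{prop2-5}. Fix $A\in\mathcal{S}$ and $\varepsilon>0$. By Lemma~\ref{prop-dc} there exist $K\in(\mathcal{E}_{int})_\sigma$ and $L\in(\mathcal{E}_{ext})_\delta$ with $K\subseteq A\subseteq L$ and $T_\pi(L\setminus K)=0$; in particular $T_\pi(A\setminus K)=0$, so $T_\pi(A)=T_\pi(K)$ by monotonicity and subadditivity (writing $A\subseteq K\cup(A\setminus K)$). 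Thus it suffices to approximate $T_\pi(K)$ from below by $T_\pi(F)$ with $F\in\mathcal{E}_{int}$, $F\subseteq K\subseteq A$.

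Next I would unwind what $K\in(\mathcal{E}_{int})_\sigma$ means: $K=\bigcup_{m\in\N}F'_m$ with $F'_m\in\mathcal{E}_{int}$. Since $\mathcal{E}_{int}$ is $\cup$-stable (it is closed under finite unions, as noted in the list of basic properties before Theorem~\ref{theo-dc1}, because $\mathcal{E}$ is $\cap$-stable), the finite unions $G_m:=F'_1\cup\cdots\cup F'_m\in\mathcal{E}_{int}$ increase to $K$. So $\{\pi\cap G_m\neq\emptyset\}$ increases to $\{\pi\cap K\neq\emptyset\}$, and continuity of $P$ from below gives $T_\pi(G_m)\to T_\pi(K)$. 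Hence for $m$ large enough $T_\pi(G_m)\ge T_\pi(K)-\varepsilon=T_\pi(A)-\varepsilon$, and $G_m\in\mathcal{E}_{int}$ with $G_m\subseteq K\subseteq A$. Combined with monotonicity of $T_\pi$ (which makes $T_\pi(A)$ an upper bound of the set $\{T_\pi(F)\mid F\in\mathcal{E}_{int},\,F\subseteq A\}$), this yields the claimed supremum formula.

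The main subtlety — and the reason the proof is not just a one-line appeal to Corollary~\ref{coro-dc1} — is that $T_\pi$ itself is generally not continuous from above, so Theorem~\ref{theo-dc1} cannot be invoked directly for $\pi$; the whole point of Lemma~\ref{prop-dc} was to push the continuity-from-above argument down to the finite constituents $\pi_k$ and then reassemble via $\sigma$-subadditivity. So the real work has already been done in Lemma~\ref{prop-dc}, and this theorem is essentially a corollary: extract $K$ from that lemma, observe $T_\pi(A)=T_\pi(K)$, and then use only continuity of $P$ from below (which every hitting function enjoys) to approximate $K$ by an $\mathcal{E}_{int}$-set. I expect the only point requiring a moment's care is justifying that $\mathcal{E}_{int}$ is closed under the relevant finite unions so that $G_m\in\mathcal{E}_{int}$, and that the empty set lies in $\mathcal{E}_{int}$ so the supremum is over a nonempty set (both of which follow from $\emptyset\in\mathcal{E}$ and $\cap$-stability, exactly as used in the proof of Theorem~\ref{theo-dc1}).
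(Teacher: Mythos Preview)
Your proof is correct and follows essentially the same route as the paper: apply Lemma~\ref{prop-dc} to obtain $K\in(\mathcal{E}_{int})_\sigma$ with $K\subseteq A$ and $T_\pi(A\setminus K)=0$, deduce $T_\pi(A)=T_\pi(K)$ by monotonicity and subadditivity, write $K$ as an increasing union of $\mathcal{E}_{int}$-sets using finite-union stability, and conclude via continuity from below. Your opening sentence about invoking Corollary~\ref{coro-dc1} on the $T_{\pi_k}$ is a red herring (you never actually do this, and correctly note later that this work is already absorbed into Lemma~\ref{prop-dc}); the paper's proof likewise makes no direct use of Corollary~\ref{coro-dc1} here.
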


\begin{proof}[\normalfont\bfseries Proof]
By monotonicity $T_{\pi}(A)$ is an upper bound of $\{ T_{\pi}(F)\,|\,F\in\mathcal{E}_{int},\,F\subseteq A\}$ for all $A\in\mathcal{S}$.
Applying Lemma \ref{prop-dc} on $A\in\mathcal{S}$, there exists $K\in (\mathcal{E}_{int})_{\sigma}$ with $K\subseteq A$ and $T_{\pi}(A\setminus K)=0$.  Subadditivity and monotonicity of $T_{\pi}$ therefore yield \[ T_{\pi}(K) \leq T_{\pi}(A)\leq T_{\pi}(A\setminus K) + T_{\pi}(K) = T_{\pi}(K)\]
and we conclude $T_{\pi}(A)=T_{\pi}(K)$. Since $K\in (\mathcal{E}_{int})_{\sigma}$ and since $\mathcal{E}_{int}$ is closed under formation of finite unions, there exist $F_{n}\in\mathcal{E}_{int}$ with $K=\bigcup_{n\in\N}F_{n}$ and $F_{n}\subseteq F_{n+1}$ for all $n\in\N$. So by continuity from below of $T_{\pi}$, it follows that $T_{\pi}(A)$ is the least upper bound of $\{ T_{\pi}(F)\,|\,F\in\mathcal{E}_{int},\,F\subseteq A\}$. \end{proof}

\begin{corollary} \label{coro-dc2}
 Let $\mathcal{E}$ be a $\cap$-stable collection of subsets of $S$ generating $\mathcal{S}$ with $\emptyset\in\mathcal{E}$ and $\mathcal{E}\subseteq (\mathcal{E}_{int})_{\sigma}$. Let $\pi_{1}$ and $\pi_{2}$ be cr-sets satisfying $T_{\pi_{1}}(F)=T_{\pi_{2}}(F)$ for all $F\in\mathcal{E}_{int}$. If $\pi_{1}$ is constructive, then $T_{\pi_{1}}(A)\leq T_{\pi_{2}}(A)$ for all $A\in\mathcal{S}$.
 \end{corollary}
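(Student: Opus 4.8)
The plan is to combine Theorem \ref{theo-dc2} with the monotonicity of $T_{\pi_2}$. First I would fix $A\in\mathcal{S}$ and, for every $F\in\mathcal{E}_{int}$ with $F\subseteq A$, use the hypothesis $T_{\pi_1}(F)=T_{\pi_2}(F)$ together with the monotonicity of $T_{\pi_2}$ to get $T_{\pi_1}(F)=T_{\pi_2}(F)\leq T_{\pi_2}(A)$. Thus $T_{\pi_2}(A)$ is an upper bound for the set $\{T_{\pi_1}(F)\,|\,F\in\mathcal{E}_{int},\,F\subseteq A\}$.

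Next I would invoke the constructiveness of $\pi_1$: by Theorem \ref{theo-dc2}, the hitting function $T_{\pi_1}$ is inner regular with respect to $\mathcal{E}_{int}$, i.e.
\[ T_{\pi_1}(A)=\sup\{T_{\pi_1}(F)\,|\,F\in\mathcal{E}_{int},\,F\subseteq A\}. \]
Since $T_{\pi_2}(A)$ is an upper bound of the set over which the supremum is taken, we conclude $T_{\pi_1}(A)\leq T_{\pi_2}(A)$. As $A\in\mathcal{S}$ was arbitrary, this proves the corollary.

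There is essentially no obstacle here: the whole argument is a one-line deduction from Theorem \ref{theo-dc2}, once one notes that the hypotheses of that theorem (namely that $\mathcal{E}$ is $\cap$-stable, generates $\mathcal{S}$, contains $\emptyset$, and satisfies $\mathcal{E}\subseteq(\mathcal{E}_{int})_\sigma$, and that $\pi_1$ is constructive) are precisely the standing hypotheses of the corollary. The only point worth remarking is the asymmetry of the conclusion: constructiveness is assumed for $\pi_1$ only, so inner regularity is available only for $T_{\pi_1}$, which is why one obtains an inequality rather than equality; to upgrade to $T_{\pi_1}=T_{\pi_2}$ one would need the symmetric hypothesis on $\pi_2$ as well.
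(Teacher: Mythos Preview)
Your proof is correct and follows essentially the same approach as the paper's: both apply Theorem \ref{theo-dc2} to obtain $T_{\pi_1}(A)=\sup\{T_{\pi_1}(F)\,|\,F\in\mathcal{E}_{int},\,F\subseteq A\}$, use the hypothesis $T_{\pi_1}=T_{\pi_2}$ on $\mathcal{E}_{int}$, and then bound the supremum by $T_{\pi_2}(A)$ via monotonicity. The only cosmetic difference is the order of presentation.
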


\begin{proof}[\normalfont\bfseries Proof]
 By coincidence of $T_{\pi_{1}}$ and $T_{\pi_{2}}$ on $\mathcal{E}_{int}$ and by Theorem \ref{theo-dc2}, it follows that
\begin{eqnarray*} T_{\pi_{1}}(A) & = & \sup\{T_{\pi_{1}}(F)\,|\,F\in\mathcal{E}_{int},\,F\subseteq A\} \\ & = &  \sup\{T_{\pi_{2}}(F)\,|\,F\in\mathcal{E}_{int},\,F\subseteq A\}\end{eqnarray*} for all $A\in\mathcal{S}$. Since $T_{\pi_{2}}$ is monotone, $T_{\pi_{2}}(A)$ is an upper bound of $\{T_{\pi_{2}}(F)\,|\,F\in\mathcal{E}_{int},\,F\subseteq A\}$. Therefore we conclude $T_{\pi_{1}}(A)\leq T_{\pi_{2}}(A)$.\end{proof}

\begin{proof}[\normalfont\bfseries Proof of Theorem \ref{theo-unique-ring}] Since $\pi_{1}$ is $\sigma$-finite on the ring $\mathcal{R}$, there exist $R_{n}\in\mathcal{R}$, $n\in\N$, covering $S$ such that $R_{n}\subseteq R_{n+1}$ and $\pi_{1}\cap R_{n}$ is finite for all $n\in\N$. Note that $T_{\pi\cap R_{n}}(A)=T_{\pi}(A\cap R_{n})$ for any cr-set $\pi$ and for any $A\in\mathcal{S}$.\\ We are going to show that $T_{\pi_{1}\cap R_{n}}=T_{\pi_{2}\cap R_{n}}$ for all $n\in\N$. Since $T_{\pi_{1}\cap R_{n}}(A)=T_{\pi_{2}\cap R_{n}}(A)$ for all $A\in \mathcal{R}$ and by continuity from below, the functions $T_{\pi_{1}\cap R_{n}}$ and $T_{\pi_{2}\cap R_{n}}$ coincide on $\mathcal{R}_{ext}$. By Proposition \ref{prop2-5}, the function $T_{\pi_{1}\cap R_{n}}$ is continuous and we can apply Corollary \ref{coro-dc1} with $\mathcal{E}=\mathcal{R}$ resulting in \begin{eqnarray}  T_{\pi_{1}\cap R_{n}}(A) & = & \inf\{ T_{\pi_{1}\cap R_{n}}(G)\,|\,G\in\mathcal{R}_{ext}, A\subseteq G\}\nonumber\\ & = & \inf\{ T_{\pi_{2}\cap R_{n}}(G)\,|\,G\in\mathcal{R}_{ext}, A\subseteq G\}\label{eq4} \end{eqnarray} for all $A\in\mathcal{S}$. By monotonicity $T_{\pi_{2}\cap R_{n}}(A)$ is a lower bound of the set $\{ T_{\pi_{2}\cap R_{n}}(G)\,|\,G\in\mathcal{R}_{ext}, A\subseteq G\}$ and therefore (\ref{eq4}) yields \begin{equation}\label{eq5} T_{\pi_{2}\cap R_{n}}(A)\leq T_{\pi_{1}\cap R_{n}}(A)\end{equation} for all $A\in\mathcal{S}$. Now we are able to conclude that $T_{\pi_{2}\cap R_{n}}$ is continuous as well. Let $A_{m}\in\mathcal{S}$ with $A_{m+1}\subseteq A_{m}$ for all $m\in\N$ and $A:=\bigcap_{m\in\N}A_{m}$. Then the sets $A_{m}\setminus A$ are decreasing and their intersection is empty. Hence by subadditivity we obtain \[ T_{\pi_{2}\cap R_{n}}(A_{m})-T_{\pi_{2}\cap R_{n}}(A)\leq T_{\pi_{2}\cap R_{n}}(A_{m}\setminus A)\stackrel{\mbox{\footnotesize (\ref{eq5})}}{\leq} T_{\pi_{1}\cap R_{n}}(A_{m}\setminus A).\]
Therefore $T_{\pi_{1}\cap R_{n}}(A_{m}\setminus A)\to 0$ as $m\to\infty$ implies $T_{\pi_{2}\cap R_{n}}(A_{m})\to T_{\pi_{2}\cap R_{n}}(A)$ and $T_{\pi_{2}\cap R_{n}}$ is continuous. Since both hitting functions are continuous and coincide on $\mathcal{R}$, Proposition \ref{prop2-4} yields $T_{\pi_{1}\cap R_{n}}=T_{\pi_{2}\cap R_{n}}$.\\
Finally, by continuity from below, we conclude that \[ T_{\pi_{1}}(A)=\lim_{n\to\infty}T_{\pi_{1}}(A\cap R_{n})=\lim_{n\to\infty}T_{\pi_{2}}(A\cap R_{n})=T_{\pi_{2}}(A) \] for all $A\in\mathcal{S}$. Proposition \ref{prop-central} then proves the claim of Theorem \ref{theo-unique-ring}. \end{proof}

\begin{proof}[\normalfont\bfseries Proof of Theorem \ref{theo-unique-closed}]
By Proposition \ref{prop2-1}, we have $\Delta\in\mathcal{S}\otimes\mathcal{S}$. Therefore Proposition \ref{prop-central} yields that equality in law follows from $T_{\pi_{1}}=T_{\pi_{2}}$. Furthermore, choosing $\mathcal{E}$ as the collection of open sets in $S$, we see that $\mathcal{E}_{int}$ is the family of closed sets. $\mathcal{E}$ is $\cap$-stable, satisfies $\emptyset\in\mathcal{E}$ and generates the $\sigma$-field $\mathcal{S}$. Since in metric spaces all open sets are countable unions of closed sets, we get $\mathcal{E}\subseteq (\mathcal{E}_{int})_{\sigma}$ as well.\\
a) By Theorem \ref{theo-dc2}, condition (\ref{eq-neu1}) of Theorem \ref{theo-unique-closed} implies $T_{\pi_{1}}=T_{\pi_{2}}$. As mentioned above, this is sufficient.\\
b) Again we are going to show $T_{\pi_{1}}(A)=T_{\pi_{2}}(A)$ for all $A\in\mathcal{S}$. So let us fix $A\in\mathcal{S}$.
By Lemma \ref{prop-dc}, there exists $K\in (\mathcal{E}_{int})_{\sigma}$ with $K\subseteq A$ and $T_{\pi_{2}}(A\setminus K)=0$. By (\ref{eq-tuc2}) we get $T_{\pi_{1}}(A\setminus K)=0$ as well. From this point on, we follow the arguments in the proof of Theorem \ref{theo-dc2}. Those yield $T_{\pi_{1}}(A)=T_{\pi_{1}}(K)$, $T_{\pi_{2}}(A)=T_{\pi_{2}}(K)$ and the existence of $F_{n}\in\mathcal{E}_{int}$ with $K=\bigcup_{n\in\N}F_{n}$ and $F_{n}\subseteq F_{n+1}$ for all $n\in\N$. Therefore (\ref{eq-neu1}) and continuity from below imply $T_{\pi_{1}}(K)=T_{\pi_{2}}(K)$ resulting in $T_{\pi_{1}}(A)=T_{\pi_{2}}(A)$.\\
c) In metric spaces all closed sets are $G_{\delta}$-sets. Therefore the assumption in c) implies (\ref{eq-neu1}). Now applying b), it is sufficient to show (\ref{eq-tuc2}). Note that $(\mathcal{E}_{ext})_{\delta}$ is the collection of $G_{\delta}$-sets.
Fix $A\in\mathcal{S}$ with $T_{\pi_{2}}(A)=0$. Then, by Lemma \ref{prop-dc}, there exists a $G_{\delta}$-set $L$ with $A\subseteq L$ and $T_{\pi_{2}}(L\setminus A)=0$. Subadditivity yields \[ 0\leq T_{\pi_{2}}(L)\leq T_{\pi_{2}}(L\setminus A) + T_{\pi_{2}}(A) = 0\] and we conclude $T_{\pi_{1}}(L)=0$, since $T_{\pi_{1}}$ and $T_{\pi_{2}}$ coincide on $G_{\delta}$-sets. Hence $A\subseteq L$ implies $T_{\pi_{1}}(A)=0$, and (\ref{eq-tuc2}) has been shown. \end{proof}

%%%%%%%%%%%%%%%%%%%%%%%%%%%%%%%%%%%%%%%%%%%%%%%%%%%%%%%%%%%%%%%%%%%%%%%%%%%%%%%%%%%%%%%%%%%%%%%%
%%%%%%%%%%%%%%%%%%%%%%%%%%%%%%%%%%%%%%%%%%%%%%%%%%%%%%%%%%%%%%%%%%%%%%%%%%%%%%%%%%%%%%%%%%%%%%%%
%%%%%%%%%%%%%%%%%%%%%%%%%%%%%%%%%%%%%%%%%%%%%%%%%%%%%%%%%%%%%%%%%%%%%%%%%%%%%%%%%%%%%%%%%%%%%%%%
%%%%%%%%%%%%%%%%%%%%%%%%%%%%%%%%%%%%%%%%%%%%%%%%%%%%%%%%%%%%%%%%%%%%%%%%%%%%%%%%%%%%%%%%%%%%%%%%

\subsection{Constructiveness} \label{const}
\setcounter{theorem}{0}

Going over the proof of Kingman's existence theorem for Poisson processes in \cite{kingman}, we can see that all Poisson processes constructed there are countable unions of finite cr-sets. This observation was one reason to introduce Definition \ref{defi-constr}, which we will study in the following. We put this observation down in a remark:

\begin{remark} \label{theo-existence}
Let $\Delta\in\mathcal{S}\otimes\mathcal{S}$. For all $n\in\N$ let $\mu_{n}$ be a finite measure on $(S,\mathcal{S})$ with $\mu_{n}(\{x\})=0$ for all $x\in S$. Then there exists a constructive Poisson process $\pi:\Om\to C(S)$ with intensity $\mu:=\sum_{n\in\N}\mu_{n}$.
\end{remark}

Furthermore, Kingman proved in \cite[Section 2.2]{kingman} that the Cartesian product of two finite cr-sets $\pi_{1},\pi_{2}: (\Om,\mathcal{F})\to (C(S),\mathcal{C}(\mathcal{S}))$ is a finite cr-set $(\Om,\mathcal{F})\to (C(S\times S),\mathcal{C}(\mathcal{S}\otimes\mathcal{S}))$. In the case of $\Delta\in\mathcal{S}\otimes\mathcal{S}$, he concluded that $\pi_{1}\cap \pi_{2}$ is a finite cr-set and it follows easily that $\pi_{1}\setminus \pi_{2}$ and $\pi_{1}\cup\pi_{2}$ are finite cr-sets as well. We start this section by deducing some useful properties of constructive maps from this.

\begin{prop} \label{prop-constructive2}
If $\Delta\in\mathcal{S}\otimes\mathcal{S}$, then every constructive map is the disjoint union of countably many finite cr-sets. Therefore, under measurability of the diagonal, constructive maps are countable random sets.
\end{prop}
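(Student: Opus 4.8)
The plan is to start from the defining representation $\tau = \bigcup_{k\in\N}\pi_k$ with finite cr-sets $\pi_k$ and disjointify it in the obvious greedy fashion, using the operations Kingman provides under measurability of the diagonal. First I would set $\rho_1 := \pi_1$ and inductively define $\rho_{k+1} := \pi_{k+1}\setminus(\pi_1\cup\cdots\cup\pi_k)$. Since $\Delta\in\mathcal{S}\otimes\mathcal{S}$, the discussion preceding the proposition tells us that finite unions and set-differences of finite cr-sets are again finite cr-sets, so each $\rho_k$ is a finite cr-set. By construction the $\rho_k$ are pairwise disjoint pointwise (for $j<k$ we have $\rho_k(\om)\cap\pi_j(\om)=\emptyset$, hence $\rho_k(\om)\cap\rho_j(\om)\subseteq\rho_k(\om)\cap\pi_j(\om)=\emptyset$), and a routine induction shows $\rho_1(\om)\cup\cdots\cup\rho_k(\om)=\pi_1(\om)\cup\cdots\cup\pi_k(\om)$ for every $k$ and every $\om$. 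Taking the union over all $k$ gives $\bigcup_k\rho_k(\om)=\bigcup_k\pi_k(\om)=\tau(\om)$, so $\tau$ is the disjoint union of the finite cr-sets $\rho_k$.

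For the final sentence I would observe that this already shows $\tau$ is a countable union of finite cr-sets that are, in particular, $\mathcal{F}$-$\mathcal{C}(\mathcal{S})$-measurable maps $\Om\to C(S)$; it remains to note that such a countable union is itself measurable, i.e. a cr-set. This follows from the definition of $\mathcal{C}(\mathcal{S})$ as the smallest $\sigma$-field making all $N_A$, $A\in\mathcal{S}$, measurable: for each $A\in\mathcal{S}$ we have $N_A(\tau(\om)) = N_A\bigl(\bigcup_k\rho_k(\om)\bigr) = \sum_{k\in\N}N_A(\rho_k(\om))$, the last equality because the $\rho_k(\om)$ are pairwise disjoint. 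Each $\om\mapsto N_A(\rho_k(\om))$ is $\mathcal{F}$-measurable since $\rho_k$ is a cr-set, so the countable sum $\om\mapsto N_A(\tau(\om))$ is $\mathcal{F}$-measurable as well; as this holds for every $A\in\mathcal{S}$, the map $\tau$ is $\mathcal{F}$-$\mathcal{C}(\mathcal{S})$-measurable, hence a cr-set.

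I do not expect any serious obstacle here: the only point requiring care is the bookkeeping in the induction $\rho_1\cup\cdots\cup\rho_k=\pi_1\cup\cdots\cup\pi_k$ and the attendant verification of pairwise disjointness, and the only ingredient beyond set theory is the closure of finite cr-sets under $\cup$ and $\setminus$ when $\Delta\in\mathcal{S}\otimes\mathcal{S}$, which the excerpt has already recalled from \cite{kingman}. The reduction of measurability of the union to the additivity $N_A\bigl(\bigcup_k\rho_k\bigr)=\sum_k N_A(\rho_k)$ on disjoint countable sets is equally elementary. So the proposition is essentially a clean packaging of facts assembled in the paragraph preceding it.
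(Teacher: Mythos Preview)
Your proposal is correct and matches the paper's own proof essentially line for line: the paper also disjointifies via $\pi_1,\ \pi_2\setminus\pi_1,\ \pi_3\setminus(\pi_1\cup\pi_2),\ldots$, invokes closure of finite cr-sets under $\cup$ and $\setminus$ when $\Delta\in\mathcal{S}\otimes\mathcal{S}$, and then uses $N_A(\tau)=\sum_n N_A(\rho_n)$ to conclude measurability. Your write-up simply spells out the bookkeeping (pairwise disjointness and the inductive equality $\rho_1\cup\cdots\cup\rho_k=\pi_1\cup\cdots\cup\pi_k$) a bit more explicitly than the paper does.
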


\begin{proof}[\normalfont\bfseries Proof] Let $\tau: \Om\to C(S)$ be a constructive map and let $\pi_{n}$ be finite cr-sets with $\tau(\om)=\bigcup_{n\in\N}\pi_{n}(\om)$ for all $\om\in\Om$.\\
It follows that $\tau=\pi_{1} \cup (\pi_{2}\setminus \pi_{1}) \cup (\pi_{3}\setminus (\pi_{1}\cup \pi_{2}))\cup \cdots$ is the disjoint union of the sets $\pi_{1}$ and $\pi_{n}\setminus (\pi_{1}\cup\cdots\cup \pi_{n-1})$, which are finite cr-sets, as we mentioned above. Therefore 
\[ N_{A}(\tau)=N_{A}(\pi_{1})+\sum_{n=2}^{\infty}N_{A}(\pi_{n}\setminus (\pi_{1}\cup\cdots\cup \pi_{n-1})) \]
and the maps $N_{A}(\tau)$ are $\mathcal{F}$-$\mathcal{B}([0,\infty ])$ measurable for all $A\in\mathcal{S}$. By definition of $\mathcal{C}(\mathcal{S})$, this yields the $\mathcal{F}$-$\mathcal{C}(\mathcal{S})$ measurability of $\tau$.\end{proof}

By similar arguments, we can conclude that constructive maps are closed under formation of various set operations.
We will not give a proof.

\begin{prop} \label{prop-constructive3}
 Suppose $\tau_{n}:\Om\to C(S)$, $n\in\N$, are constructive maps. Then $\tau_{1}\times \tau_{2}$ and $\bigcup_{n\in\N}\tau_{n}$ are constructive. If $\Delta\in\mathcal{S}\otimes\mathcal{S}$, then $\tau_{1}\setminus \tau_{2}$ and $\bigcap_{n\in\N}\tau_{n}$ are constructive as well.
\end{prop}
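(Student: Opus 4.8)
The plan is to unwind Definition~\ref{defi-constr} and reduce everything to the facts about finite cr-sets recalled above (products, finite unions, and — under $\Delta\in\mathcal{S}\otimes\mathcal{S}$ — finite intersections and differences of finite cr-sets are again finite cr-sets). So fix, for each $n\in\N$, finite cr-sets $\pi_{n,k}$, $k\in\N$, with $\tau_{n}=\bigcup_{k\in\N}\pi_{n,k}$, and fix once and for all a bijection $\N\to\N\times\N$ to reindex doubly-indexed countable families.

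For $\bigcup_{n\in\N}\tau_{n}$ I would simply observe that $\bigcup_{n\in\N}\tau_{n}=\bigcup_{(n,k)}\pi_{n,k}$ is, after reindexing, a countable union of finite cr-sets, hence constructive; no hypothesis on $\mathcal{S}$ is needed. For $\tau_{1}\times\tau_{2}$ one writes $\tau_{1}\times\tau_{2}=\bigcup_{(j,k)\in\N\times\N}(\pi_{1,j}\times\pi_{2,k})$; by Kingman each $\pi_{1,j}\times\pi_{2,k}$ is a finite cr-set on $(S\times S,\mathcal{S}\otimes\mathcal{S})$, so the reindexed countable union is constructive on the product space, again without using $\Delta\in\mathcal{S}\otimes\mathcal{S}$.

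For the difference, assume $\Delta\in\mathcal{S}\otimes\mathcal{S}$. The reduction is $\tau_{1}\setminus\tau_{2}=\bigcup_{j\in\N}(\pi_{1,j}\setminus\tau_{2})$, so by the union case it suffices to show each $\pi_{1,j}\setminus\tau_{2}$ is a finite cr-set. Set $\sigma_{j,k}:=\pi_{1,j}\setminus(\pi_{2,1}\cup\cdots\cup\pi_{2,k})$; each $\sigma_{j,k}$ is a finite cr-set (finite unions and differences of finite cr-sets are finite cr-sets under $\Delta\in\mathcal{S}\otimes\mathcal{S}$), the $\sigma_{j,k}$ decrease in $k$, and $\bigcap_{k\in\N}\sigma_{j,k}=\pi_{1,j}\setminus\tau_{2}$. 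Since $\pi_{1,j}(\om)$ is finite, the decreasing chain of subsets $\sigma_{j,k}(\om)$ is eventually constant, so for every $A\in\mathcal{S}$,
\[ N_{A}(\pi_{1,j}\setminus\tau_{2})=\inf_{k\in\N}N_{A}(\sigma_{j,k})=\lim_{k\to\infty}N_{A}(\sigma_{j,k}), \]
which is $\mathcal{F}$-measurable; by the definition of $\mathcal{C}(\mathcal{S})$ this makes $\pi_{1,j}\setminus\tau_{2}$ a cr-set, and it is finite because it is contained in $\pi_{1,j}$.

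Finally, for $\bigcap_{n\in\N}\tau_{n}$ (still under $\Delta\in\mathcal{S}\otimes\mathcal{S}$) I would not try to describe it directly but instead complement relative to $\tau_{1}$: a routine pointwise check gives
\[ \bigcap_{n\in\N}\tau_{n}=\tau_{1}\setminus\bigcup_{n\in\N}(\tau_{1}\setminus\tau_{n}). \]
Each $\tau_{1}\setminus\tau_{n}$ is constructive by the difference case, their countable union $U:=\bigcup_{n\in\N}(\tau_{1}\setminus\tau_{n})$ is constructive by the union case, and then $\tau_{1}\setminus U$ is constructive by the difference case once more. The main obstacle is precisely the difference case: because $\tau_{2}$ is genuinely infinite one cannot invoke Kingman's finite-difference fact directly, and the key point is to realise $\pi_{1,j}\setminus\tau_{2}$ as the pointwise-stabilising decreasing limit of the finite cr-sets $\sigma_{j,k}$, so that the counting functionals $N_{A}$ pass to the limit and measurability is preserved; everything else is bookkeeping with the pairing $\N\cong\N\times\N$ and one elementary set identity.
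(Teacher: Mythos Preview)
Your argument is correct. The paper explicitly omits the proof of this proposition, saying only that it follows ``by similar arguments'' from the facts about finite cr-sets recalled just before (products, and --- under $\Delta\in\mathcal{S}\otimes\mathcal{S}$ --- intersections, differences, and unions of finite cr-sets are again finite cr-sets); your write-up is precisely the kind of argument the paper has in mind, and the one nontrivial step you isolate --- realising $\pi_{1,j}\setminus\tau_{2}$ as the eventually-constant decreasing limit of the finite cr-sets $\sigma_{j,k}$ inside the finite set $\pi_{1,j}$ so that $N_{A}$ passes to the limit --- is exactly what is needed to bridge from Kingman's finite-case facts to the constructive case.
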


Proposition \ref{prop-constructive3} was another reason to introduce the notion of constructiveness. We believe that in general cr-sets fail to be closed under formation of those set operations -- even under the assumption of $\Delta\in\mathcal{S}\otimes\mathcal{S}$.
 
There do exist non-constructive cr-sets. Kendall gives in \cite{kendall} the following example, which is an adaptation of \cite[Example 5]{himmel2}: 

\begin{example}
Let $(S,\mathcal{S})$ be the real numbers $\R$ equipped with its Borel-$\sigma$-field $\mathcal{B}(\R)$ and let $\Om:=C(S)\setminus\{\emptyset\}$ and $\mathcal{F}:=\{D\cap \Om\,|\,D\in\mathcal{C}(\mathcal{S})\}$. The map $\pi :\Om\to C(S)$, $\om\mapsto \om$ is $\mathcal{F}$-$\mathcal{C}(\mathcal{S})$ measurable and not constructive. If $\pi$ was constructive, then Theorem \ref{theo-constructive1} would yield the existence of a measurable $X:\Om\to \R$ with $X(\om)\in\pi(\om)$ for all $\om\in\Om$. Since $\R^{\N}\to \Om$, $(x_{n})_{n\in\N}\mapsto \{x_{n}\,|\,n\in\N\}$ is $\mathcal{B}(\R^{\N})$-$\mathcal{F}$ measurable, we would conclude that $g:\R^{\N}\to \R$, $(x_{n})_{n\in\N}\mapsto X(\{x_{n}\,|\,n\in\N\})$ is $\mathcal{B}(\R^{\N})$-$\mathcal{B}(\R)$ measurable. But according to \cite[Corollary 2]{blackwell} such $g$ does not exist.
\end{example}

Next, we are going to prove Theorem \ref{theo-constructive1}, which will be prepared in the following. As in Section 2, we need nested sequences of partitions of $S$ -- a concept presented e.g. in \cite{daley}:

 Let $E_{1},E_{2},\ldots$ be a countable family of sets in $\mathcal{S}$ separating points of $S$. Define inductively
\begin{eqnarray*}
  & \bullet & Z_{1,1}:=E_{1}\;\:\mbox{and}\;\:Z_{1,2}:=E_{1}^{c}, \\
  & \bullet & Z_{n,2k-1}:= Z_{n-1,k}\cap E_{n}\:\;\mbox{and}\:\;Z_{n,2k}:=Z_{n-1,k}\cap E_{n}^{c}
\end{eqnarray*}
for $n\in\N$, $n\geq 2$ and $k\in\{1,\ldots,2^{n-1}\}$. Observe that, for every single $n\in\N$, the sets $Z_{n,1},\ldots,Z_{n,2^{n}}$ are a partition of $S$. Now let $\emptyset \neq M \subseteq S$ be a finite subset of $S$. For all $n\in\N$ define $k(n)=k(n,M)$ to be the least element $k\in\{1,\ldots,2^{n}\}$ such that $M\cap Z_{n,k}\neq \emptyset$.\\
By definition of $k(n)$ and $Z_{n,k}$, it follows that $Z_{n,k(n)}\supseteq Z_{n+1,k(n+1)}$ for all $n\in\N$. Since $M$ is finite, we conclude that there exists $x\in S$ such that $x\in M\cap Z_{n,k(n)}$ for all $n\in\N$.
Finally, because the sets $E_{1},E_{2},\ldots$ are separating points, we deduce that 
\begin{equation} \label{eq6} \{x\} = \bigcap_{n\in\N} M\cap Z_{n,k(n)}.\end{equation}

 For a fixed countable family of sets in $\mathcal{S}$ separating points of $S$ we introduce the following notations:
\begin{enumerate}
 \item[$\bullet$]  If $\emptyset\neq M\subseteq S$ is a finite set, define $X(M)$ to be the element $x\in S$ in (\ref{eq6}).
 \item[$\bullet$] If $\pi:\Om\to C(S)$ is finite and if $Y:\Om\to S$ is a map, define $X[\pi,Y]:\Om\to S$ by \[ X[\pi,Y](\om):=\left\{\begin{array}{ll} Y(\om) & \mbox{if}\;\;\om\in\{\pi=\emptyset\}, \\ X(\pi(\om)) & \mbox{if}\;\;\om\in\{\pi\neq\emptyset\}. \end{array}\right.\]
\end{enumerate}

\begin{lemma} \label{propA1} Fix a countable family of sets in $\mathcal{S}$ separating points of $S$. 
 Let $\pi$ be a finite cr-set and let $Y:(\Om,\mathcal{F})\to (S,\mathcal{S})$ be measurable. Then $X[\pi,Y]$ is $\mathcal{F}$-$\mathcal{S}$ measurable and satisfies
$X[\pi,Y]\in \pi$ on $\{\pi\neq \emptyset\}$ as well as $X[\pi,Y]=Y$ on $\{\pi=\emptyset\}$. 
\end{lemma}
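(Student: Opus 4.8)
The plan is to verify the three claimed properties of $X[\pi,Y]$ separately, the behaviour on $\{\pi=\emptyset\}$ and on $\{\pi\neq\emptyset\}$ being immediate from the definition, and the measurability being the only real work. On $\{\pi=\emptyset\}$ we have $X[\pi,Y]=Y$ by definition, and on $\{\pi\neq\emptyset\}$ we have $X[\pi,Y](\om)=X(\pi(\om))$, which by the construction preceding the lemma (equation~(\ref{eq6})) is an element of $\pi(\om)$; so the pointwise assertions hold trivially. It remains to show that $X[\pi,Y]:\Om\to S$ is $\mathcal{F}$-$\mathcal{S}$ measurable.

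For measurability the key observation is that the map $M\mapsto X(M)$ is built from the fixed nested partitions $Z_{n,k}$ in a way that is ``piecewise constant at each finite level'' and only passes to a limit in the sense of~(\ref{eq6}). Concretely, on $\{\pi\neq\emptyset\}$ the index $k(n)=k(n,\pi(\om))$ is, for each fixed $n$, a function of $\om$ taking values in $\{1,\dots,2^{n}\}$; and $\{\,\om:\pi(\om)\neq\emptyset,\ k(n,\pi(\om))=k\,\}=\{N_{Z_{n,1}}=0,\dots,N_{Z_{n,k-1}}=0,\ N_{Z_{n,k}}\neq 0\}$, which lies in $\mathcal{F}$ because each $Z_{n,k}\in\mathcal{S}$ and $\pi$ is a cr-set. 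Hence $\om\mapsto k(n,\pi(\om))$ is $\mathcal{F}$-measurable on $\{\pi\neq\emptyset\}$ for every $n$. Now define, for $\om\in\{\pi\neq\emptyset\}$, the ``level-$n$ approximations'' — but rather than pick representative points, it is cleaner to argue directly: for any $A\in\mathcal{S}$ I claim
\[
X[\pi,Y]^{-1}(A)=\big(\{\pi=\emptyset\}\cap Y^{-1}(A)\big)\ \cup\ \big(\{\pi\neq\emptyset\}\cap W_{A}\big),
\]
where $W_{A}$ is the set of $\om$ with $X(\pi(\om))\in A$. To see $W_{A}\in\mathcal{F}$, fix $\om\in\{\pi\neq\emptyset\}$ and note $X(\pi(\om))\in A$ iff eventually the point $x=X(\pi(\om))$ lands in $A$; since $\{x\}=\bigcap_n \pi(\om)\cap Z_{n,k(n)}$ and the $E_i$ separate points, for $A\in\mathcal{S}$ we can test membership through the chosen family. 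The smooth way to finish is: apply Proposition~\ref{prop2-1} and work with a separable metric $d$ on $S$ whose Borel sets lie in $\mathcal{S}$; then $X(\pi(\om))=\lim_{n\to\infty} x_{n}(\om)$ in $(S,d)$ for \emph{any} choice of points $x_{n}(\om)\in \pi(\om)\cap Z_{n,k(n)}$, because those sets decrease to the singleton $\{X(\pi(\om))\}$ and the $Z_{n,k}$'s separate points, forcing the diameters to shrink. Since each selector $\om\mapsto x_{n}(\om)$ can be taken $\mathcal{F}$-measurable (on the finitely many measurable pieces $\{k(n,\pi)=k\}$, use that a finite cr-set admits a measurable enumeration of its points, or simply take the first point of $\pi$ lying in $Z_{n,k}$ under a fixed measurable enumeration), the pointwise limit $X(\pi(\cdot))$ is $\mathcal{F}$-$\mathcal{S}$ measurable on $\{\pi\neq\emptyset\}$; gluing with the measurable map $Y$ on the complementary $\mathcal{F}$-set $\{\pi=\emptyset\}$ gives measurability of $X[\pi,Y]$ on all of $\Om$.

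The main obstacle is producing the measurable ``approximating selectors'' $\om\mapsto x_{n}(\om)\in\pi(\om)\cap Z_{n,k(n)}$ rigorously: this needs that a finite cr-set can be written as an $\mathcal{F}$-measurable finite list of $S$-valued random variables at the level of the individual points. One clean way around it — avoiding any appeal to a harder selection theorem — is to note that whether $X(\pi(\om))\in A$ depends on $\pi(\om)$ only through the countably many numbers $N_{Z_{n,k}}(\pi(\om))$ together with $N_{Z_{n,k}\cap A}(\pi(\om))$, all of which are $\mathcal{F}$-measurable by definition of a cr-set; phrasing $W_{A}$ as a countable Boolean combination of the events $\{N_{Z_{n,k}}\neq 0\}$ and $\{N_{Z_{n,k}\cap A}\neq 0\}$ then gives $W_{A}\in\mathcal{F}$ directly. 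I would carry out the proof in this second way: (1) record the two pointwise identities from the definition; (2) fix $A\in\mathcal{S}$ and, using~(\ref{eq6}) and the separation property, express $\{\,\om\in\{\pi\neq\emptyset\}:X(\pi(\om))\in A\,\}$ as a countable set operation on events of the form $\{N_{Z_{n,k}}\bowtie 0\}$; (3) conclude $W_{A}\in\mathcal{F}$, hence $X[\pi,Y]^{-1}(A)\in\mathcal{F}$, for every $A\in\mathcal{S}$.
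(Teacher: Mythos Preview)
Your final plan (route (2)) is exactly the paper's proof. The paper simply writes down the explicit identity
\[
\{\pi\neq\emptyset,\ X(\pi)\in A\}=\bigcap_{n\in\N}\bigcup_{k=1}^{2^{n}}\Bigl(\{\pi\cap Z_{n,k}\cap A\neq\emptyset\}\cap\bigcap_{j=1}^{k-1}\{\pi\cap Z_{n,j}=\emptyset\}\Bigr),
\]
which is precisely the ``countable Boolean combination of the events $\{N_{Z_{n,k}}\neq 0\}$ and $\{N_{Z_{n,k}\cap A}\neq 0\}$'' you promise; each event on the right is in $\mathcal{F}$ since $Z_{n,k},\,Z_{n,k}\cap A\in\mathcal{S}$ and $\pi$ is a cr-set. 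The inclusion $\subseteq$ is immediate from $X(\pi(\om))\in\pi(\om)\cap Z_{n,k(n)}$; for $\supseteq$, the condition forces $k=k(n)$, so $\pi(\om)\cap Z_{n,k(n)}\cap A\neq\emptyset$ for all $n$, and since $\pi(\om)$ is finite and these nested sets shrink to $\{X(\pi(\om))\}$, they eventually \emph{equal} $\{X(\pi(\om))\}$, whence $X(\pi(\om))\in A$.

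Two remarks on the abandoned first route. First, you are right that it is circular in this paper's logical order: the measurable enumeration of a finite cr-set is Theorem~\ref{theoA1}, whose proof invokes the present lemma. Second, even granting selectors, the metric detour is unnecessary: because $\pi(\om)$ is finite, the nested sets $\pi(\om)\cap Z_{n,k(n)}$ are eventually equal to the singleton $\{X(\pi(\om))\}$, so any sequence $x_{n}(\om)$ chosen from them is eventually constant; convergence in some auxiliary metric is not the right mechanism here.
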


\begin{proof}[\normalfont\bfseries Proof]
 We have $\{X[\pi,Y]\in A\}=\{\pi=\emptyset, Y\in A\}\cup \{\pi\neq\emptyset, X(\pi)\in A\}$ for all $A\in\mathcal{S}$. Therefore it is sufficient to show that $\{\pi\neq\emptyset, X(\pi)\in A\}\in\mathcal{F}$, which follows from
\[ \{\pi\neq\emptyset, X(\pi)\in A\}= \bigcap_{n\in\N}\bigcup_{k=1}^{2^{n}}\left(\{\pi\cap Z_{n,k}\cap A\neq \emptyset\}\cap \bigcap_{j=1}^{k-1}\{ \pi\cap Z_{n,j}=\emptyset\}\right). \]
The rest is obvious. \end{proof}

\begin{theorem}\label{theoA1}
 Let $\Delta\in\mathcal{S}\otimes\mathcal{S}$ and let $\pi$ be a finite cr-set. Then there exist measurable
 $X_{n}:(\Om,\mathcal{F})\to (S,\mathcal{S})$, $n\in\N$, such that $\pi=1_{\{\pi\neq\emptyset\}}\{X_{1},X_{2},\ldots\}$.
\end{theorem}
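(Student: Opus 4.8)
The plan is to strip the points off $\pi$ one at a time, repeatedly invoking Lemma \ref{propA1}, and to feed the \emph{previous} selection back as the fall-back value so that, once $\pi$ has been exhausted, the extra selections merely repeat a point already known to lie in $\pi$ rather than introducing a spurious one. We may assume $S\neq\emptyset$; fix $s_{0}\in S$ and fix once and for all a countable family of $\mathcal{S}$-sets separating points of $S$, which exists by Proposition \ref{prop2-1} because $\Delta\in\mathcal{S}\otimes\mathcal{S}$, so that the operator $X[\,\cdot\,,\,\cdot\,]$ of Lemma \ref{propA1} is at our disposal. Now define recursively $\pi_{0}:=\pi$ and $Y_{1}:\equiv s_{0}$; having constructed a finite cr-set $\pi_{n-1}$ and, for $n\geq 2$, a random variable $X_{n-1}$, set $Y_{n}:=X_{n-1}$, put $X_{n}:=X[\pi_{n-1},Y_{n}]$ and $\pi_{n}:=\pi_{n-1}\setminus\{X_{n}\}$. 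By Lemma \ref{propA1} each $X_{n}$ is $\mathcal{F}$-$\mathcal{S}$ measurable, hence $\om\mapsto\{X_{n}(\om)\}$ is a finite cr-set; since $\Delta\in\mathcal{S}\otimes\mathcal{S}$, the remarks preceding Proposition \ref{prop-constructive2} show that the difference $\pi_{n}=\pi_{n-1}\setminus\{X_{n}\}$ is again a finite cr-set, so the recursion is legitimate. A trivial induction gives $\pi_{n}(\om)=\pi(\om)\setminus\{X_{1}(\om),\ldots,X_{n}(\om)\}$ for every $n$ and every $\om$.

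Next I would check the identity $\pi=1_{\{\pi\neq\emptyset\}}\{X_{1},X_{2},\ldots\}$ pointwise. On $\{\pi=\emptyset\}$ the right-hand side is $\emptyset$ by the definition of $1_{E}\,\pi$, so there is nothing to prove. Fix $\om$ with $m:=|\pi(\om)|\geq 1$. For $1\leq k\leq m$ we have $|\pi_{k-1}(\om)|=m-k+1\geq 1$, so Lemma \ref{propA1} yields $X_{k}(\om)\in\pi_{k-1}(\om)=\pi(\om)\setminus\{X_{1}(\om),\ldots,X_{k-1}(\om)\}$; therefore $X_{1}(\om),\ldots,X_{m}(\om)$ are $m$ pairwise distinct points of $\pi(\om)$, whence $\{X_{1}(\om),\ldots,X_{m}(\om)\}=\pi(\om)$. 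For $k>m$ we get $\pi_{k-1}(\om)=\pi(\om)\setminus\{X_{1}(\om),\ldots,X_{k-1}(\om)\}=\emptyset$, so Lemma \ref{propA1} gives $X_{k}(\om)=Y_{k}(\om)=X_{k-1}(\om)$, and an induction shows $X_{k}(\om)=X_{m}(\om)\in\pi(\om)$ for all $k\geq m$. Hence $\{X_{1}(\om),X_{2}(\om),\ldots\}=\pi(\om)$, which equals $1_{\{\pi\neq\emptyset\}}\{X_{1},X_{2},\ldots\}(\om)$ on $\{\pi\neq\emptyset\}$. This proves the theorem.

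The only genuinely delicate point is the one the fall-back construction is designed to handle: as soon as $\pi_{n-1}(\om)$ becomes empty, $X[\pi_{n-1},Y_{n}]$ returns $Y_{n}(\om)$, and a fixed value $s_{0}$ there could enlarge the union by a point $s_{0}\notin\pi(\om)$; choosing $Y_{n}:=X_{n-1}$ forces these overflow selections to repeat the point $X_{m}(\om)\in\pi(\om)$. Everything else is routine: measurability and the selection property come directly from Lemma \ref{propA1}, and the stability of the class of finite cr-sets under set difference (needed to keep the recursion inside that class) is exactly the closure property recalled before Proposition \ref{prop-constructive2}, which is where $\Delta\in\mathcal{S}\otimes\mathcal{S}$ enters.
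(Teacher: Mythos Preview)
Your proof is correct and follows essentially the same approach as the paper: both strip points off $\pi$ one at a time via repeated applications of Lemma \ref{propA1}, use the closure of finite cr-sets under set difference (which requires $\Delta\in\mathcal{S}\otimes\mathcal{S}$) to keep the recursion valid, and choose the fallback value so that overflow selections repeat a point already in $\pi(\om)$. The only cosmetic difference is that the paper uses $X_{1}$ as the fallback for every $n\geq 2$ (so overflow values all equal $X_{1}(\om)$), whereas you use $X_{n-1}$ (so overflow values stabilize at $X_{m}(\om)$); both choices work for the same reason.
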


\begin{proof}[\normalfont\bfseries Proof]
 Fix a countable family of sets in $\mathcal{S}$ separating points of $S$ and choose any measurable map $Y:(\Om,\mathcal{F})\to (S,\mathcal{S})$. Now define inductively for all $n\in\N$, $n\geq 2$:
\begin{eqnarray*}
  & \bullet & X_{1}:=X[\pi,Y]\;\:\mbox{and}\;\:\pi_{1}:=\pi\setminus\{X_{1}\}, \\
  & \bullet & X_{n}:=X[\pi_{n-1},X_{1}]\:\;\mbox{and}\:\;\pi_{n}:=\pi_{n-1}\setminus\{X_{n}\}.
\end{eqnarray*}
By Lemma \ref{propA1} and by induction, it follows that all $X_{n}$ are measurable and all $\pi_{n}$ are finite cr-sets. Remember that in the case of $\Delta\in\mathcal{S}\otimes\mathcal{S}$ finite cr-sets are closed under formation of set-theoretic differences. Furthermore, we have $X_{n}\in \pi_{n-1}$ on $\{\pi_{n-1}\neq \emptyset\}$ and $X_{n}=X_{1}$ on $\{\pi_{n-1}=\emptyset\}$ for all $n\geq 2$ as well as $X_{1}\in \pi$ on $\{\pi\neq \emptyset\}$.\\
Therefore, if $\om\in \{\pi\neq \emptyset\}$ with $|\pi(\om)|=k$ and $k\in\N$, we conclude that $\pi(\om)=\{X_{1}(\om),\ldots,X_{k}(\om)\}$ and $X_{n}(\om)=X_{1}(\om)$ for all $n>k$. \end{proof}

Choosing a metric on $S$ by Proposition \ref{prop2-1} and applying \cite[Theorem 5.4]{himmel1}, we can achieve a very similar result. However, the difference lies in the Borel-measurability of the selectors in
\cite[Theorem 5.4]{himmel1}, whereas the $X_{n}$ in Theorem \ref{theoA1} are $\mathcal{F}$-$\mathcal{S}$ measurable.

\begin{proof}[\normalfont\bfseries Proof of Theorem \ref{theo-constructive1}] Let $\pi=\bigcup_{n\in\N}\pi_{n}$ with finite cr-sets $\pi_{n}$. Then $\{\pi\neq \emptyset\}$ is the disjoint union of the sets $\{\pi_{1}=\pi_{2}=\cdots =\pi_{n-1}=\emptyset, \pi_{n}\neq\emptyset\}$, $n\in\N$. Let $Y:(\Om,\mathcal{F})\to (S,\mathcal{S})$ be any measurable map and define $X:\Om\to S$ by
\[ X(\om):=\left\{\begin{array}{ll} Y(\om) & \mbox{if}\;\;\om\in\{\pi=\emptyset\}, \\ X[\pi_{n},Y](\om) & \mbox{if}\;\;\om\in\{\pi_{1}=\cdots =\pi_{n-1}=\emptyset, \pi_{n}\neq\emptyset\},\,n\in\N. \end{array}\right.\]
 By Lemma \ref{propA1}, it follows that $X$ is $\mathcal{F}$-$\mathcal{S}$ measurable as well as $X(\om)\in\pi(\om)$ for all $\om\in\{\pi\neq \emptyset\}$. Applying Theorem \ref{theoA1}, for all $n\in\N$ there exist random variables $Y_{n,k}:(\Om,\mathcal{F})\to (S,\mathcal{S})$, $k\in\N$, with $\pi_{n}=1_{\{\pi_{n}\neq\emptyset\}}\{Y_{n,1},Y_{n,2},\ldots\}$. Now define
\[ X_{n,k}(\om):=\left\{\begin{array}{ll} X(\om) & \mbox{if}\;\;\om\in\{\pi_{n}=\emptyset\}, \\ Y_{n,k}(\om) & \mbox{if}\;\;\om\in\{\pi_{n}\neq \emptyset\} \end{array}\right.\] for all $n,k\in\N$. Then all $X_{n,k}$ are $\mathcal{F}$-$\mathcal{S}$ measurable and we have \[\pi=1_{\{\pi\neq\emptyset\}}\{X_{n,k}\,|\,(n,k)\in\N^{2}\}.\]
Since $\N^{2}$ is countable, the proof is complete. \end{proof}

Finally, we prove Theorem \ref{theo-decomp} and Theorem \ref{theo-inde-incre} in this section. For both proofs we need the following lemma and the following proposition.

\begin{lemma}\label{lemma-zorn}
Let $\mathfrak{X}\neq \emptyset$ be any set and let $\mathcal{P}$ be a non-empty collection of subsets of $\mathfrak{X}$, satisfying the following condition:
\begin{equation} \mbox{All subcollections} \,\;\mathcal{E}\subseteq\mathcal{P}\; \mbox{with disjoint elements are countable}.\label{eq-zorn}\end{equation} Then there exists a countable family of disjoint sets $A_{i}\in\mathcal{P}$, $i\in I$, such that $(\bigcup_{i\in I}A_{i})^{c}$ contains no set $A\in\mathcal{P}$ with $A\neq\emptyset$.
\end{lemma}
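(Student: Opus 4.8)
The plan is to use a maximality argument (Zorn's lemma) in the style of the classical proof that a family of pairwise disjoint sets of positive measure is countable. Condition (\ref{eq-zorn}) is exactly what makes this work: it says every antichain of disjoint $\mathcal{P}$-sets is countable, so any disjoint family we produce will automatically satisfy the countability requirement in the conclusion.

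First I would reduce to finding a \emph{maximal} disjoint subcollection of $\mathcal{P}\setminus\{\emptyset\}$. Consider the poset $\mathfrak{M}$ of all subcollections $\mathcal{E}\subseteq \mathcal{P}\setminus\{\emptyset\}$ whose elements are pairwise disjoint, ordered by inclusion. This poset is non-empty (the empty collection lies in it, or any single non-empty member of $\mathcal{P}$ if one exists; if $\mathcal{P}=\{\emptyset\}$ the statement is trivial with $I=\emptyset$). Given a chain in $\mathfrak{M}$, its union is again a subcollection with pairwise disjoint elements—any two members of the union lie in a common chain element and hence are disjoint—so it is an upper bound in $\mathfrak{M}$. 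By Zorn's lemma there is a maximal element $\mathcal{E}^{*}=\{A_{i}\mid i\in I\}\in\mathfrak{M}$.

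Then I would check the two assertions. By (\ref{eq-zorn}) applied to $\mathcal{E}^{*}$, the index set $I$ is countable, and by construction the $A_{i}$ are disjoint and lie in $\mathcal{P}$. For the remaining claim, suppose for contradiction that there were a non-empty $A\in\mathcal{P}$ with $A\subseteq (\bigcup_{i\in I}A_{i})^{c}$. Then $A$ is disjoint from every $A_{i}$, so $\mathcal{E}^{*}\cup\{A\}$ would again be a subcollection of $\mathcal{P}\setminus\{\emptyset\}$ with pairwise disjoint elements, strictly larger than $\mathcal{E}^{*}$, contradicting maximality. Hence $(\bigcup_{i\in I}A_{i})^{c}$ contains no non-empty $A\in\mathcal{P}$.

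I do not expect a serious obstacle here; the only things to be careful about are the trivial edge case $\mathcal{P}=\{\emptyset\}$ (handled by taking $I=\emptyset$, since then $(\bigcup_{i\in\emptyset}A_i)^c=\mathfrak{X}$ indeed contains no non-empty member of $\mathcal{P}$), verifying that the union of a chain is a legitimate upper bound inside $\mathfrak{M}$ (immediate, as disjointness is a pairwise condition), and making sure we work with $\mathcal{P}\setminus\{\emptyset\}$ rather than $\mathcal{P}$ so that "disjoint elements" is not spoiled by $\emptyset$. The role of hypothesis (\ref{eq-zorn}) is solely to upgrade the abstract maximal antichain to a \emph{countable} one, which is what the lemma asserts.
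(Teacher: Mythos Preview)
Your proof is correct and follows essentially the same Zorn's lemma argument as the paper: take a maximal disjoint subcollection, use maximality for the ``contains no non-empty $A\in\mathcal{P}$'' clause, and invoke (\ref{eq-zorn}) for countability. The only cosmetic difference is that the paper builds countability into the definition of the poset (so (\ref{eq-zorn}) is used when checking that the union of a chain stays in the poset), whereas you apply (\ref{eq-zorn}) only at the end to the maximal family; both work.
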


In the context of Boolean algebras condition (\ref{eq-zorn}) is called ,,countable chain condition`` -- see \cite[Section 14]{Halmos2}. Lemma \ref{lemma-zorn} is a straightforward application of Zorn's lemma. For the reader's convenience, we provide a proof.

\begin{proof}[\normalfont\bfseries Proof of Lemma \ref{lemma-zorn}]
 Define \[ M:=\{ \mathcal{E}\subseteq \mathcal{P}\;|\;\mathcal{E}\;\mbox{is countable and has disjoint elements}\}. \]
The set $M\neq \emptyset$ is partially ordered by inclusion. Let $K\subseteq M$ be a totally ordered subset of $M$ and define
$\mathcal{E}_{K}:=\bigcup_{\mathcal{E}\in K}\mathcal{E}$. Since $K$ is totally ordered, the elements of $\mathcal{E}_{K}$ have to be disjoint. Therefore condition (\ref{eq-zorn}) implies that $\mathcal{E}_{K}$ is countable. Hence $\mathcal{E}_{K}\in M$ and $\mathcal{E}_{K}$ is an upper bound of $K$. Thus we have shown that every totally ordered subset of $M$ has an upper bound.\\
Consequently,  Zorn's lemma yields the existence of a maximal element $\mathcal{E}=\{A_{i}\,|\,i\in I\}$ in $M$. Suppose $(\bigcup_{i\in I}A_{i})^{c}$ contained a nonempty set $A\in\mathcal{P}$. Then $\mathcal{E}\cup\{A\}\in M$ and $\mathcal{E}\subsetneq \mathcal{E}\cup\{A\}$ would contradict the maximality of $\mathcal{E}$. Therefore such $A$ cannot exist.\end{proof}

\begin{prop} \label{prop-zorn1} Let $\Delta\in\mathcal{S}\otimes\mathcal{S}$ and let $\pi:\Om \to C(S)$ be a constructive cr-set, then $\mathcal{P}:=\{ A\in\mathcal{S}\,|\, T_{\pi}(A)>0 \}$ satisfies  (\ref{eq-zorn}).
 \end{prop}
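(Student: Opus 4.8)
The plan is to reduce everything to the selection theorem. Since $\Delta\in\mathcal{S}\otimes\mathcal{S}$ and $\pi$ is constructive, Theorem \ref{theo-constructive1} provides random variables $X_{n}:(\Om,\mathcal{F})\to(S,\mathcal{S})$, $n\in\N$, with $\pi=1_{\{\pi\neq\emptyset\}}\{X_{1},X_{2},\ldots\}$; in particular $\pi(\om)\subseteq\{X_{n}(\om)\,|\,n\in\N\}$ for every $\om\in\Om$ (trivially when $\pi(\om)=\emptyset$). Hence $\{\pi\cap A\neq\emptyset\}\subseteq\bigcup_{n\in\N}\{X_{n}\in A\}$, and by $\sigma$-subadditivity of $P$ we obtain
\[ T_{\pi}(A)\le\sum_{n\in\N}P(X_{n}\in A)\qquad\mbox{for all }A\in\mathcal{S}. \]

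Now let $\mathcal{E}=\{A_{i}\,|\,i\in I\}\subseteq\mathcal{P}$ be any subcollection with pairwise disjoint elements. For each $i\in I$ we have $T_{\pi}(A_{i})>0$, so the displayed inequality forces the existence of an index $n(i)\in\N$ with $P(X_{n(i)}\in A_{i})>0$. Set $I_{n}:=\{i\in I\,|\,n(i)=n\}$, so that $I=\bigcup_{n\in\N}I_{n}$. For a fixed $n$ the sets $A_{i}$, $i\in I_{n}$, are pairwise disjoint, whence
\[ \sum_{i\in I_{n}}P(X_{n}\in A_{i})=P\!\Big(X_{n}\in\bigcup_{i\in I_{n}}A_{i}\Big)\le 1. \]
A family of strictly positive reals with finite sum is at most countable (for every $m\in\N$ at most $m$ indices $i$ can satisfy $P(X_{n}\in A_{i})>1/m$, and $I_{n}$ is the union over $m$ of these finite sets), so each $I_{n}$ is countable and therefore so is $I=\bigcup_{n}I_{n}$. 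This is exactly condition (\ref{eq-zorn}).

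\textbf{Main obstacle.} There is essentially one point requiring care: to invoke ``uncountably many positive summands yield an infinite sum'' one needs a genuinely \emph{finite} bound on the relevant sum, and this is precisely where the disjointness of the $A_{i}$ is used, via $\sum_{i\in I_{n}}P(X_{n}\in A_{i})\le 1$. Everything else is bookkeeping. (One could instead start from Proposition \ref{prop-constructive2}, writing $\pi$ as a disjoint union of finite cr-sets and applying Theorem \ref{theoA1} to each summand, but this merely reconstructs the same countable family of random variables $X_{n}$, so using Theorem \ref{theo-constructive1} directly is the shortest route.)
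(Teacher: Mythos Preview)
Your argument is correct and is essentially the paper's own proof: both invoke Theorem \ref{theo-constructive1} to obtain the selectors $X_{n}$, use the inclusion $\{\pi\cap A\neq\emptyset\}\subseteq\bigcup_{n}\{X_{n}\in A\}$, and then exploit disjointness to bound the number of $A_{i}$ with $P(X_{n}\in A_{i})>1/m$. One cosmetic point: the displayed identity $\sum_{i\in I_{n}}P(X_{n}\in A_{i})=P\big(X_{n}\in\bigcup_{i\in I_{n}}A_{i}\big)$ is written before you know $I_{n}$ is countable (so the union need not lie in $\mathcal{S}$), but your parenthetical remark---at most $m$ indices can satisfy $P(X_{n}\in A_{i})>1/m$, argued via finite subfamilies---is the rigorous version and matches the paper exactly.
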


\begin{proof}[\normalfont\bfseries Proof] By Theorem \ref{theo-constructive1}, there exist random variables $X_{1},X_{2},\ldots$ with $\pi=1_{\{\pi\neq \emptyset\}}\{X_{1},X_{2},\ldots \}$. Hence  $\{\pi\cap A\neq\emptyset\}\subseteq \bigcup_{n\in\N}\{ X_{n}\in A\}$ for all $A\in\mathcal{S}$ and for any $\mathcal{E}\subseteq \mathcal{P}$ we conclude
\begin{eqnarray*}  \mathcal{E} & = & \bigcup_{n\in\N}\left\{A\in\mathcal{E}\,|\, P(X_{n}\in A)>0\right\}\\
 			       & = & \bigcup_{n\in\N}\bigcup_{m\in\N}\left\{A\in\mathcal{E}\,|\,P(X_{n}\in A)>1/m\right\}.
\end{eqnarray*} If the elements of $\mathcal{E}$ are disjoint, the sets $\{A\in\mathcal{E}\,|\,P(X_{n}\in A)>1/m\}$ are finite for all $n,m\in\N$ and, consequently, $\mathcal{E}$ is countable.\end{proof}

\begin{proof}[\normalfont\bfseries Proof of Theorem \ref{theo-decomp}]
 First, we prove the existence of $F$. By Proposition \ref{prop-zorn1}, the collection $\mathcal{P}:=\{ A\in\mathcal{S}\,|\, T_{\pi}(A)>0\:\mbox{and}\:P(N_{A}(\pi)=\infty)=0\}$ satisfies (\ref{eq-zorn}). Applying Lemma \ref{lemma-zorn}, we therefore get countably many disjoint $A_{i}\in\mathcal{P}$, $i\in I$, such that \begin{equation}P(\pi\cap A\neq \emptyset)=0\;\;\mbox{or}\;\;P(N_{A}(\pi)=\infty)>0\;\;\mbox{for all}\;\;A\subseteq (\bigcup_{i\in I}A_{i})^{c}.\label{eq-decomp}\end{equation} Define $F:=\bigcup_{i\in I}A_{i}\in\mathcal{S}$ and $E:=\bigcap_{i\in I}\{N_{A_{i}}(\pi)<\infty\}\in\mathcal{F}$. Since $A_{i}\in\mathcal{P}$ for all $i\in I$, we conclude $P(E)=1$. Hence $1_{E}\pi \cap F$ is a $\sigma$-finite version of $\pi\cap F$. Furthermore, it follows from (\ref{eq-decomp}) that $\pi\cap F^{c}$ satisfies condition (ii).
 
Finally, let $F_{1},F_{2}\in\mathcal{S}$ satisfy (i) and (ii). Since $\pi\cap F_{1}$ possesses a $\sigma$-finite version, there exist $A_{n}\in\mathcal{S}$, $n\in\N$, with $P(N_{A_{n}}(\pi\cap F_{1})=\infty)=0$ and $\bigcup_{n\in\N}A_{n}=S$. Now, we deduce from $N_{A_{n}\cap F_{1}}(\pi\cap F_{2}^{c})\leq N_{A_{n}}(\pi\cap F_{1})$ that $P(N_{A_{n}\cap F_{1}}(\pi\cap F_{2}^{c})=\infty)=0$ holds. Since $F_{2}$ satisfies (ii), it follows further that $P((\pi\cap F_{2}^{c})\cap (A_{n}\cap F_{1})\neq\emptyset)=0$. Therefore $S=\bigcup_{n\in\N}A_{n}$ yields
\[ P(\pi\cap (F_{1}\setminus F_{2})\neq \emptyset)=P(\bigcup_{n\in\N}\{(\pi\cap F_{2}^{c})\cap (A_{n}\cap F_{1})\neq \emptyset\})=0.\] The same argument shows that $P(\pi\cap(F_{2}\setminus F_{1})\neq\emptyset)=0$ and we conclude $P(\pi\cap(F_{1}\vartriangle F_{2})\neq\emptyset)=0$. 
\end{proof}

\begin{prop} \label{prop-zorn2}Let $\Delta\in\mathcal{S}\otimes\mathcal{S}$ and let $\pi:\Om \to C(S)$ be a constructive cr-set with independent increments. Then there exist finite measures $\mu_{n}:\mathcal{S}\to [0,\infty)$, $n\in\N$, with $-\log(1-T_{\pi})=\sum_{n\in\N}\mu_{n}$.
\end{prop}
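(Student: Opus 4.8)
The plan is to interpret $-\log(1-T_{\pi})$ as a measure and then decompose that measure. So first I would set $\Lambda(A):=-\log P(\pi\cap A=\emptyset)=-\log(1-T_{\pi}(A))\in[0,\infty]$ and show that $\Lambda$ is a measure on $(S,\mathcal{S})$. For disjoint $A,B\in\mathcal{S}$, independent increments make $\{N_{A}=0\}$ and $\{N_{B}=0\}$ independent, and $N_{A\cup B}=N_{A}+N_{B}$ gives $\{N_{A\cup B}=0\}=\{N_{A}=0\}\cap\{N_{B}=0\}$; thus $1-T_{\pi}(A\cup B)=(1-T_{\pi}(A))(1-T_{\pi}(B))$, i.e. $\Lambda$ is finitely additive, and clearly $\Lambda(\emptyset)=0$. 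Since $T_{\pi}$ is continuous from below and $t\mapsto-\log(1-t)$ is continuous and nondecreasing on $[0,1]$ (with value $+\infty$ at $t=1$), $\Lambda$ is continuous from below; finite additivity together with continuity from below gives $\sigma$-additivity. Hence $\Lambda$ is a measure, and the problem reduces to writing $\Lambda$ as a countable sum of finite measures.

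Next I would use constructiveness to produce a finite dominating measure for $\Lambda$. By Theorem \ref{theo-constructive1} we may write $\pi=1_{\{\pi\neq\emptyset\}}\{X_{1},X_{2},\ldots\}$ with random variables $X_{n}:(\Om,\mathcal{F})\to(S,\mathcal{S})$. Then $\{\pi\cap A\neq\emptyset\}\subseteq\bigcup_{n\in\N}\{X_{n}\in A\}$, so $T_{\pi}\leq\sum_{n\in\N}\nu_{n}$, where $\nu_{n}$ is the law of $X_{n}$. With the finite measure $\nu:=\sum_{n\in\N}2^{-n}\nu_{n}$ one has $\nu(A)=0\Rightarrow T_{\pi}(A)=0\Rightarrow\Lambda(A)=0$, that is, $\Lambda\ll\nu$.

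Then I would carry out the decomposition. The collection $\mathcal{P}^{*}:=\{A\in\mathcal{S}\,|\,0<\Lambda(A)<\infty\}$ is contained in $\{A\in\mathcal{S}\,|\,T_{\pi}(A)>0\}$, hence satisfies condition (\ref{eq-zorn}) by Proposition \ref{prop-zorn1}; Lemma \ref{lemma-zorn} then supplies a countable disjoint family $\{A_{i}\,|\,i\in I\}\subseteq\mathcal{P}^{*}$ whose union $F$ has the property that every measurable $B\subseteq G:=F^{c}$ satisfies $\Lambda(B)\in\{0,\infty\}$ (should $\mathcal{P}^{*}$ be empty, take $I=\emptyset$ and $G=S$). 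On $F$ we already have $\Lambda(\,\cdot\cap F)=\sum_{i\in I}\Lambda(\,\cdot\cap A_{i})$, a countable sum of finite measures since each $\Lambda(A_{i})<\infty$. On $G$ I would invoke $\Lambda\ll\nu$: because $\{B\in\mathcal{S}\,|\,B\subseteq G,\ \Lambda(B)=0\}$ is closed under countable unions and $\nu$ is finite, it has a member $G_{0}$ of maximal $\nu$-measure, whence $\Lambda(\,\cdot\cap G_{0})=0$; and for measurable $B\subseteq G\setminus G_{0}$ one gets $\Lambda(B)=\infty$ if $\nu(B)>0$ (it cannot be $\Lambda$-null by maximality of $G_{0}$, and it cannot be finite positive since $B\subseteq G$) and $\Lambda(B)=0$ if $\nu(B)=0$, so $\Lambda(\,\cdot\cap(G\setminus G_{0}))=\sum_{m\in\N}\nu(\,\cdot\cap(G\setminus G_{0}))$. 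Adding the three contributions writes $-\log(1-T_{\pi})=\Lambda$ as a countable sum of finite measures, which after reindexing by $\N$ (padding with zero measures if needed) gives the claimed $\mu_{n}$.

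The step I expect to be the real obstacle is the part on $G$, where $\Lambda$ takes only the values $0$ and $\infty$: such a measure need not be a countable sum of finite measures at all (the counting measure on an uncountable set is a counterexample), so one cannot argue from $\sigma$-additivity alone. What makes it work is exactly the finite dominating measure $\nu$, and the only reason $\nu$ is available is constructiveness, through the measurable selection of Theorem \ref{theo-constructive1}; once $\nu$ is at hand, $\Lambda$ restricted to $G\setminus G_{0}$ is forced to be $\infty$ times a finite measure. By contrast, checking that $\Lambda$ is a measure and running the maximality bookkeeping (Lemma \ref{lemma-zorn}, Proposition \ref{prop-zorn1}) should be routine.
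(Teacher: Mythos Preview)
Your proof is correct and follows the same overall architecture as the paper's: show that $\Lambda=-\log(1-T_{\pi})$ is a measure via independent increments and continuity from below, use Proposition~\ref{prop-zorn1} and Lemma~\ref{lemma-zorn} to peel off a countable disjoint family $\{A_{i}\}$ on which $\Lambda$ is finite, and then use the measurable selection Theorem~\ref{theo-constructive1} to handle the residual set $G$ on which $\Lambda$ only takes the values $0$ and $\infty$.

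The one point of genuine difference is how the $\{0,\infty\}$-valued part is expressed as a sum of finite measures. The paper defines $\mu_{n,k}(A):=P(X_{k}\in A,\,\pi\neq\emptyset)$ with a deliberately redundant index $n$, so that summing over $(n,k)\in\N^{2}$ automatically forces the value $\infty$ whenever some $\mu_{1,k}(A)>0$; this gives $\mu(A)=\sum_{(n,k)}\mu_{n,k}(A)$ for every $A$ with $\mu(A)\in\{0,\infty\}$ in one line. You instead package the selectors into a single finite dominating measure $\nu=\sum 2^{-n}\nu_{n}$, extract a $\nu$-maximal $\Lambda$-null set $G_{0}\subseteq G$, and identify $\Lambda(\,\cdot\cap(G\setminus G_{0}))$ with the countable sum $\sum_{m}\nu(\,\cdot\cap(G\setminus G_{0}))$. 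Your route makes the role of constructiveness more transparent (it literally supplies a finite measure dominating $\Lambda$), at the cost of the extra maximal-null-set step; the paper's double-indexing trick is slicker but more opaque. Both are equally valid, and your closing remark correctly identifies why the $\{0,\infty\}$ part is the crux.
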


\begin{proof}[\normalfont\bfseries Proof] Since $\pi$ has independent increments, we obtain \begin{eqnarray*}-\log\left(1-T_{\pi}(A\cup B)\right) & = &-\log\left(P(\{\pi\cap A=\emptyset\}\cap\{\pi\cap  B=\emptyset\})\right)\\ & = & -\log\left(P(\pi\cap A=\emptyset)P(\pi\cap B=\emptyset)\right)\\ & = & -\log\left(1-T_{\pi}(A)\right)-\log\left(1-T_{\pi}(B)\right), \end{eqnarray*}
if $A,B\in\mathcal{S}$ are disjoint. From this and from continuity from below of $T_{\pi}$ we deduce that $\mu:=-\log(1-T_{\pi})$ is a measure on $(S,\mathcal{S})$. Note that 
\begin{equation}\label{eq-zorn2} \mu(A)=0 \Leftrightarrow T_{\pi}(A)=0 \;\;\;\;\mbox{and}\;\;\;\;\mu(A)=\infty \Leftrightarrow T_{\pi}(A)=1 
\end{equation}
holds for all $A\in\mathcal{S}$.
Applying Theorem \ref{theo-constructive1}, there exist random variables $X_{1},X_{2},\ldots$  with $\pi=1_{\{\pi\neq \emptyset\}}\{X_{1},X_{2},\ldots \}$. Therefore we are able to define the finite measures $\mu_{n,k}:\mathcal{S}\to [0,\infty)$, $\mu_{n,k}(A):=P(X_{k}\in A,\pi\neq \emptyset)$, $n,k\in\N$. Note that $\mu_{n,k}$ does not really depend on $n$. Nevertheless, we define it that way. From (\ref{eq-zorn2}) and 
\[ \{X_{l}\in A,\pi\neq\emptyset\}\subseteq \{\pi\cap A\neq \emptyset\}\subseteq \bigcup_{k\in\N}\{X_{k}\in A,\pi\neq \emptyset\}\] for all $l\in\N$, $A\in\mathcal{S}$ we conclude \begin{equation}\label{eq-zorn3} \mu(A)=\sum_{(n,k)\in\N^{2}}\mu_{n,k}(A)\;\:\;\mbox{for all}\;\: A\in\mathcal{S}\;\: \mbox{with}\;\: \mu(A)\in\{0,\infty\}. \end{equation}
Now define $\mathcal{P}:=\{A\in\mathcal{S}\,|\,0<T_{\pi}(A)<1\}$. The collection $\mathcal{P}$ satisfies (\ref{eq-zorn}) by Proposition \ref{prop-zorn1}. Note that (\ref{eq-zorn2}) implies $\mathcal{P}=\{A\in\mathcal{S}\,|\,0<\mu(A)<\infty\}$.  Finally, we are going to consider the following two cases. 

If $\mathcal{P}=\emptyset$, we obtain $\mu(A)\in\{0,\infty\}$ for all $A\in\mathcal{S}$ and (\ref{eq-zorn3}) yields the statement of Proposition \ref{prop-zorn2}. 

In the case of $\mathcal{P}\neq\emptyset$, we apply Lemma \ref{lemma-zorn} and obtain a countable family of disjoint sets $A_{i} \in\mathcal{P}$, $i\in I$, such that $\mu(A)\in\{0,\infty\}$ for all $A\in \mathcal{S}$ with $A\subseteq (\bigcup_{i\in I}A_{i})^{c}$. Define $D:=(\bigcup_{i\in I}A_{i})^{c}$ and $\mu_{i}(A):=\mu(A\cap A_{i})$ for all $i\in I$. The measures $\mu_{i}$ are finite and it follows that\[ \mu(A)=\sum_{i\in I}\mu_{i}(A) + \mu(A\cap D)\stackrel{\mbox{\footnotesize (\ref{eq-zorn3})}}{=}\sum_{i\in I}\mu_{i}(A)+ \sum_{(n,k)\in\N^{2}}\mu_{n,k}(A\cap D)\] holds for all $A\in\mathcal{S}$. Therefore $\mu$ is the sum of countably many finite measures and Proposition \ref{prop-zorn2} is proven. \end{proof}

\begin{proof}[\normalfont\bfseries Proof of Theorem \ref{theo-inde-incre}] By Proposition \ref{prop-zorn2}, we have \[ P(\pi\cap A=\emptyset)=e^{-\sum_{n\in\N}\mu_{n}(A)}\] for all $A\in\mathcal{S}$. Under the assumptions of Theorem \ref{theo-inde-incre}, we have $\mu_{n}(\{x\})=0$ for all $x\in S$. Therefore the existence theorem in \cite{kingman} together with Proposition \ref{prop-central} yield Theorem \ref{theo-inde-incre}.
\end{proof}

\bigskip

\noindent {\bf Discussion of Example \ref{example2}.}

\smallskip

\noindent First, remember that Lebesgue's density theorem \cite[Subsection 7.12]{rudin} states  
\[ \lim_{h\to +0} \frac{\lambda(A\cap (z-h,z+h))}{2h}= 1 \]
for every Borel-set $A\subseteq \R$ and for $\lambda$-almost all $z\in A$. By comparing to the harmonic series, we deduce from this \begin{equation}\label{eqex1} \sum_{n=1}^{\infty}\lambda\left(A\cap (z-\frac{1}{n},z+\frac{1}{n})\right)=\infty\end{equation}
for $\lambda$-almost all $z\in A$. Secondly, Fubini's theorem and independence yield
\begin{equation}\label{eqex2} P(N_{A}(\tau)=\infty)=\int_{\R} P(N_{A-z}(\pi)=\infty)\,dP_{Z}(z)\end{equation}
with $A-z:=\{a-z\,|\,a\in A\}$ and $P_{Z}$ denoting the law of $Z$. Since $\pi$ is a Poisson process with intensity $\mu$, we have
\begin{equation}\label{eqex3} P(N_{A-z}(\pi)=\infty)=\left\{\begin{array}{cc} 1 & \mbox{for}\;\mu(A-z)=\infty, \\  0 & \mbox{for}\;\mu(A-z)<\infty. \end{array}\right.\end{equation}
If $P_{Z}$ is equivalent to $\lambda$, then (\ref{eqex1}) holds also for $P_{Z}$-almost all $z\in A$. Together with (\ref{eqex2}),(\ref{eqex3}) and $\mu(A-z)=\sum_{n\in\N}\lambda(A\cap (z-\frac{1}{n},z+\frac{1}{n}))$, this yields
\[ P(N_{A}(\tau)=\infty)\geq P_{Z}(A)>0\] for all Borel-sets $A\subset \R$ with $\lambda(A)>0$.
Finally, it is easy to check that $P(N_{A}(\tau)=\infty)<1$ for bounded $A$.

%%%%%%%%%%%%%%%%%%%%%%%%%%%%%%%%%%%%%%%%%%%%%%%%%%%%%%%%%%%%%%%%%%%%%%%%%%%%%%%%%%%%%%%%%%%%%%%%
%%%%%%%%%%%%%%%%%%%%%%%%%%%%%%%%%%%%%%%%%%%%%%%%%%%%%%%%%%%%%%%%%%%%%%%%%%%%%%%%%%%%%%%%%%%%%%%%
%%%%%%%%%%%%%%%%%%%%%%%%%%%%%%%%%%%%%%%%%%%%%%%%%%%%%%%%%%%%%%%%%%%%%%%%%%%%%%%%%%%%%%%%%%%%%%%%
%%%%%%%%%%%%%%%%%%%%%%%%%%%%%%%%%%%%%%%%%%%%%%%%%%%%%%%%%%%%%%%%%%%%%%%%%%%%%%%%%%%%%%%%%%%%%%%%

%%%%%%%%%%%%%%%%%%%%%%%%%%%%%%%%%%%%%%%%%%%%%%%%%%%%%%%%%%%%%%%%%%%%%%%%%%%%%%%%%%%%%%%%%%%%%%%%
%%%%%%%%%%%%%%%%%%%%%%%%%%%%%%%%%%%%%%%%%%%%%%%%%%%%%%%%%%%%%%%%%%%%%%%%%%%%%%%%%%%%%%%%%%%%%%%%
%%%%%%%%%%%%%%%%%%%%%%%%%%%%%%%%%%%%%%%%%%%%%%%%%%%%%%%%%%%%%%%%%%%%%%%%%%%%%%%%%%%%%%%%%%%%%%%%
%%%%%%%%%%%%%%%%%%%%%%%%%%%%%%%%%%%%%%%%%%%%%%%%%%%%%%%%%%%%%%%%%%%%%%%%%%%%%%%%%%%%%%%%%%%%%%%%

\begin{acknowledgement}
   I thank Martin M\"ohle for his advice and for fruitful discussions. Furthermore, i would like to thank the referee for his careful reading of the manuscript and for his valuable comments.
\end{acknowledgement}

%%%%%%%%%%%%%%%%%%%%%%%%%%%%%%%%%%%%%%%%%%%%%%%%%%%%%%%%%%%%%%%%%%%%%%%%%%%%%%%%%%%%%%%%%%%%%%%%
%%%%%%%%%%%%%%%%%%%%%%%%%%%%%%%%%%%%%%%%%%%%%%%%%%%%%%%%%%%%%%%%%%%%%%%%%%%%%%%%%%%%%%%%%%%%%%%%
%%%%%%%%%%%%%%%%%%%%%%%%%%%%%%%%%%%%%%%%%%%%%%%%%%%%%%%%%%%%%%%%%%%%%%%%%%%%%%%%%%%%%%%%%%%%%%%%
%%%%%%%%%%%%%%%%%%%%%%%%%%%%%%%%%%%%%%%%%%%%%%%%%%%%%%%%%%%%%%%%%%%%%%%%%%%%%%%%%%%%%%%%%%%%%%%%

\end{document}